\theoremstyle{plain}
\newtheorem{theorem}{Theorem}[section]
\newtheorem{c-theorem}{Construction theorem}[section]
\newtheorem{lemma}[theorem]{Lemma}
\newtheorem{proposition}[theorem]{Proposition}
\newtheorem{corollary}[theorem]{Corollary}
\theoremstyle{definition}
\newtheorem{definition}[theorem]{Definition}
\theoremstyle{remark}
\newtheorem{remark}[theorem]{Remark}
\newcommand{\ncom}{\newcommand}
\ncom{\bq}{\begin{equation}}
\ncom{\eq}{\end{equation}}
\ncom{\beqn}{\begin{eqnarray*}}
\ncom{\eeqn}{\end{eqnarray*}}
\ncom{\beq}{\begin{eqnarray}}
\ncom{\eeq}{\end{eqnarray}}
\ncom{\nno}{\nonumber}
\ncom{\rar}{\rightarrow}
\ncom{\Rar}{\Rightarrow}
\ncom{\noin}{\noindent}
\ncom{\bc}{\begin{centre}}
\ncom{\ec}{\end{centre}}
\ncom{\sz}{\scriptsize}
\ncom{\rf}{\ref}
\ncom{\sgm}{\sigma}
\ncom{\Sgm}{\Sigma}
\ncom{\dt}{\delta}
\ncom{\Dt}{Delta}
\ncom{\lmd}{\lambda}
\ncom{\Lmd}{\Lambda}
\ncom{\eps}{\epsilon}
\ncom{\pcc}{\stackrel{P}{>}}
\ncom{\dist}{{\rm\,dist}}
\ncom{\im}{{\rm Im\,}}
\ncom{\sgn}{{\rm sgn\,}}
\ncom{\ba}{\begin{array}}
\ncom{\ea}{\end{array}}
\ncom{\eop}{\hfill{{\rule{2.5mm}{2.5mm}}}}
\ncom{\eof}{\hfill{{\rule{1.5mm}{1.5mm}}}}
\ncom{\hone}{\mbox{\hspace{1em}}}
\ncom{\htwo}{\mbox{\hspace{2em}}}
\ncom{\hthree}{\mbox{\hspace{3em}}}
\ncom{\hfour}{\mbox{\hspace{4em}}}
\ncom{\hsev}{\mbox{\hspace{7em}}}
\ncom{\vone}{\vskip 2ex}
\ncom{\vtwo}{\vskip 4ex}
\ncom{\vonee}{\vskip 1.5ex}
\ncom{\vthree}{\vskip 6ex}
\ncom{\vfour}{\vspace*{8ex}}
\ncom{\norm}{\|\;\;\|}
\ncom{\integ}[4]{\int_{#1}^{#2}\,{#3}\,d{#4}}
\ncom{\inp}[2]{\langle{#1},\,{#2} \rangle}
\ncom{\Inp}[2]{\Langle{#1},\,{#2} \Langle}
\ncom{\vspan}[1]{{{\rm\,span}\#1 \}}}
\ncom{\dm}[1]{\displaystyle {#1}}
\begin{document}
\title[Dirichlet-type spaces]{Dirichlet-type spaces of the unit bidisc and toral completely hyperexpansive operators}

\author[Santu Bera]{Santu Bera}

\address{Department of Mathematics and Statistics\\
Indian Institute of Technology Kanpur, 208016, India}

\email{santu20@iitk.ac.in, santuddd12@gmail.com}

\thanks{The author is supported through the PMRF Scheme (2301352).}

\keywords{toral completely hyperexpansive operator, Dirichlet-type spaces, Gleason problem, Hardy spaces, superharmonic function, wandering subspace}

\subjclass[2020]{Primary 47A13, 32A10, 47B38; Secondary 31C05, 46E20  }

\begin{abstract}
We discuss a notion, originally introduced by Aleman in one variable, of Dirichlet-type space $\mathcal D(\mu_1,\mu_2)$ on the unit bidisc $\mathbb D^2,$ with superharmonic weights related to finite positive Borel measures $\mu_1,\mu_2$ on $\overline{\mathbb D}.$ 
The multiplication operators $\mathscr M_{z_1}$ and $\mathscr M_{z_2}$ by the coordinate functions $z_1$ and $z_2,$ respectively, are bounded on $\mathcal D(\mu_1,\mu_2)$ and the set of polynomials is dense in $\mathcal D(\mu_1,\mu_2).$ We show that the commuting pair $\mathscr M_{z}=(\mathscr M_{z_1},\mathscr M_{z_2})$ is a cyclic analytic toral completely hyperexpansive $2$-tuple on $\mathcal D(\mu_1,\mu_2).$ Unlike the one variable case, not all cyclic analytic toral completely hyperexpansive  pairs arise as multiplication $2$-tuple $\mathscr M_z$ on these spaces. In particular, we establish that a cyclic analytic toral completely hyperexpansive operator $2$-tuple $T=(T_1,T_2)$ satisfying $I-T^*_1 T_1-T^*_2T_2+T^*_1T^*_2T_1T_2=0$ and having a cyclic vector $f_0$ is unitarily equivalent to $\mathscr{M}_z$ on $\mathcal{D}(\mu_1, \mu_2)$ for some finite positive Borel measures $\mu_1$ and $\mu_2$ on $\overline{\mathbb{D}}$ if and only if $\ker T^*$, spanned by $f_0$, is a wandering subspace for $T$.

\end{abstract}

\maketitle

\section{Introduction \& Preliminaries}

 Let $\mathbb D$ and $\mathbb T$ denote the open unit disc and the unit circle in the complex plane $\mathbb C$. For a non-empty subset $S$ of $\mathbb C,$ $M_+(S)$ denotes the set of finite positive Borel measures on $S.$ Let $\mathcal H$ denote a complex separable Hilbert space and $\mathcal{B(H)}$ be the $C^*$-algebra  of bounded linear operators on $\mathcal H.$ For an operator $S\in \mathcal B(\mathcal H),$ $S^*$ denotes the Hilbert space adjoint of $S.$ A pair  
$T=(T_1,T_2)$ is called a {\it commuting pair} on $\mathcal H$ if $T_1, T_2 \in \mathcal{B(H)} $ and $T_i T_j=T_j T_i$ for $1\leq i,j \leq 2.$ 
 Following \cite[p.~56]{EL2018}, we say that a commuting pair $T=(T_1,T_2)$ on $\mathcal H$ is {\it analytic} if 
 \beqn
\bigcap_{k=0}^{\infty} \sum_{\substack{\alpha_1, \alpha_2 \geq 0\\ \alpha_1 +\alpha_2=k}} T_1^{\alpha_1}T_2^{\alpha_2} \mathcal H=\{0\}.
\eeqn
A commuting pair $T=(T_1,T_2)$ is called {\it cyclic} with cyclic vector $f_0\in \mathcal H$ if the closed linear span of $\{ T_1^{\alpha_1}T_2^{\alpha_2}f_0,\alpha_1,\alpha_2\in \mathbb Z_+\}$ equals to $\mathcal H.$ Let $\alpha, \beta \in \mathbb Z_+^2,$ we say $\alpha \leq \beta$ if $\alpha_i \leq \beta_i$ for  $i=1,2.$ For $\alpha\leq \beta$  denote
$\binom{\alpha}{\beta}=\binom{\alpha_1}{\beta_1}\binom{\alpha_2}{\beta_2}.$ A commuting pair $T=(T_1,T_2)$ on $\mathcal H$ is said to be {\it toral completely hyperexpansive }  (refer to \cite[Definition~1]{AS1999}, cf. \cite{A1990, A1996}) if for each $\alpha\in \mathbb Z_+^2\setminus \{0\},$ 
 \begin{align}\label{beta-alpha}  
 \beta_\alpha(T):= \sum_{\substack{\beta\in  \mathbb Z_+^2 \\ 0\leq \beta \leq \alpha}} (-1)^{|\beta|} \binom{\alpha}{\beta} T^{*\beta}T^\beta \leq 0. 
\end{align}
Using binomial expansion it is easy to check that for each $\alpha \in \mathbb Z^2_+,$ 
\beq\label{beta-e-j}
\beta_{\alpha+\epsilon_j}(T)=\beta_\alpha(T)-T^*_j\beta_\alpha(T)T_j, \quad  j=1,2,
\eeq
 where $\epsilon_1=(1,0)$ and $\epsilon_2=(0,1).$ 
For a commuting pair $T=(T_1,T_2)$ on $\mathcal H$ we define the defect operator as
\beq\label{defect-operator}
\beta_{(1,1)}(T)=I-T^*_1 T_1-T^*_2T_2+T^*_1T^*_2T_1T_2.
\eeq
From the identity \eqref{beta-e-j} it is clear that whenever the defect operator of $T$ is zero i.e. $\beta_{(1,1)}(T)=0,$ $\beta_\alpha(T)=0$ for all $\alpha \in \mathbb Z^2_+$ with $\alpha_1\alpha_2\neq 0.$  It is evident that whenever two completely hyperexpansive operators $T_1$ and $T_2$  commute and have zero defect operator, the commuting $2$-tuple  $T=(T_1,T_2)$ is toral completely hyperexpansive.
Not every toral completely hyperexpansive $2$-tuple has zero defect operator. Indeed, if we take $d\nu$ to be Lebesgue area measure on $[0,1]^2$ in \cite[Eq.~H]{AS1999} then the defect operator is non zero. In particular,
\beqn
&&\|e_0\|^2-\|T_1e_0\|^2-\|T_2e_0\|^2+\|T_1T_2e_0\|^2\\
&=&1-(1+b_1+\frac{1}{2})-(1+b_2+\frac{1}{2})+(1+b_1+b_2+\frac{3}{4})=-\frac{1}{4}< 0.
\eeqn

Richter \cite{R1991} introduced the notion of {\it Dirichlet-type space}  on the unit disc $\mathbb D$ with harmonic weight and proved that these spaces are model spaces for cyclic analytic $2$-isometries (see \cite[Theorem~5.1]{R1991}). Later,  in \cite[Chapter~IV]{A1993} Aleman generalized this notion by considering superharmonic weights as follows:
Let $\mu$ be a finite positive Borel measure on $\overline{\mathbb D}.$ For a holomorphic function $f$ on $\mathbb D$ consider 
\beq\label{diri-int-one-variable}
\mathcal D_\mu(f)=\int_{\mathbb D}|f'(z)|^2U_\mu(z)dA(z),
\eeq 
where $dA$ denotes the normalized Lebesgue area measure on $\mathbb D$ and $U_\mu$ is the superharmonic function on $\mathbb D$ given by
\beqn
U_{\mu}(w)=\int_{\mathbb D}\log \left |\frac{1-\overline{\zeta}w}{w-\zeta}\right |^2\frac{d\mu(\zeta)}{1-|\zeta|^2}+\int_{\mathbb T}\frac{1-|w|^2}{|\zeta-w|^2}d\mu(\zeta),\; w\in \mathbb D.
\eeqn
Note that any positive superharmonic function on $\mathbb D$ is of this form (see \cite[Theorem ~4.5.1]{R1995}). The Dirichlet type space $\mathcal D(\mu)$  is the collection of holomorphic function $f$ on $\mathbb D$ such that $\mathcal D_\mu(f)<\infty.$ These spaces are subspaces of the Hardy space $H^2(\mathbb D).$ Concerning the following norm 
\beq \label{norm-d-mu}
\|f\|^2_{\mathcal D(\mu)}=\|f\|^2_{H^2(\mathbb D)}+\mathcal D_\mu(f), \quad f\in \mathcal D(\mu),
\eeq
$\mathcal D(\mu)$ is a Hilbert space and the multiplication operator $\mathscr M_z$ by the coordinate function $z$ is a cyclic analytic completely hyperexpansive (see \cite[Eq~D]{A1996} and \cite[Theorem~1.10(i), p~76]{A1993}. Moreover, any cyclic analytic completely hyperexpansive operator is unitarily equivalent to $\mathscr M_z$ on $\mathcal D(\mu)$ for some $\mu\in M_+(\overline{\mathbb  D})$ (see \cite[Theorem~2.5, p.~79]{A1993}). For further details on these spaces, please refer to \cite{A1993, EKKMR2016, GNS2018, S2025}.

In \cite{BCG2024} (see also \cite{BGV2024}) a notion of Dirichlet-type spaces on unit bidisc with harmonic weights has been introduced and observed that the multiplication tuple $\mathscr M_z=(\mathscr M_{z_1},\mathscr M_{z_2})$ is a toral $2$-isometry, i.e. $\beta_\alpha(\mathscr M_z)=0$ for each $\alpha \in \{(2,0),(0,2),(1,1)\}.$ In this present paper we generalize the notion of Dirichlet-type space introduced in \cite{BCG2024}, by replacing the harmonic weights with superharmonic weights. Motivated by \cite[Definition~1.8]{A1993}, \cite[Definition~1.1]{BCG2024} and \cite[Eq~3.1]{R1991} we define the following:
\begin{definition} \label{definition-diri}
For $\mu_1, \mu_2 \in M_+(\overline{\mathbb D})$ and a holomorphic function $f$ on the unit bidisc $\mathbb D^2,$ {\it the Dirichlet integral} $\mathcal D_{\mu_1, \mu_2}(f)$ of $f$ is given by
\begin{align} \label{dirichlet-int}
\mathcal D_{\mu_1, \mu_2}(f) &:=\sup_{0 < r < 1}\int_{0}^{2\pi} \int_{\mathbb D}|\partial_1 f(z_1, re^{i \theta})|^2 U_{\mu_1}(z_1) \,dA(z_1)\frac{d\theta}{2\pi} \notag \\
&+ \sup_{0 < r < 1} \int_{0}^{2\pi} \int_{\mathbb D}|\partial_2 f(re^{i \theta}, z_2)|^2 U_{\mu_2}(z_2) \,dA(z_2)\frac{d\theta}{2\pi}.
\end{align}
Consider the {\it Dirichlet-type space} $$\mathcal D(\mu_1,\mu_2):=\{f\in H^2(\mathbb D^2):\mathcal D_{\mu_1,\mu_2}(f)<\infty\},$$ 
where $H^2(\mathbb D^2)$ denotes the {\it Hardy space} on the unit bidisc $\mathbb D^2$ (see \cite{Ru1969}).
\end{definition}

It is clear from the definition that $\mathcal D_{\mu_1,\mu_2}(f)$ defines a seminorm on the space $\mathcal D(\mu_1,\mu_2).$ So we consider the following norm on $\mathcal D(\mu_1,\mu_2)$
\begin{equation}\label{norm-on-d-mu12}
\|f\|^2:=\|f\|^2_{H^2(\mathbb D^2)}+\mathcal D_{\mu_1,\mu_2}(f), \quad f\in \mathcal D(\mu_1,\mu_2).
\end{equation}
With this above norm $\mathcal D(\mu_1, \mu_2)$ is a reproducing kernel Hilbert space (see Lemma~\ref{D-mu-rkhs}). If we assume $\mu_j(\mathbb D)=0$ for $j=1,2,$ $\mathcal D(\mu_1,\mu_2)$ coincides with the notion of Dirichlet-type spaces appeared in \cite{BCG2024}.

\subsection{Statement of the main theorem}
Before stating the main theorem let us recall that a subspace $\mathcal W$ of $\mathcal H$ is said to be {\it wandering} (see \cite[Definition~1.5]{BCG2024}, cf. \cite[p.~103]{H1961}) for a commuting pair $T=(T_1,T_2)$ on $\mathcal H$ if for $\alpha_1,\alpha_2,\beta_1,\beta_2 \in \mathbb Z_+,$ 
\beqn
 T^{\alpha_1}_1 \mathcal W & \perp & T^{\beta_1}_1T^{\beta_2}_2\mathcal W, \text{ whenever } \beta_2 \neq 0, \\
  T^{\alpha_2}_2 \mathcal W & \perp & T^{\beta_1}_1T^{\beta_2}_2\mathcal W,  \text{ whenever }\beta_1 \neq 0.
\eeqn 
\begin{theorem} \label{model-theorem}
Let $T=(T_1,T_2)$ be a commuting pair on $\mathcal H.$ Then the following statements are equivalent:
\begin{enumerate}[label=$(\Alph*)$]
\item $T$ is a cyclic analytic toral completely hyperexpansive $2$-tuple such that $\beta_{(1,1)}(T)=0,$ and $T$ possesses a cyclic vector $f_0 \in \ker T^*,$ where $\ker T^*$ is a  wandering subspace of $T,$
\item  there exist $\mu_1,\mu_2\in M_+(\overline{\mathbb D})$ such that $T$ is unitarily equivalent to $\mathscr M_z$ on $\mathcal D(\mu_1,\mu_2).$
\end{enumerate}
\end{theorem}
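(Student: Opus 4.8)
The plan is to prove both implications by comparing the Gram matrix of the natural spanning set $\{T_1^mT_2^nf_0\}$ with that of the monomials $\{z_1^mz_2^n\}$ in $\mathcal D(\mu_1,\mu_2)$. Write $e_{m,n}=T_1^mT_2^nf_0$ and $\varepsilon_{m,n}=z_1^mz_2^n$, and normalise so that $\|f_0\|=1=\|1\|_{\mathcal D(\mu_1,\mu_2)}$. Since $f_0$ is cyclic and polynomials are dense in $\mathcal D(\mu_1,\mu_2)$, the whole statement reduces to producing $\mu_1,\mu_2$ for which $\langle e_{m,n},e_{m',n'}\rangle=\langle\varepsilon_{m,n},\varepsilon_{m',n'}\rangle_{\mathcal D(\mu_1,\mu_2)}$ for all indices: then $e_{m,n}\mapsto\varepsilon_{m,n}$ extends to a unitary $U\colon\mathcal H\to\mathcal D(\mu_1,\mu_2)$ with $UT_i=\mathscr M_{z_i}U$. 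A computation with the explicit norm --- using that $|\partial_jf|^2$ is subharmonic in the undifferentiated slot, so each supremum in \eqref{dirichlet-int} is an increasing limit as $r\to1^-$ and the polarised forms are the corresponding limits --- shows that the monomial Gram matrix separates:
\[
\langle\varepsilon_{m,n},\varepsilon_{m',n'}\rangle_{\mathcal D(\mu_1,\mu_2)}=\delta_{n,n'}\langle z^m,z^{m'}\rangle_{\mathcal D(\mu_1)}+\delta_{m,m'}\langle z^n,z^{n'}\rangle_{\mathcal D(\mu_2)}-\delta_{m,m'}\delta_{n,n'},
\]
the two right-hand inner products being the one-variable ones coming from \eqref{norm-d-mu}.

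For $(A)\Rightarrow(B)$ I first unwind $\beta_{(1,1)}(T)=0$. Setting $D_j=T_j^*T_j-I\ge0$, the vanishing of the defect operator \eqref{defect-operator} is equivalent to the sliding relations $T_2^*D_1T_2=D_1$ and $T_1^*D_2T_1=D_2$ (the two being interchangeable because $T_1T_2=T_2T_1$ gives $T_2^*T_1^*T_1T_2=T_1^*T_2^*T_2T_1$). Next, restrict $T_i$ to the coordinate cyclic subspace $\mathcal H_i=\overline{\operatorname{span}}\{T_i^kf_0:k\ge0\}$. Each $\mathcal H_i$ is $T_i$-invariant with cyclic vector $f_0$; complete hyperexpansivity descends because $\beta_k(T_i)=\beta_{k\epsilon_i}(T)\le0$ by \eqref{beta-alpha}, and analyticity descends because $\bigcap_kT_i^k\mathcal H_i\subseteq\bigcap_k\sum_{\alpha_1+\alpha_2=k}T_1^{\alpha_1}T_2^{\alpha_2}\mathcal H=\{0\}$. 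Thus the one-variable model theorem \cite[Theorem~2.5]{A1993} applies to each restriction and yields $\mu_1,\mu_2\in M_+(\overline{\mathbb D})$ with $\langle e_{m,0},e_{m',0}\rangle=\langle z^m,z^{m'}\rangle_{\mathcal D(\mu_1)}$ and $\langle e_{0,n},e_{0,n'}\rangle=\langle z^n,z^{n'}\rangle_{\mathcal D(\mu_2)}$.

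In view of the last two displays it remains to prove, purely at the operator level,
\begin{equation*}
\langle e_{m,n},e_{m',n'}\rangle=\delta_{n,n'}\langle e_{m,0},e_{m',0}\rangle+\delta_{m,m'}\langle e_{0,n},e_{0,n'}\rangle-\delta_{m,m'}\delta_{n,n'},\tag{$\star$}
\end{equation*}
and this is the crux. The three hypotheses feed in as follows: $f_0\in\ker T^*$ gives $T_1^*f_0=T_2^*f_0=0$, hence $\langle T_i^kf_0,f_0\rangle=0$ for $k\ge1$; the wandering of $\operatorname{span}\{f_0\}$ gives $\langle T_1^af_0,T_1^bT_2^cf_0\rangle=0$ for $c\ge1$ and $\langle T_2^af_0,T_1^bT_2^cf_0\rangle=0$ for $b\ge1$; and the sliding relations let one move a factor $D_1$ rightward past every $T_2$ (and $D_2$ past every $T_1$) without change. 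The mechanism, starting from $\langle e_{m,n},e_{m',n'}\rangle=\langle T_1^{*m}T_1^mT_2^nf_0,T_2^{n'}f_0\rangle$, is to expand $T_1^{*m}T_1^m$ via $T_1^*T_1=I+D_1$, slide each $D_1$ to the right using $T_2^*D_1T_2=D_1$, whereupon the wandering relations annihilate the genuinely mixed cross-terms and $f_0\in\ker T^*$ kills the boundary terms; what survives is a pure $T_1$-inner product $\langle e_{m,0},e_{m',0}\rangle$ weighted by $\delta_{n,n'}$, together with the symmetric $T_2$-contribution and the overlap correction $-\delta_{m,m'}\delta_{n,n'}$. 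Running this as an induction on $m+n+m'+n'$ (the base and the one-variable cases being immediate) establishes $(\star)$, and with it the unitary $U$, completing $(A)\Rightarrow(B)$.

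For the converse $(B)\Rightarrow(A)$, cyclicity, analyticity and toral complete hyperexpansivity of $\mathscr M_z$ on $\mathcal D(\mu_1,\mu_2)$ are already established. The identity $\beta_{(1,1)}(\mathscr M_z)=0$ follows from the norm \eqref{norm-on-d-mu12}: the Hardy part contributes nothing because $\mathscr M_z$ is a toral isometry on $H^2(\mathbb D^2)$, while on each Dirichlet summand of \eqref{dirichlet-int} the differentiated variable is completely hyperexpansive and the other is an isometry doubly commuting with it, so the summand's mixed increment $\mathscr M_{z_j}^*D\mathscr M_{z_j}-D$ vanishes. Finally, the separated Gram formula of the first paragraph gives $\langle1,z_ig\rangle=0$ for every polynomial $g$ (using $\langle1,z^k\rangle_{\mathcal D(\mu_j)}=0$ for $k\ge1$), so $1\in\ker\mathscr M_z^*$; a short argument using the bounded evaluation functional at the origin identifies $\ker\mathscr M_z^*=\mathbb C\cdot1$; and the same formula yields $\langle z_1^a,z_1^bz_2^c\rangle=0$ for $c\ge1$ and $\langle z_2^a,z_1^bz_2^c\rangle=0$ for $b\ge1$, i.e.\ $\mathbb C\cdot1$ is wandering. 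I expect the hardest step to be the inductive proof of $(\star)$: the one-dimensional and mixed-boundary entries are controlled by $\ker T^*$ and wandering alone, but the fully mixed entries $m,n,m',n'\ge1$ require the sliding relations and the wandering relations to interlock, and keeping that bookkeeping clean across the induction is where the real work lies.
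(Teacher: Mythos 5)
Your proposal is correct and takes essentially the same route as the paper: restrict $T_j$ to the coordinate cyclic subspaces $\mathcal H_j$, invoke Aleman's one-variable model theorem to produce $\mu_1,\mu_2$ and the intertwining unitaries, and then match the two Gram matrices, your operator-level identity $(\star)$ being exactly the paper's Lemma~\ref{inner-p-simplify} (which the paper obtains from the identity $T_1^{*k}T_2^{*l}T_1^kT_2^l=T_1^{*k}T_1^k+T_2^{*l}T_2^l-I$, i.e.\ your sliding relations iterated, together with the wandering and $\ker T^*$ conditions), and your separated monomial formula being the paper's Proposition~\ref{inner-product-formula}. The converse direction also coincides with the paper's appeal to Corollaries~\ref{joint-kernel}, \ref{wandering-s} and Lemma~\ref{toral-concave}.
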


\begin{remark} 
  From \cite[Theorem~1]{R1988} we know that any cyclic completely hyperexpansive operator on a complex separable Hilbert space has the wandering subspace property. But this fact fails in two-variable. For details one is refer to \cite[Remark~2.5]{BCG2024} (cf. \cite[Example~6.8]{BEKS2017}).
\end{remark}

Theorem~\ref{model-theorem} presents an analogue of \cite[Theorem~2.5]{A1993}, (cf. \cite[Theorem~2.4]{BCG2024}, \cite[Theorem~5.1]{R1991} and \cite[Theorem~5.1]{BGV2024}).
In Section~\ref{polynomial-density} we discuss the polynomial density, Gleason problem and boundedness of the multiplication $2$-tuple $\mathscr M_z=(\mathscr M_{z_1},\mathscr M_{z_2})$ on $\mathcal D(\mu_1,\mu_2).$ A proof of Theorem~\ref{model-theorem} is presented in Section~\ref{representaion-theorem} along with some of its consequences.

\section{Polynomial Density and Gleason Problem}\label{polynomial-density}
 Let $g \in H^2(\mathbb D)$ and $\zeta \in \overline{\mathbb D}.$ If $\zeta \in \mathbb D,$ recall that the local Dirichlet integral of $g$ at $\zeta$ (see \cite[p.~74]{A1993}, \cite[Theorem~2.1]{EKKMR2016}) is defined as 
\begin{equation}\label{local-diri-int}
D_\zeta(g)=\left \| \frac{g-g(\zeta)}{z-\zeta}\right \|^2_{H^2(\mathbb D)}= \int_0^{2\pi} \left | \frac{g(e^{it})-g(\zeta)}{e^{it}-\zeta}\right |^2 \frac{dt}{2\pi}.
\end{equation}
If $\zeta \in \mathbb T$ and $g(\zeta):= \lim_{r\rar 1^-}g(r\zeta)$ exists, we use the same formula \eqref{local-diri-int} to denote the local Dirichlet integral $D_\zeta(g)$ of $g$ at $\zeta.$  Otherwise, we set $D_\zeta(g)=\infty$ (see \cite[p.~356]{RS1991}). For general $\mu \in M_+(\overline{\mathbb D}),$  \cite[Theorem~1.9, p.~74]{A1993} (cf. \cite[Proposition~2.2]{RS1991}) gives
\beq \label{d-zeta-int}
\mathcal D_\mu(g)=\int_{\overline{\mathbb D}} \mathcal D_\zeta(g)d\mu(\zeta),\quad g\in \mathcal D(\mu).
\eeq
Here is a two-variable analog of the above equation.
\begin{proposition}
For $f\in \mathcal D(\mu_1, \mu_2),$
\begin{align*}
\mathcal D_{\mu_1, \mu_2}(f)&=\sup_{0 < r < 1}\int_{0}^{2\pi} \int_{\overline {\mathbb D}}D_{\zeta_1}(f(\cdot, re^{i\theta}))d\mu_1(\zeta_1)\frac{d\theta}{2\pi} \notag \\
&+ \sup_{0 < r < 1} \int_{0}^{2\pi}\int_{\overline {\mathbb D}}D_{\zeta_2}(f( re^{i\theta},\cdot))d\mu_2(\zeta_2) \frac{d\theta}{2\pi}.
\end{align*}
\end{proposition}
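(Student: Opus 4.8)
The plan is to reduce the two-variable identity to the one-variable disintegration formula \eqref{d-zeta-int} by slicing. Fix $r \in (0,1)$ and $\theta$, and set $g_{r,\theta}(z_1):=f(z_1, re^{i\theta})$. Since $f$ is holomorphic on $\mathbb D^2$ and $re^{i\theta}\in\mathbb D$, the slice $g_{r,\theta}$ is holomorphic on $\mathbb D$; moreover it lies in $H^2(\mathbb D)$, because the restriction of an $H^2(\mathbb D^2)$ function to an interior slice belongs to $H^2(\mathbb D)$ (a short Cauchy--Schwarz estimate on the Taylor coefficients, using $|w|=r<1$). Since $\partial_1 f(z_1, re^{i\theta})=g_{r,\theta}'(z_1)$, the innermost $z_1$-integral appearing in \eqref{dirichlet-int} is precisely the one-variable Dirichlet integral $\mathcal D_{\mu_1}(g_{r,\theta})$.

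Next I would apply \eqref{d-zeta-int} slicewise. As $f\in\mathcal D(\mu_1,\mu_2)$, the first supremum in \eqref{dirichlet-int} is finite, so for each fixed $r$ the $\theta$-integral of $\mathcal D_{\mu_1}(g_{r,\theta})$ is finite; hence $g_{r,\theta}\in\mathcal D(\mu_1)$ for almost every $\theta$. For such $\theta$, the identity \eqref{d-zeta-int} gives $\mathcal D_{\mu_1}(g_{r,\theta})=\int_{\overline{\mathbb D}}D_{\zeta_1}(g_{r,\theta})\,d\mu_1(\zeta_1)$. Substituting this into the first term of \eqref{dirichlet-int}, and carrying out the symmetric computation for the second term with the slice $h_{r,\theta}(z_2):=f(re^{i\theta}, z_2)$ and $\partial_2 f(re^{i\theta}, z_2)=h_{r,\theta}'(z_2)$, produces exactly the two summands in the asserted formula; the outer operations $\sup_{0<r<1}$ and the $\theta$-average are untouched by this slicewise rewriting, so they are simply restored at the end.

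The one point genuinely requiring justification is that the $\theta$-integral of $\mathcal D_{\mu_1}(g_{r,\theta})$ may be rewritten as the iterated integral $\int_0^{2\pi}\int_{\overline{\mathbb D}}D_{\zeta_1}(g_{r,\theta})\,d\mu_1(\zeta_1)\,\frac{d\theta}{2\pi}$. Because the local Dirichlet integrals are nonnegative, Tonelli's theorem licenses this interchange once $(\theta,\zeta_1)\mapsto D_{\zeta_1}(g_{r,\theta})$ is known to be jointly measurable; this I would deduce from the integral representation \eqref{local-diri-int}, using joint continuity of the slice in the interior of $\mathbb D$ together with the almost-everywhere existence of boundary values (so that the convention $D_{\zeta_1}=\infty$ is enforced only on a measurable set). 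The exceptional null set of $\theta$ for which the slice leaves $\mathcal D(\mu_1)$ contributes nothing to either side. This measurability and Fubini--Tonelli bookkeeping is the main---and essentially the only---technical obstacle; granting it, the proposition follows immediately from \eqref{d-zeta-int} applied slice by slice.
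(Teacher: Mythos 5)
Your proposal is correct and follows essentially the same route as the paper's proof: slice $f$, note that for each fixed $r$ almost every slice $f(\cdot,re^{i\theta})$ lies in $\mathcal D(\mu_1)$ (resp.\ $f(re^{i\theta},\cdot)\in\mathcal D(\mu_2)$) because the suprema in \eqref{dirichlet-int} are finite, apply Aleman's one-variable formula \eqref{d-zeta-int} slicewise, and then restore the $\theta$-average and the supremum over $r$. The only differences are cosmetic: the paper cites \cite[Lemma~3.2]{BCG2024} for the $H^2(\mathbb D)$ slice fact rather than re-deriving it, and it does not dwell on your Fubini--Tonelli concern, which is indeed harmless since no interchange of integration order is actually performed---one inner integral is replaced pointwise a.e.\ in $\theta$ by an equal quantity, and measurability in $\theta$ transfers from the manifestly measurable function $\theta\mapsto\int_{\mathbb D}|\partial_1 f(z_1,re^{i\theta})|^2U_{\mu_1}(z_1)\,dA(z_1)$.
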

\begin{proof}
    Let $f\in \mathcal D(\mu_1, \mu_2)$ so it belongs to $H^2(\mathbb D^2).$ By \cite[Lemma~3.2]{BCG2024}, for each $r \in (0,1)$ and $\theta \in [0,2\pi]$ the slice functions  $f(\cdot, re^{i \theta})$ and $ f(re^{i\theta}, \cdot)$ belong to $H^2(\mathbb D).$ From \eqref{dirichlet-int} we get that for each $r\in(0,1)$ and almost every $\theta\in [0,2\pi],$
$$\int_{\mathbb D}|\partial_1 f(z_1, re^{i \theta})|^2 U_{\mu_1}(z_1) dA(z_1), \int_{\mathbb D}|\partial_2 f(re^{i \theta}, z_2)|^2 U_{\mu_2}(z_2) dA(z_2)< \infty. $$
 In other words for each $r \in (0,1),$
\beq \label{f-r-theta-in-d-mu}
&&\text{ there exists a measure zero subset $\Omega_r \subseteq [0,2\pi]$ such that the slices} \notag \\
 &&  f(\cdot,re^{i\theta})\in \mathcal D(\mu_1) \text{ and } f(re^{i\theta},\cdot)\in \mathcal D(\mu_2) \text{ for } \theta \in [0,2\pi]\setminus \Omega_r.
\eeq
Thus \eqref{dirichlet-int} becomes
\begin{align}\label{Dirichlet-int-new}
\mathcal D_{\mu_1, \mu_2}(f)=\sup_{0 < r < 1}\int_{0}^{2\pi} \mathcal D_{\mu_1}(f(\cdot, re^{i \theta}))\frac{d\theta}{2\pi} + \sup_{0 < r < 1} \int_{0}^{2\pi}\mathcal D_{\mu_2}(f(re^{i\theta}, \cdot)) \frac{d\theta}{2\pi}.
\end{align}
Combining \eqref{d-zeta-int} and \eqref{Dirichlet-int-new} yields the result.
\end{proof}
In view of \cite[Lemma~1.1]{BCG2024}, for any holomorphic function $f$ on $\mathbb D^2$ and $\nu \in M_+(\mathbb D),$ the function $r\rightarrow \int_{\mathbb T}\int_{\mathbb D}|f(z,re^{i\theta})|^2d\nu(z) d\theta$ is increasing. So we can replace $\sup_{0<r<1}$ in \eqref{dirichlet-int} by $\lim_{r\to 1^-}.$ Thus for each $f\in \mathcal D(\mu_1, \mu_2),$ $\mathcal D_{\mu_1, \mu_2}(f)$ breaks into two parts as $\mathcal D_{\mu_1, \mu_2}(f)=I_{\mu_1, \mu_2}(f)+B_{\mu_1, \mu_2}(f),$ where $I_{\mu_1, \mu_2}(f)$ and $B_{\mu_1, \mu_2}(f)$ are the integrals correspond to $\mu_j|_{\mathbb D}$ and $\mu_j|_{\mathbb T},$ respectively, for $j=1,2$ and given by
\begin{align}\label{I-mu-formula}
I_{\mu_1, \mu_2}(f)&=\sup_{0 < r < 1}\int_{0}^{2\pi} \int_{\mathbb D}D_{\zeta_1}(f(\cdot, re^{i\theta}))d\mu_1(\zeta_1)\frac{d\theta}{2\pi} \notag \\
&+ \sup_{0 < r < 1} \int_{0}^{2\pi}\int_{\mathbb D}D_{\zeta_2}(f( re^{i\theta},\cdot))d\mu_2(\zeta_2) \frac{d\theta}{2\pi},
\end{align}
\begin{align*}
B_{\mu_1, \mu_2}(f)&=\sup_{0 < r < 1}\int_{0}^{2\pi} \int_{\mathbb T}D_{\zeta_1}(f(\cdot, re^{i\theta}))d\mu_1(\zeta_1)\frac{d\theta}{2\pi} \notag \\
&+ \sup_{0 < r < 1} \int_{0}^{2\pi}\int_{\mathbb T}D_{\zeta_2}(f( re^{i\theta},\cdot))d\mu_2(\zeta_2) \frac{d\theta}{2\pi}.
\end{align*}
The following lemmas are fundamental to prove the polynomial density and boundedness of the multiplication $2$-tuple $\mathscr M_z$ on $\mathcal D(\mu_1,\mu_2).$
\begin{lemma}\label{D-mu-rkhs}
The Dirichlet-type space $\mathcal D(\mu_1,\mu_2)$ is a reproducing kernel Hilbert space. If $\kappa:\mathbb D\times \mathbb D\to \mathbb C$ is the reproducing kernel of $\mathcal D(\mu_1,\mu_2),$ then for any $r\in (0,1),$ $\bigvee\{\kappa(\cdot,w):|w|<r\}=\mathcal D(\mu_1,\mu_2)$ and $\kappa(\cdot,0)=1.$
\end{lemma}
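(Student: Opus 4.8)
The plan is to verify the two defining properties of a reproducing kernel Hilbert space---completeness and boundedness of point evaluations---and then to read off the two special facts about $\kappa$ from the explicit form of the norm \eqref{norm-on-d-mu12}. The boundedness of point evaluations is immediate: since $\|f\|^2_{H^2(\mathbb D^2)}\leq \|f\|^2$ for every $f\in \mathcal D(\mu_1,\mu_2)$ and $H^2(\mathbb D^2)$ is itself a reproducing kernel Hilbert space, each evaluation functional $f\mapsto f(w)$, $w\in \mathbb D^2$, being bounded on $H^2(\mathbb D^2)$, is \emph{a fortiori} bounded on $\mathcal D(\mu_1,\mu_2)$ with respect to the larger norm. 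Thus the entire content of the first assertion is to show that $\mathcal D(\mu_1,\mu_2)$ is complete.

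For completeness I would establish that the Dirichlet seminorm $\mathcal D_{\mu_1,\mu_2}$ is lower semicontinuous along sequences converging in $H^2(\mathbb D^2)$. Let $(f_n)$ be Cauchy in $\mathcal D(\mu_1,\mu_2)$; by the domination above it is Cauchy in $H^2(\mathbb D^2)$ and hence converges there to some $f$, and since $H^2(\mathbb D^2)$-convergence forces uniform convergence on compact subsets of $\mathbb D^2$, the Cauchy estimates give $\partial_j f_n\to \partial_j f$ pointwise on $\mathbb D^2$ for $j=1,2$. Fixing $r$ and applying Fatou's lemma to the nonnegative inner integrals in \eqref{dirichlet-int}---first in $z_j$, then in $\theta$---shows that each fixed-$r$ slice integral of $f$ is at most the $\liminf_n$ of the corresponding slice integral of $f_n$, which in turn is dominated by $\mathcal D_{\mu_1,\mu_2}(f_n)$. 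Taking the supremum over $r$ in each of the two summands and using the superadditivity $\liminf a_n+\liminf b_n\leq \liminf(a_n+b_n)$ yields $\mathcal D_{\mu_1,\mu_2}(f)\leq \liminf_n \mathcal D_{\mu_1,\mu_2}(f_n)$. In particular $f\in \mathcal D(\mu_1,\mu_2)$, and the same inequality applied to the differences $f_m-f_n$ (letting $m\to\infty$ for fixed $n$) bounds $\mathcal D_{\mu_1,\mu_2}(f-f_n)$ by $\liminf_m \mathcal D_{\mu_1,\mu_2}(f_m-f_n)$, which is small for large $n$; together with $\|f-f_n\|_{H^2(\mathbb D^2)}\to 0$ this gives $\|f-f_n\|\to 0$.

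To identify $\kappa(\cdot,0)$, note that the constant function $1$ lies in $\mathcal D(\mu_1,\mu_2)$ with $\mathcal D_{\mu_1,\mu_2}(1)=0$; polarizing \eqref{norm-on-d-mu12}, the Dirichlet part of the inner product $\langle f,1\rangle$ vanishes because it involves only the partial derivatives of the second argument, which are zero for $1$. Hence $\langle f,1\rangle=\langle f,1\rangle_{H^2(\mathbb D^2)}=f(0)$, the last equality because $1$ is precisely the reproducing kernel of $H^2(\mathbb D^2)$ at the origin. By uniqueness of the reproducing kernel this gives $\kappa(\cdot,0)=1$.

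Finally, for the density of $\{\kappa(\cdot,w):|w|<r\}$ I would argue by orthogonal complements: if $f\in \mathcal D(\mu_1,\mu_2)$ satisfies $\langle f,\kappa(\cdot,w)\rangle=0$ for all $w$ with $|w|<r$, then the reproducing property gives $f(w)=0$ on the nonempty open subset $\{|w|<r\}$ of $\mathbb D^2$, and the identity theorem for holomorphic functions of two variables forces $f\equiv 0$; hence the orthogonal complement of the indicated set is trivial and its closed linear span is all of $\mathcal D(\mu_1,\mu_2)$. The main obstacle in this scheme is the completeness step: one must handle the supremum over $r$ together with the double integral carefully, ensuring that the pointwise convergence of the derivatives and the nonnegativity of the weights $U_{\mu_j}$ suffice to push Fatou's lemma through both the $z_j$- and $\theta$-integrations and then through the two suprema while preserving the correct direction of the inequality.
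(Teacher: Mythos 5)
Your proposal is correct in substance, but it takes a genuinely different (more self-contained) route than the paper: the paper gives no direct argument at all, it simply invokes \cite[Lemma~3.1]{BCG2024} --- the same statement for harmonic weights $P_{\mu_1},P_{\mu_2}$ --- and observes that the proof survives verbatim when $P_{\mu_j}$ is replaced by the superharmonic potential $U_{\mu_j}$, since only nonnegativity and integrability of the weight are used. What you wrote out (lower semicontinuity of the Dirichlet seminorm along $H^2(\mathbb D^2)$-convergent sequences via Fatou for completeness, domination of the $H^2$-norm for bounded point evaluations, the identity theorem for the density of $\{\kappa(\cdot,w):|w|<r\}$, and annihilation of constants for $\kappa(\cdot,0)=1$) is precisely the kind of argument that citation encapsulates, so your proposal effectively supplies the details the paper outsources. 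The delicate part, the Fatou step, is handled correctly: the direction of the inequality through the double integral, through the supremum over $r$, and through the superadditivity of $\liminf$ all check out, as does its reapplication to the differences $f_m-f_n$ to get norm convergence.

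One point you should patch: a reproducing kernel Hilbert space is in particular a \emph{Hilbert} space, and you never verify that the norm \eqref{norm-on-d-mu12} is induced by an inner product; completeness plus bounded point evaluations is not enough, and your polarization step for $\kappa(\cdot,0)=1$ silently presupposes this. Because of the supremum over $r$ in \eqref{dirichlet-int}, the parallelogram law is not automatic. The fix is the monotonicity fact the paper records just before this lemma (from \cite[Lemma~1.1]{BCG2024}): the inner integrals are increasing in $r$, so $\sup_{0<r<1}$ can be replaced by $\lim_{r\to 1^-}$; hence $\mathcal D_{\mu_1,\mu_2}$ is a pointwise limit of quadratic seminorms and is therefore itself a quadratic seminorm, so the norm polarizes. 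With that in hand, your identification $\langle f,1\rangle=\langle f,1\rangle_{H^2(\mathbb D^2)}=f(0)$ is valid --- indeed even more simply, $\mathcal D_{\mu_1,\mu_2}(f+i^k\cdot 1)=\mathcal D_{\mu_1,\mu_2}(f)$ for every $k$, so the Dirichlet contributions cancel outright in the polarization sum --- and the rest of your argument stands as written.
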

\begin{proof}
By replacing $P_{\mu_1}$ by $U_{\mu_1}$ and $P_{\mu_2}$ by $U_{\mu_2}$ in \cite[Lemma~3.1]{BCG2024} and arguing  similarly we get required result.  
\end{proof}

Let $\mu \in M_+(\overline{\mathbb D})$ and $g\in \mathcal D(\mu).$ For each $r \in (0,1)$ define the function $g_r$ on $\mathbb D$ as  $g_r(w):=g(rw),$ $w \in \mathbb D.$  Combining  \cite[Lemma~4.1, p.~87]{A1993} and \cite[Theorem~1.9, p.~74]{A1993} gives $D_\mu(g_r)\leq \frac{5}{2}D_\mu(g).$ Later, in \cite[Theorem~4.2]{EKKMR2016} this inequality is improved to
\begin{equation}\label{d-zeta-g-r}
D_\mu(g_r)\leq D_\mu(g).
\end{equation}
The following lemma provides a similar estimate as of \eqref{d-zeta-g-r} for $\mathcal D(\mu_1,\mu_2).$
For $R=(R_1,R_2)\in (0,1)^2$ and $f\in \mathcal O(\mathbb D^2)$ let $f_R(z)=f(R_1z_1,R_2z_2)$ for $z=(z_1,z_2)\in \mathbb D^2.$
\begin{lemma}
For any $R=(R_1,R_2)\in (0,1)^2$ and $f\in \mathcal D(\mu_1,\mu_2),$ 
\begin{equation*}
\mathcal D_{\mu_1,\mu_2}(f_R)\leq \mathcal D_{\mu_1,\mu_2}(f).
\end{equation*}
\end{lemma}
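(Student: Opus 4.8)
The plan is to reduce this two-variable estimate to the one-variable inequality \eqref{d-zeta-g-r} slice by slice. Using the reformulation \eqref{Dirichlet-int-new}, I would write $\mathcal D_{\mu_1,\mu_2}(f_R)$ as the sum of two supremum-integral terms: one governed by $\mu_1$ and built from the slices $f_R(\cdot,re^{i\theta})$, the other governed by $\mu_2$ and built from the slices $f_R(re^{i\theta},\cdot)$. By symmetry it suffices to bound the first term, the second following by interchanging the roles of the two coordinates.

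The key observation is that each slice of $f_R$ is itself a one-variable dilation of a slice of $f$. Concretely, fixing $r$ and $\theta$, the function $z_1 \mapsto f_R(z_1, re^{i\theta}) = f(R_1 z_1, R_2 r e^{i\theta})$ equals $g_{R_1}$, where $g(w) := f(w, R_2 r e^{i\theta})$ and $g_{R_1}(w) = g(R_1 w)$ is exactly the one-variable dilation appearing in \eqref{d-zeta-g-r}. By \eqref{f-r-theta-in-d-mu} these slices lie in $\mathcal D(\mu_1)$ for almost every $\theta$, so \eqref{d-zeta-g-r} gives the pointwise (in $\theta$) bound $\mathcal D_{\mu_1}(f_R(\cdot, re^{i\theta})) \leq \mathcal D_{\mu_1}(f(\cdot, R_2 r e^{i\theta}))$.

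Integrating this inequality over $\theta$ and taking the supremum over $r \in (0,1)$ yields
\[
\sup_{0<r<1}\int_0^{2\pi}\mathcal D_{\mu_1}(f_R(\cdot,re^{i\theta}))\,\frac{d\theta}{2\pi} \;\le\; \sup_{0<r<1}\int_0^{2\pi}\mathcal D_{\mu_1}(f(\cdot,R_2 r e^{i\theta}))\,\frac{d\theta}{2\pi}.
\]
I would then substitute $s = R_2 r$ on the right-hand side: the inner integrand depends on the second slot only through the modulus $|R_2 r e^{i\theta}| = R_2 r$, so as $r$ runs through $(0,1)$ the parameter $s$ runs through $(0,R_2)$. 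Since $(0,R_2)\subseteq(0,1)$, the supremum over the smaller interval is dominated by the supremum over $(0,1)$, which is precisely the first term of $\mathcal D_{\mu_1,\mu_2}(f)$. Applying the symmetric argument to the $\mu_2$-term (with substitution $s = R_1 r$) and adding the two bounds completes the proof.

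The argument is essentially routine once the slice-dilation identity is spotted, so I do not expect a genuine obstacle; the only point requiring care is matching the radial parameter after dilation, namely recognizing that the outer radius $r$ in a slice of $f_R$ corresponds to radius $R_j r$ in the associated slice of $f$, so that the substitution stays inside $(0,1)$ and no boundary contribution is gained or lost.
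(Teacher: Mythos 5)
Your proof is correct and follows essentially the same route as the paper: decompose $\mathcal D_{\mu_1,\mu_2}(f_R)$ via \eqref{Dirichlet-int-new}, recognize each slice of $f_R$ as a one-variable dilation of a slice of $f$ so that \eqref{d-zeta-g-r} applies, and then handle the radial parameter. The only cosmetic difference is the final step, where the paper cites the monotonicity result \cite[Lemma~1.1]{BCG2024} to absorb the shrunken radius $R_2 r$ (resp.\ $R_1 r$), while you use the substitution $s=R_jr$ together with the inclusion $(0,R_j)\subseteq(0,1)$; both justifications are valid.
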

\begin{proof}
Let $f \in \mathcal D(\mu_1, \mu_2).$ Fix $R=(R_1,R_2)\in (0,1)^2.$ By \eqref{Dirichlet-int-new},
\begin{eqnarray*}
&&\mathcal D_{\mu_1, \mu_2}(f_R)\\
&=&\sup_{0 < r < 1}\int_{0}^{2\pi} \mathcal D_{\mu_1}(f_R(\cdot, re^{i\theta}))\frac{d\theta}{2\pi}
+ \sup_{0 < r < 1} \int_{0}^{2\pi}\mathcal D_{\mu_2}(f_R( re^{i\theta},\cdot)) \frac{d\theta}{2\pi}\\
&\overset{\eqref{d-zeta-g-r}} \leq & \sup_{0 < r < 1}\int_{0}^{2\pi} \mathcal D_{\mu_1}(f(\cdot, R_2re^{i \theta}))\frac{d\theta}{2\pi} +\sup_{0 < r < 1} \int_{0}^{2\pi}\mathcal D_{\mu_2}(f(R_1re^{i\theta}, \cdot)) \frac{d\theta}{2\pi}.\\
\end{eqnarray*}
Finally applying \cite[Lemma~1.1]{BCG2024} yields the result.

\end{proof}
The next lemma is a prototype of \cite[Lemma~3.7]{BCG2024} (cf. \cite[Theorem~ 7.3.1]{EKMR2014}) and the proof is similar so left to the reader. 
\begin{lemma}\label{f_R-f-estimate}
Let $f\in \mathcal D(\mu_1,\mu_2).$ Then 
$$\lim_{R_1,R_2\to 1^-}\mathcal D_{\mu_1,\mu_2}(f-f_R)=0.$$
\end{lemma}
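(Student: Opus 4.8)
The plan is to prove $\lim_{R_1,R_2\to 1^-}\mathcal D_{\mu_1,\mu_2}(f-f_R)=0$ by reducing the two-variable statement to the one-variable convergence result already available for $\mathcal D(\mu)$. Using the decomposition \eqref{Dirichlet-int-new}, it suffices to show separately that
\[
\sup_{0<r<1}\int_0^{2\pi}\mathcal D_{\mu_1}\big((f-f_R)(\cdot, re^{i\theta})\big)\frac{d\theta}{2\pi}\to 0
\]
as $R_1,R_2\to 1^-$, together with the analogous statement for the second summand. First I would fix a slice: for each $r\in(0,1)$ and $\theta$ outside the null set $\Omega_r$ from \eqref{f-r-theta-in-d-mu}, the slice $f(\cdot, re^{i\theta})$ lies in $\mathcal D(\mu_1)$, so the one-variable result \cite[Lemma~3.7]{BCG2024} (or the cited \cite[Theorem~7.3.1]{EKMR2014}) applies to give pointwise-in-$(r,\theta)$ convergence of the dilated slice. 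The subtlety is that $(f_R)(\cdot, re^{i\theta})(w)=f(R_1 w, R_2 r e^{i\theta})$ dilates \emph{both} the free variable and the parameter, so I would write the slice of $f_R$ as the one-variable dilation (by $R_1$) of the slice $f(\cdot, R_2 r e^{i\theta})$, and handle the parameter shift $re^{i\theta}\mapsto R_2 re^{i\theta}$ using the monotonicity from \cite[Lemma~1.1]{BCG2024}.

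Next I would interchange the limit with the supremum and integral. Since the integrands are nonnegative and, by the previous lemma, $\mathcal D_{\mu_1,\mu_2}(f_R)\le\mathcal D_{\mu_1,\mu_2}(f)$, I have a uniform $L^1(d\theta)$ domination of the slice Dirichlet integrals; combined with the one-variable estimate \eqref{d-zeta-g-r} giving $\mathcal D_{\mu_1}\big((f-f_R)(\cdot,re^{i\theta})\big)$ controlled by the undilated slice, I can invoke dominated convergence in $\theta$ to push the limit inside $\int_0^{2\pi}$. The supremum over $r$ is then handled by replacing it with $\lim_{r\to 1^-}$ using the monotonicity of $r\mapsto\int_{\mathbb T}\int_{\mathbb D}|\cdot|^2 d\nu\,d\theta$ from \cite[Lemma~1.1]{BCG2024}, which was already exploited in the paragraph following the first Proposition.

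The main obstacle is the uniformity of the convergence in the parameter $\theta$ and in the radius $r$ simultaneously: pointwise convergence of each slice is immediate from the one-variable theory, but to move the limit past both $\sup_{0<r<1}$ and $\int_0^{2\pi}$ one needs a domination that is uniform across all admissible $(r,\theta)$. I expect the key device here to be the contraction estimate \eqref{d-zeta-g-r}, which guarantees that dilation never increases the local Dirichlet integral, together with the fact that $f-f_R$ itself satisfies the global bound $\mathcal D_{\mu_1,\mu_2}(f-f_R)\le 2\big(\mathcal D_{\mu_1,\mu_2}(f)+\mathcal D_{\mu_1,\mu_2}(f_R)\big)\le 4\,\mathcal D_{\mu_1,\mu_2}(f)<\infty$, supplying the dominating function. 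Because the structure is entirely parallel to the one-variable argument in \cite[Lemma~3.7]{BCG2024}, once the slice reduction and the two dominated-convergence interchanges are set up, the remaining estimates are routine, which is presumably why the authors leave the proof to the reader.
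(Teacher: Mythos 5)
Your slice-and-dominate scheme has two genuine gaps, either of which breaks the argument. First, the dominated convergence step is not justified: what you actually have is a uniform bound on the \emph{integrals}, $\mathcal D_{\mu_1,\mu_2}(f-f_R)\le 4\,\mathcal D_{\mu_1,\mu_2}(f)$, not a pointwise dominating function for the integrand $\theta\mapsto \mathcal D_{\mu_1}\bigl((f-f_R)(\cdot,re^{i\theta})\bigr)$; boundedness in $L^1(d\theta)$ is not domination. The natural pointwise bound obtainable from \eqref{d-zeta-g-r} is $\mathcal D_{\mu_1}\bigl((f-f_R)(\cdot,re^{i\theta})\bigr)\le 2\,\mathcal D_{\mu_1}\bigl(f(\cdot,re^{i\theta})\bigr)+2\,\mathcal D_{\mu_1}\bigl(f(\cdot,R_2re^{i\theta})\bigr)$, and the second term lives on the slice at the \emph{shifted} parameter $R_2re^{i\theta}$, so it still depends on $R$; only the $\theta$-integral of such quantities is monotone in the radius by \cite[Lemma~1.1]{BCG2024}, not the integrand at a fixed $\theta$. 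For the same reason, the convergence you call ``immediate from the one-variable theory'' is not: $(f_R)(\cdot,re^{i\theta})$ is the $R_1$-dilation of $f(\cdot,R_2re^{i\theta})$, a function that moves with $R_2$, whereas \cite[Theorem~7.3.1]{EKMR2014} concerns dilations of a single fixed function; you would additionally need continuity of $\lambda\mapsto f(\cdot,\lambda)$ in the $\mathcal D(\mu_1)$-seminorm, or uniformity of the dilation convergence over the family $\{f(\cdot,R_2re^{i\theta})\}$, neither of which you establish. Second, even granting all of that, you must interchange $\lim_{R_1,R_2\to1^-}$ with $\sup_{0<r<1}$. Monotonicity in $r$ only converts the supremum into $\lim_{r\to1^-}$; pointwise-in-$r$ convergence to $0$ together with monotonicity does \emph{not} make the supremum tend to $0$: the functions $G_R(r)=\min\bigl\{1,\tfrac{1-R_1}{1-r}\bigr\}$ are increasing in $r$, tend to $0$ as $R_1\to1^-$ for each fixed $r$, yet $\sup_{0<r<1}G_R(r)=1$ for every $R_1$. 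This interchange is precisely the hard point of the lemma, and none of the tools you cite addresses it.

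The proof the paper has in mind (the one in \cite[Lemma~3.7]{BCG2024} and \cite[Theorem~7.3.1]{EKMR2014} that the reader is asked to adapt) avoids slices altogether and uses the Hilbert-space structure. By the dilation inequality $\mathcal D_{\mu_1,\mu_2}(f_R)\le\mathcal D_{\mu_1,\mu_2}(f)$ of the preceding lemma and the coefficient estimate $\|f_R\|_{H^2(\mathbb D^2)}\le\|f\|_{H^2(\mathbb D^2)}$, one has $\|f_R\|\le\|f\|$ in the norm \eqref{norm-on-d-mu12}. Since $\mathcal D(\mu_1,\mu_2)$ is a reproducing kernel Hilbert space (Lemma~\ref{D-mu-rkhs}) and $f_R\to f$ pointwise on $\mathbb D^2$, the bounded family $(f_R)$ converges weakly to $f$ (test against kernel functions, whose span is dense). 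Weak convergence together with $\limsup_{R}\|f_R\|\le\|f\|$ then forces norm convergence, since
\begin{equation*}
\|f_R-f\|^2=\|f_R\|^2-2\,\mathrm{Re}\inp{f_R}{f}+\|f\|^2\le 2\|f\|^2-2\,\mathrm{Re}\inp{f_R}{f}\longrightarrow 0,
\end{equation*}
and in particular $\mathcal D_{\mu_1,\mu_2}(f-f_R)\le\|f-f_R\|^2\to0$. You should replace the slice/dominated-convergence scheme with this argument; note also that patching your approach by first approximating $f$ by polynomials would be circular here, because polynomial density (Lemma~\ref{polynomials-are-dense}) is itself deduced from the lemma you are proving.
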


As an application of Lemma~\ref{f_R-f-estimate} we show that the set of polynomial is dense in $\mathcal D(\mu_1,\mu_2).$
\begin{lemma}\label{polynomials-are-dense}
Polynomials are dense in $\mathcal D(\mu_1,\mu_2).$
\end{lemma}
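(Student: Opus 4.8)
The plan is to prove density of polynomials by a two-step approximation, exploiting the two lemmas just established together with the reproducing kernel structure from Lemma~\ref{D-mu-rkhs}. Given $f \in \mathcal D(\mu_1,\mu_2)$ and $\varepsilon > 0$, I would first use Lemma~\ref{f_R-f-estimate} to choose $R = (R_1,R_2) \in (0,1)^2$ with $\mathcal D_{\mu_1,\mu_2}(f - f_R)$ small; I must also control the $H^2(\mathbb D^2)$ part of the norm \eqref{norm-on-d-mu12}, so I would simultaneously invoke the standard fact that dilates converge, $\|f - f_R\|_{H^2(\mathbb D^2)} \to 0$ as $R_1, R_2 \to 1^-$, to conclude $\|f - f_R\| < \varepsilon/2$ in the full norm. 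The gain is that $f_R$ extends holomorphically to a neighborhood of $\overline{\mathbb D}^2$, so its Taylor series converges uniformly on $\overline{\mathbb D}^2$.

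The second step is to approximate the dilate $f_R$ by the partial sums of its Taylor expansion, which are polynomials. Writing $f_R(z) = \sum_{\alpha \in \mathbb Z_+^2} a_\alpha z^\alpha$ with the series converging absolutely and uniformly on $\overline{\mathbb D}^2$, I would let $p_N$ denote the partial sum over $|\alpha| \le N$ and argue that $\|f_R - p_N\| \to 0$. Convergence of the $H^2(\mathbb D^2)$ component is immediate from uniform convergence (hence convergence of Taylor coefficients in $\ell^2$). For the Dirichlet seminorm $\mathcal D_{\mu_1,\mu_2}(f_R - p_N)$, I would pass through the slice formulation \eqref{Dirichlet-int-new}: on each slice the tail $f_R - p_N$ is a holomorphic function whose one-variable Dirichlet integral $\mathcal D_{\mu_j}$ can be bounded, and since $f_R$ is holomorphic across $\overline{\mathbb D}^2$ the convergence is uniform over the slicing parameter, letting me take the limit inside the $\sup_{0<r<1}$ (or $\lim_{r\to1^-}$) and the $\theta$-integral.

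The main obstacle is controlling the Dirichlet seminorm of the Taylor tail uniformly in the slice parameters $r$ and $\theta$. The cleanest route I would take is to avoid slice-by-slice estimates entirely and instead use the reproducing kernel Hilbert space structure: since $f_R$ is holomorphic on a neighborhood of $\overline{\mathbb D}^2$, it lies in $\mathcal D(\mu_1,\mu_2)$, and I would show that its Taylor partial sums converge to it \emph{in the norm of $\mathcal D(\mu_1,\mu_2)$}. For this one verifies that the monomials $\{z^\alpha\}$ form a complete system whose finite linear combinations approximate $f_R$; completeness follows from Lemma~\ref{D-mu-rkhs}, which gives $\bigvee\{\kappa(\cdot,w) : |w| < r\} = \mathcal D(\mu_1,\mu_2)$ and $\kappa(\cdot,0)=1$, so the span of reproducing kernels — and hence, by differentiating the kernel at the origin, the span of monomials — is dense. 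Combining the two steps via the triangle inequality $\|f - p_N\| \le \|f - f_R\| + \|f_R - p_N\| < \varepsilon$ completes the argument.
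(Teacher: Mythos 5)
Your first step is fine (and in fact slightly more careful than the paper's own write-up, which invokes Lemma~\ref{f_R-f-estimate} for the full norm even though that lemma controls only the seminorm and one must separately note $\|f-f_R\|_{H^2(\mathbb D^2)}\to 0$). The route you commit to in the second step, however, is circular. Asserting that the monomials ``form a complete system whose finite linear combinations approximate $f_R$'' \emph{is} the assertion that polynomials are dense --- the very statement to be proved --- and it does not follow from Lemma~\ref{D-mu-rkhs}. That lemma only says that no nonzero element of $\mathcal D(\mu_1,\mu_2)$ can vanish on a polydisc $\{|w|<r\}$; the kernels $\kappa(\cdot,w)$ are not polynomials, and differentiating the kernel at the origin produces the representing functions of the Taylor-coefficient functionals $f\mapsto \partial^\alpha f(0)$, \emph{not} the monomials $z^\alpha$. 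The two families coincide (up to normalization) only when the monomials form an orthogonal system, which fails here: by Proposition~\ref{inner-product-formula}, $\langle z_1^m z_2^n, z_1^m z_2^q\rangle_{\mathcal D(\mu_1,\mu_2)} = \langle z_2^n, z_2^q\rangle_{\mathcal D(\mu_2)}$, and this is in general nonzero for $n\neq q$ once $\mu_2(\mathbb D)>0$. More fundamentally, a reproducing kernel Hilbert space of holomorphic functions with bounded coordinate multipliers can contain all polynomials without their being dense, so no purely structural RKHS argument can close this step; some quantitative use of the weights $U_{\mu_j}$ is unavoidable.

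The gap is fixable, and the slice-type estimate you sketched and then abandoned is essentially the paper's proof. Since $f_R$ is holomorphic on a neighborhood of $\overline{\mathbb D}^2$, one can choose a polynomial $p$ (a Taylor partial sum will do) so that $f_R-p$ and both first-order partials $\partial_j(f_R-p)$ are uniformly small on $\overline{\mathbb D}^2$. Then directly from the definition \eqref{dirichlet-int} one gets
$\mathcal D_{\mu_1,\mu_2}(f_R-p)\le \|\partial_1(f_R-p)\|^2_{\infty,\overline{\mathbb D}^2}\int_{\mathbb D}U_{\mu_1}\,dA+\|\partial_2(f_R-p)\|^2_{\infty,\overline{\mathbb D}^2}\int_{\mathbb D}U_{\mu_2}\,dA$,
while the $H^2(\mathbb D^2)$ norm is dominated by the sup norm; the only nontrivial ingredient is the finiteness of $\int_{\mathbb D}U_{\mu_j}\,dA$ for $j=1,2$. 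In particular, the ``uniformity in the slice parameters'' obstacle you worried about never arises, because the derivative sup-norm bound is uniform over all slices at once. With this replacement for your second step, your two-step scheme becomes exactly the paper's argument.
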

\begin{proof}
 Let $f\in \mathcal D(\mu_1,\mu_2)$ and choose $\varepsilon>0.$ It is enough to show that there exists a polynomial $p$ such that $\|f-p\|_{\mathcal D(\mu_1,\mu_2)}<\varepsilon.$ By Lemma~\ref{f_R-f-estimate} there exists $R=(R_1,R_2)\in (0,1)^2$  such that $\|f-f_R\|_{\mathcal D(\mu_1,\mu_2)}<\varepsilon/2.$ Since $f_R$ is holomorphic in a neighborhood of $\overline{\mathbb D}^2,$ there exists a polynomial $p$ such that 
$$\|f_R-p\|_{\infty,\overline{\mathbb D}^2},\left\|\frac{\partial f_R}{\partial z_j}-\frac{\partial p}{\partial z_j}\right\|_{\infty,\overline{\mathbb D}^2}<\frac{\sqrt{\varepsilon}}{4\sqrt{M}},\quad j=1,2,$$
where  $M=\hbox{max }\{\int_{\mathbb D}U_{\mu_j}(w)dA(w):j=1,2\}+1.$ This together with the fact that the norm on $H^2(\mathbb D^2)$ is dominated by the norm $\|\cdot\|_{\infty,\overline{\mathbb D}^2}$ shows that $\|f_R-p\|_{\mathcal D(\mu_1,\mu_2)}<\varepsilon/2.$ Thus using triangle inequality we get that $\|f-p\|_{\mathcal D(\mu_1,\mu_2)}<\varepsilon.$ Hence the proof.
\end{proof}

This next lemma is very crucial to prove the boundedness of the multiplication tuple $\mathscr M_z=(\mathscr M_{z_1},\mathscr M_{z_2}).$
\begin{lemma}\label{I-zf-estimate}
    Let $f\in H^2(\mathbb D^2).$ Then
    \beqn
     I_{\mu_1,\mu_2}(z_1f)&=&\sup_{0<r<1}\int_{0}^{2\pi}\int_{\mathbb D}|f(\zeta_1,re^{i\theta})|^2d\mu_1(\zeta_1)\frac{d\theta}{2\pi} + I_{\mu_1,\mu_2}(f),\\
     I_{\mu_1,\mu_2}(z_2f)&=&\sup_{0<r<1}\int_{0}^{2\pi}\int_{\mathbb D}|f(re^{i\theta},\zeta_2)|^2d\mu_2(\zeta_2)\frac{d\theta}{2\pi} + I_{\mu_1,\mu_2}(f).
    \eeqn
\end{lemma}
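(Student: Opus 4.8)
The plan is to reduce the whole computation to a single slicewise identity for the one–variable local Dirichlet integral and then integrate. First I would record the elementary identity
$$D_\zeta(zg)=|g(\zeta)|^2+D_\zeta(g),\qquad \zeta\in\mathbb{D},\ g\in H^2(\mathbb{D}),$$
which is proved in one line. Writing $h=\frac{g-g(\zeta)}{z-\zeta}\in H^2(\mathbb{D})$, one has $zg-\zeta g(\zeta)=(z-\zeta)g+\zeta\bigl(g-g(\zeta)\bigr)$, hence $\frac{zg-\zeta g(\zeta)}{z-\zeta}=g(\zeta)+zh$; since the constant $g(\zeta)$ is orthogonal to $zH^2(\mathbb{D})$ and $\mathscr M_z$ is an isometry on $H^2(\mathbb{D})$, taking $H^2$–norms yields the identity. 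I would also note the trivial homogeneity $D_\zeta(cg)=|c|^2D_\zeta(g)$ for a scalar $c$, which is immediate from the definition \eqref{local-diri-int}.

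Next, fix $f\in H^2(\mathbb{D}^2)$. By the slice lemma \cite[Lemma~3.2]{BCG2024} every slice $f(\cdot,re^{i\theta})$ and $f(re^{i\theta},\cdot)$ lies in $H^2(\mathbb{D})$, so both identities apply slicewise. For the $\mu_1$–term of $I_{\mu_1,\mu_2}(z_1f)$ the relevant slice is $(z_1f)(\cdot,re^{i\theta})=z_1\,f(\cdot,re^{i\theta})$, and the first identity gives
$$D_{\zeta_1}\bigl((z_1f)(\cdot,re^{i\theta})\bigr)=|f(\zeta_1,re^{i\theta})|^2+D_{\zeta_1}\bigl(f(\cdot,re^{i\theta})\bigr).$$
For the $\mu_2$–term the slice is $(z_1f)(re^{i\theta},\cdot)=re^{i\theta}\,f(re^{i\theta},\cdot)$, in which $re^{i\theta}$ is constant in the $z_2$–variable, so homogeneity gives
$$D_{\zeta_2}\bigl((z_1f)(re^{i\theta},\cdot)\bigr)=r^2\,D_{\zeta_2}\bigl(f(re^{i\theta},\cdot)\bigr).$$

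Finally I would integrate these against $d\mu_1\frac{d\theta}{2\pi}$ and $d\mu_2\frac{d\theta}{2\pi}$ and take suprema over $r$. The key is that each resulting $r$–integral is nondecreasing: the quantity $\int_0^{2\pi}\!\int_{\mathbb{D}}|f(\zeta_1,re^{i\theta})|^2d\mu_1\frac{d\theta}{2\pi}$ is increasing by \cite[Lemma~1.1]{BCG2024}, and the local Dirichlet integrals $\int_0^{2\pi}\!\int_{\mathbb{D}}D_{\zeta_j}(\cdot)d\mu_j\frac{d\theta}{2\pi}$ are increasing too, as expanding the holomorphic function $\frac{f(z_1,w)-f(\zeta_1,w)}{z_1-\zeta_1}$ in powers of $w$ exhibits each such integral (for fixed $\zeta_1$) as a sum of nonnegative terms $c\,r^{2m}$, and integration in $\mu_1$ preserves monotonicity. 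Since a supremum over $r$ of a sum of nondecreasing functions is the sum of their suprema (each attained as $r\to1^-$), the two suprema in the $\mu_1$–term separate into $\sup_{0<r<1}\int_0^{2\pi}\!\int_{\mathbb{D}}|f(\zeta_1,re^{i\theta})|^2d\mu_1(\zeta_1)\frac{d\theta}{2\pi}$ plus the $\mu_1$–part of $I_{\mu_1,\mu_2}(f)$, while the $\mu_2$–term contributes exactly the $\mu_2$–part of $I_{\mu_1,\mu_2}(f)$; here the spurious factor $r^2\to1$ is harmless because $\sup_r r^2c_r=\sup_r c_r$ for nondecreasing $c_r\ge0$. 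Adding the two terms produces the first asserted equality, and the argument for $z_2f$ is identical after interchanging the two variables.

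I expect the main obstacle to be the bookkeeping of the supremum–versus–sum interchange, that is, establishing the monotonicity in $r$ of the local Dirichlet integrals so that $\sup_r$ can be distributed across the sum and the extra factor $r^2$ in the $\mu_2$–term can be absorbed without loss; the slicewise algebraic identities themselves are routine.
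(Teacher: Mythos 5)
Your proof is correct and takes essentially the same route as the paper: your slicewise identity $D_\zeta(zg)=|g(\zeta)|^2+D_\zeta(g)$ is precisely the paper's key computation \eqref{wf-r}, which is then integrated against $d\mu_j\,\frac{d\theta}{2\pi}$ and the suprema separated term by term. If anything, you are more explicit than the paper about the two points it handles silently, namely the factor $r^2$ coming from the slice $(z_1f)(re^{i\theta},\cdot)=re^{i\theta}f(re^{i\theta},\cdot)$ and the monotonicity in $r$ that justifies distributing $\sup_{0<r<1}$ across the sum.
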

\begin{proof}
     For each $r\in(0,1)$ and $\theta\in [0,2\pi]$  define $f_{r,\theta}(w):=f(w,re^{i\theta}),$ $w\in \mathbb D,$ then $f_{r,\theta}\in H^2(\mathbb D)$ (see \cite[Lemma~3.2]{BCG2024}). Since $H^2(\mathbb D)$ is closed under the multiplication of the coordinate function $w$ so $wf_{r,\theta} \in H^2(\mathbb D).$ Fixing $\zeta \in\mathbb D$ we know that for each $g\in H^2(\mathbb D),$ $\frac{g-g(\zeta)}{w-\zeta} \in H^2(\mathbb D).$ In particular, $g=wf_{r,\theta}$ gives $\frac{wf_{r,\theta}-(wf_{r,\theta})(\zeta)}{w-\zeta} \in H^2(\mathbb D)$ and 
    \begin{align*}
     \left \| \frac{wf_{r,\theta}-(wf_{r,\theta})(\zeta)}{w-\zeta}\right \|^2_{H^2(\mathbb D)}=\left \| f_{r,\theta}(\zeta)+w\frac{f_{r,\theta}-f_{r,\theta}(\zeta)}{w-\zeta}\right \|^2_{H^2(\mathbb D)}.
    \end{align*}
    As the constant functions are orthogonal to $wH^2(\mathbb D)$ in $H^2(\mathbb D)$ so the above equation becomes 
    \beq\label{wf-r}
     \left \| \frac{wf_{r,\theta}-(wf_{r,\theta})(\zeta)}{w-\zeta}\right \|^2_{H^2}&=&|f_{r,\theta}(\zeta)|^2+\left \|\frac{f_{r,\theta}-f_{r,\theta}(\zeta)}{w-\zeta}\right \|^2_{H^2}\\
     &=&|f(\zeta,re^{i\theta})|^2+\left \|\frac{f(\cdot,re^{i\theta})-f(\zeta,re^{i\theta})}{w-\zeta}\right \|^2_{H^2}. \notag
    \eeq
    Now \eqref{I-mu-formula} together with \eqref{local-diri-int} implies
   \begin{align}\label{I-mu-zf}
    &I_{\mu_1,\mu_2}(z_1f) \notag\\
    &=\sup_{0<r<1}\int_{0}^{2\pi}\int_{\mathbb D}\Big \|\frac{(z_1f)(\cdot,re^{i\theta})-(z_1f)(\zeta_1,re^{i\theta})}{z_1-\zeta_1}\Big \|^2_{H^2(\mathbb D)}d\mu_1(\zeta_1)\frac{d\theta}{2\pi} \notag \\
    &+ \sup_{0<r<1}\int_{0}^{2\pi}\int_{\mathbb D} \Big\|\frac{(z_1f)(re^{i\theta},\cdot)-(z_1f)(re^{i\theta},\zeta_2)}{z_2-\zeta_2}\Big\|^2_{H^2(\mathbb D)}d\mu_2(\zeta_2)\frac{d\theta}{2\pi}  \notag\\
    &\overset{\eqref{wf-r}}= \sup_{0<r<1}\int_{0}^{2\pi}\int_{\mathbb D}|f(\zeta_1,re^{i\theta})|^2d\mu_1(\zeta_1)\frac{d\theta}{2\pi}  \notag \\
    & + \sup_{0<r<1}\int_{0}^{2\pi}\int_{\mathbb D}\Big\|\frac{f(\cdot,re^{i\theta})-f(\zeta_1,re^{i\theta})}{z_1-\zeta_1}\Big \|^2_{H^2(\mathbb D)}d\mu_1(\zeta_1)\frac{d\theta}{2\pi} \notag \\
    & +\sup_{0<r<1}\int_{0}^{2\pi}\int_{\mathbb D} \Big \|\frac{f(re^{i\theta},\cdot)-f(re^{i\theta},\zeta_2)}{z_2-\zeta_2}\Big\|^2_{H^2(\mathbb D)}d\mu_2(\zeta_2)\frac{d\theta}{2\pi} \notag \\
    &\overset{\eqref{I-mu-formula}}=\sup_{0<r<1}\int_{0}^{2\pi}\int_{\mathbb D}|f(\zeta_1,re^{i\theta})|^2d\mu_1(\zeta_1)\frac{d\theta}{2\pi} + I_{\mu_1,\mu_2}(f).
   \end{align}
   Similarly on can derive the expression involving $I_{\mu_1,\mu_2}(z_2f).$ 
\end{proof}

Here we show that the coordinate functions $z_1$ and $z_2$ are multipliers of $\mathcal D(\mu_1,\mu_2).$
\begin{lemma}\label{boundedness of M_z}
Let $\mu_1,\mu_2 \in M_+(\overline{\mathbb D}).$ Then $\mathscr M_{z_1}$ and $\mathscr M_{z_2}$ are bounded linear operators on $\mathcal D(\mu_1,\mu_2).$
\end{lemma}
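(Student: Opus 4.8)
The plan is to treat $\mathscr M_{z_1}$ in detail; the argument for $\mathscr M_{z_2}$ is identical after interchanging the two coordinates and the roles of $\mu_1,\mu_2$. Since $z_1$ is inner on $\mathbb D^2$, multiplication by $z_1$ is isometric on $H^2(\mathbb D^2)$, so $\|z_1 f\|^2_{H^2(\mathbb D^2)}=\|f\|^2_{H^2(\mathbb D^2)}$ and the $H^2$-part of the norm causes no trouble. Thus it suffices to bound the Dirichlet seminorm $\mathcal D_{\mu_1,\mu_2}(z_1 f)$ by a constant multiple of $\|f\|^2$, which simultaneously shows $z_1 f\in\mathcal D(\mu_1,\mu_2)$ and yields the operator bound.

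First I would establish the inequality
\[
\mathcal D_{\mu_1,\mu_2}(z_1 f)\le \sup_{0<r<1}\int_0^{2\pi}\!\!\int_{\overline{\mathbb D}}|f(\zeta_1,re^{i\theta})|^2\,d\mu_1(\zeta_1)\frac{d\theta}{2\pi}+\mathcal D_{\mu_1,\mu_2}(f).
\]
Using the $\int_{\overline{\mathbb D}}$-decomposition of $\mathcal D_{\mu_1,\mu_2}$ proved in the Proposition above, I split $\mathcal D_{\mu_1,\mu_2}(z_1f)$ into a $\mu_1$-term built from $D_{\zeta_1}((z_1f)(\cdot,re^{i\theta}))$ and a $\mu_2$-term built from $D_{\zeta_2}((z_1f)(re^{i\theta},\cdot))$. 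For the $\mu_1$-term the slice is $w\mapsto wf(w,re^{i\theta})$, and the computation \eqref{wf-r} (which uses only the definition \eqref{local-diri-int} and the orthogonality of constants to $wH^2(\mathbb D)$, hence remains valid for every $\zeta_1\in\overline{\mathbb D}$ at which the relevant boundary value exists) gives $D_{\zeta_1}((z_1f)(\cdot,re^{i\theta}))=|f(\zeta_1,re^{i\theta})|^2+D_{\zeta_1}(f(\cdot,re^{i\theta}))$. For the $\mu_2$-term the slice is $z_2\mapsto re^{i\theta}f(re^{i\theta},z_2)$, so by homogeneity $D_{\zeta_2}(cg)=|c|^2D_{\zeta_2}(g)$ one has $D_{\zeta_2}((z_1f)(re^{i\theta},\cdot))=r^2D_{\zeta_2}(f(re^{i\theta},\cdot))\le D_{\zeta_2}(f(re^{i\theta},\cdot))$. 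Splitting off the $|f|^2$ contribution by subadditivity of the supremum and re-assembling the two $D_\zeta(f)$ integrals into $\mathcal D_{\mu_1,\mu_2}(f)$ via the Proposition yields the displayed inequality.

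The crucial step is to dominate the new term by the norm. Here I would use the uniform pointwise bound: for $g\in H^2(\mathbb D)$ and every $\zeta\in\overline{\mathbb D}$,
\[
|g(\zeta)|^2\le 2\|g\|^2_{H^2(\mathbb D)}+8\,D_\zeta(g).
\]
This follows by writing $g-g(\zeta)=(z-\zeta)h$ with $\|h\|^2_{H^2(\mathbb D)}=D_\zeta(g)$, so that $|g(\zeta)|=\|g-(z-\zeta)h\|_{H^2(\mathbb D)}\le\|g\|_{H^2(\mathbb D)}+2\|h\|_{H^2(\mathbb D)}$ because $\|(z-\zeta)h\|_{H^2(\mathbb D)}\le\|h\|_{H^2(\mathbb D)}+|\zeta|\,\|h\|_{H^2(\mathbb D)}\le 2\|h\|_{H^2(\mathbb D)}$; the bound is trivial when $D_\zeta(g)=\infty$. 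Applying it with $g=f(\cdot,re^{i\theta})$, $\zeta=\zeta_1$, then integrating $d\mu_1(\zeta_1)$ over $\overline{\mathbb D}$ and $\tfrac{d\theta}{2\pi}$ and taking $\sup_r$, I bound the new term by $2\mu_1(\overline{\mathbb D})\|f\|^2_{H^2(\mathbb D^2)}+8\,\mathcal D_{\mu_1,\mu_2}(f)$, using the Parseval identity $\sup_{0<r<1}\int_0^{2\pi}\|f(\cdot,re^{i\theta})\|^2_{H^2(\mathbb D)}\tfrac{d\theta}{2\pi}=\|f\|^2_{H^2(\mathbb D^2)}$ together with $\sup_{0<r<1}\int_0^{2\pi}\int_{\overline{\mathbb D}}D_{\zeta_1}(f(\cdot,re^{i\theta}))\,d\mu_1\tfrac{d\theta}{2\pi}\le\mathcal D_{\mu_1,\mu_2}(f)$ from the Proposition. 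Combining everything gives $\|z_1f\|^2\le C\|f\|^2$ with $C=\max\{1+2\mu_1(\overline{\mathbb D}),\,9\}$, and the bound for $\mathscr M_{z_2}$ follows by symmetry.

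I expect the main obstacle to be precisely this third step. The naive estimate from the reproducing kernel of $H^2(\mathbb D)$ degenerates like $(1-|\zeta_1|^2)^{-1}$ and is useless near $\mathbb T$, where $\mu_1$ may carry mass; the uniform bound above, valid up to and including the boundary circle and phrased through the local Dirichlet integral, is exactly what allows the term $\int_{\overline{\mathbb D}}|f(\zeta_1,\cdot)|^2\,d\mu_1$ to be absorbed into $\|f\|^2$.
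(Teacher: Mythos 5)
Your proposal is correct, but it follows a genuinely different route from the paper. The paper splits the Dirichlet integral as $\mathcal D_{\mu_1,\mu_2}=I_{\mu_1,\mu_2}+B_{\mu_1,\mu_2}$ (interior versus boundary parts of the measures), handles the boundary part $B_{\mu_1,\mu_2}(z_1f)$ by citing the harmonic-weight estimate from the proof of \cite[Lemma~3.4]{BCG2024}, and handles the interior part via the exact identity of Lemma~\ref{I-zf-estimate} together with Aleman's division trick: writing $f-f(0,\cdot)=z_1g$ and using \eqref{h-zeta-estimate} to control $\sup_r\int\!\!\int_{\mathbb D}|f(\zeta_1,re^{i\theta})|^2\,d\mu_1\,\frac{d\theta}{2\pi}$; it then concludes boundedness abstractly via the closed graph theorem in the reproducing kernel Hilbert space $\mathcal D(\mu_1,\mu_2)$. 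You avoid the $I/B$ splitting and the citation of \cite{BCG2024} entirely: your uniform pointwise estimate $|g(\zeta)|^2\le 2\|g\|^2_{H^2(\mathbb D)}+8\,D_\zeta(g)$, valid for all $\zeta\in\overline{\mathbb D}$, treats interior and boundary mass of $\mu_1$ simultaneously and yields an explicit operator bound $C=\max\{1+2\mu_1(\overline{\mathbb D}),9\}$, so the closed graph theorem is not needed. What your route buys is a self-contained, quantitative proof; what the paper's route buys is that every boundary-point subtlety is delegated to previously established results. Two caveats you should make explicit. First, at points $\zeta\in\mathbb T$ your two key steps --- the identity $D_\zeta(wg)=|g(\zeta)|^2+D_\zeta(g)$ (orthogonality of constants to $wH^2(\mathbb D)$) and the factorization $g-g(\zeta)=(z-\zeta)h$ with $h\in H^2(\mathbb D)$ and $\|h\|^2_{H^2(\mathbb D)}=D_\zeta(g)$ --- require knowing that finiteness of the boundary integral in \eqref{local-diri-int} forces the difference quotient to lie in $H^2(\mathbb D)$; this is the local Douglas formula of Richter--Sundberg (see \cite[Proposition~2.2]{RS1991}, \cite[Theorem~2.1]{EKKMR2016}), implicit in the paper's convention for $D_\zeta$ on $\mathbb T$, and should be cited rather than asserted as automatic. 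Second, you apply the Proposition (the $\int_{\overline{\mathbb D}}$-decomposition) to $z_1f$ before knowing $z_1f\in\mathcal D(\mu_1,\mu_2)$; this is harmless because that identity holds for every $f\in H^2(\mathbb D^2)$ with both sides allowed to be $+\infty$, but a remark to that effect is needed for the logic to be airtight.
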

\begin{proof}
 Let $f\in \mathcal D(\mu_1,\mu_2).$ So both $I_{\mu_1,\mu_2}(f)$ and  $B_{\mu_1,\mu_2}(f)$ are finite.  From the proof of \cite[Lemma~3.4]{BCG2024}, there exists constant $C \geq 1$ such that
\beqn
B_{\mu_1,\mu_2}(z_1f) \leq C \Big ( \|f\|^2_{ H^2(\mathbb D^2)}+ B_{\mu_1,\mu_2}(f) \Big) < \infty.
\eeqn
Now we show that $I_{\mu_1,\mu_2}(z_1f)<\infty.$ In view of Lemma~\ref{I-zf-estimate} it is enough to show that 
\beq\label{require-int}
\sup_{0<r<1}\int_{0}^{2\pi}\int_{\mathbb D}|f(\zeta_1,re^{i\theta})|^2d\mu_1(\zeta_1)\frac{d\theta}{2\pi}<\infty.
\eeq
The following idea is motivated from \cite[Proposition~1.6]{A1993}. 
Let $h\in H^2(\mathbb D^2)$ and assume that $h(0,z_2)=0$ for all $z_2\in \mathbb D.$ By the division property of $H^2(\mathbb D^2)$ (see \cite[Remark~4.2]{BCG2024}) there exists $g\in H^2(\mathbb D^2)$ such that $h(z_1,z_2)=z_1g(z_1,z_2)$ for all $(z_1,z_2)\in \mathbb D^2.$ Then for each $r\in (0,1),$ $\theta \in [0,2\pi]$ and $\zeta_1\in \mathbb D,$ using \eqref{wf-r} we get
\begin{align}\label{h-zeta-estimate}
\left \| \frac{h(\cdot,re^{i\theta})-h(\zeta_1,re^{i\theta})}{z_1-\zeta_1}\right \|^2_{H^2}
&= |g(\zeta_1,re^{i\theta})|^2+\left \|\frac{g(\cdot,re^{i\theta})-g(\zeta_1,re^{i\theta})}{z_1-\zeta_1}\right \|^2_{H^2} \notag \\
&\geq  |g(\zeta_1,re^{i\theta})|^2  \notag \\
&\geq  |\zeta_1|^2|g(\zeta_1,re^{i\theta})|^2 \notag \\
&= |h(\zeta_1,re^{i\theta})|^2.
\end{align}
Thus, 
\begin{align} \label{f-in-L2-mu-1}
&\sup_{0<r<1}\int_{0}^{2\pi}\int_{\mathbb D}|f(\zeta_1,re^{i\theta})|^2d\mu_1(\zeta_1)\frac{d\theta}{2\pi} \notag \\
&=\sup_{0<r< 1}\int_{0}^{2\pi}\int_{\mathbb D}|f(\zeta_1,re^{i\theta})-f(0,re^{i\theta})+f(0,re^{i\theta})|^2d\mu_1(\zeta_1)\frac{d\theta}{2\pi} \notag \\
&\leq  2\sup_{0<r< 1}\int_{0}^{2\pi}\int_{\mathbb D}|f(\zeta_1,re^{i\theta})-f(0,re^{i\theta})|^2d\mu_1(\zeta_1)\frac{d\theta}{2\pi} \notag \\ 
& + 2\sup_{0<r< 1}\int_{0}^{2\pi}\int_{\mathbb D}|f(0,re^{i\theta})|^2d\mu_1(\zeta_1)\frac{d\theta}{2\pi} \notag \\
&\overset{\eqref{h-zeta-estimate}}\leq 2 \sup_{0<r< 1}\int_{0}^{2\pi}\int_{\mathbb D}\left \| \frac{f(\cdot,re^{i\theta})-f(\zeta_1,re^{i\theta})}{z_1-\zeta_1}\right \|^2_{H^2(\mathbb D)}d\mu_1(\zeta_1)\frac{d\theta}{2\pi} \notag \\ 
& + 2 \mu_1(\mathbb D) \sup_{0<r< 1}\int_{0}^{2\pi}|f(0,re^{i\theta})|^2\frac{d\theta}{2\pi}. 
\end{align}
Let us assume that $f(z_1,z_2)=\sum_{m,n\geq 0} a_{m,n}z_1^m z_2^n.$ As $f\in H^2(\mathbb D^2)$ by using dominated convergence theorem (see \cite[p.~88]{Ro1988})
\beq\label{Hardy-estimate}
\sup_{0<r< 1}\int_{0}^{2\pi}|f(0,re^{i\theta})|^2\frac{d\theta}{2\pi}
 = \sum_{n\geq 0} |a_{0,n}|^2 \leq  \|f\|^2_{ H^2(\mathbb D^2)}.
\eeq
Combining \eqref{f-in-L2-mu-1} with \eqref{I-mu-formula} and \eqref{Hardy-estimate} yields \eqref{require-int}.

Hence we conclude that $z_1f \in \mathcal D(\mu_1,\mu_2).$ Similarly, one can show show that $z_2f \in \mathcal D(\mu_1,\mu_2).$ Since $\mathcal D(\mu_1,\mu_2)$ is a reproducing kernel Hilbert space so using the closed graph theorem we conclude the result.
\end{proof}

Let $\kappa:\mathbb D\times \mathbb D\to \mathbb C$ denote the reproducing kernel of $\mathcal D(\mu_1,\mu_2).$
\begin{corollary}\label{joint-kernel}
For any $w\in \mathbb D^2,$ $\ker(\mathscr M_z-w) = \{0\}$ and $\ker(\mathscr M^*_z-w)$ is the one-dimensional space spanned by $\kappa(\cdot,\overline{w}).$
\end{corollary}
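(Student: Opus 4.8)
The plan is to treat the two assertions separately: the first is essentially formal, while the second rests on the reproducing kernel together with the polynomial density from Lemma~\ref{polynomials-are-dense}. For the first assertion, suppose $f\in\ker(\mathscr M_z-w)$, so in particular $(\mathscr M_{z_1}-w_1)f=0$. Since $\mathscr M_{z_1}$ is multiplication by $z_1$, this reads $(z_1-w_1)f\equiv 0$ as a holomorphic function on $\mathbb D^2$; as $z_1-w_1$ is not the zero function, $f\equiv 0$. Thus already $\ker(\mathscr M_{z_1}-w_1)=\{0\}$, and a fortiori the joint kernel is trivial.

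For the second assertion I would first record that $\kappa(\cdot,\overline w)$ is a joint eigenvector of $\mathscr M_z^*$ with eigenvalue $w$. Indeed, for every $f\in\mathcal D(\mu_1,\mu_2)$ and $j=1,2$, the reproducing property gives
\[
\langle f,\,\mathscr M_{z_j}^*\kappa(\cdot,\overline w)\rangle=\langle \mathscr M_{z_j}f,\,\kappa(\cdot,\overline w)\rangle=(z_jf)(\overline w)=\overline{w_j}\,f(\overline w)=\langle f,\,w_j\kappa(\cdot,\overline w)\rangle,
\]
so $\mathscr M_{z_j}^*\kappa(\cdot,\overline w)=w_j\kappa(\cdot,\overline w)$; moreover $\langle 1,\kappa(\cdot,\overline w)\rangle=1$ forces $\kappa(\cdot,\overline w)\neq 0$ and $\kappa(\overline w,\overline w)=\|\kappa(\cdot,\overline w)\|^2>0$. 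This yields $\operatorname{span}\{\kappa(\cdot,\overline w)\}\subseteq\ker(\mathscr M_z^*-w)$.

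For the reverse inclusion, take any $g\in\ker(\mathscr M_z^*-w)$. A direct computation from $\mathscr M_{z_j}^*g=w_jg$ gives $\langle (\mathscr M_{z_j}-\overline{w_j})f,\,g\rangle=0$ for all $f$ and $j=1,2$, so $g\perp\operatorname{ran}(\mathscr M_{z_1}-\overline{w_1})+\operatorname{ran}(\mathscr M_{z_2}-\overline{w_2})$. Subtracting the eigenvector component, set $h:=g-\tfrac{g(\overline w)}{\kappa(\overline w,\overline w)}\kappa(\cdot,\overline w)$; then $h\in\ker(\mathscr M_z^*-w)$ and $h(\overline w)=0$. The crux is to show $h$ lies in the closed sum of those two ranges. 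Rather than solving the Gleason problem in $\mathcal D(\mu_1,\mu_2)$ analytically, I would exploit polynomial density: pick polynomials $p_n\to h$ in $\mathcal D(\mu_1,\mu_2)$; continuity of evaluation at $\overline w$ gives $p_n(\overline w)\to h(\overline w)=0$, so $q_n:=p_n-p_n(\overline w)\to h$ as well, with each $q_n$ a polynomial vanishing at $\overline w=(\overline{w_1},\overline{w_2})$. Elementary division then writes
\[
q_n(z)=(z_1-\overline{w_1})\,\frac{q_n(z_1,z_2)-q_n(\overline{w_1},z_2)}{z_1-\overline{w_1}}+(z_2-\overline{w_2})\,\frac{q_n(\overline{w_1},z_2)}{z_2-\overline{w_2}},
\]
where both quotients are again polynomials, hence belong to $\mathcal D(\mu_1,\mu_2)$. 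Thus $q_n\in\operatorname{ran}(\mathscr M_{z_1}-\overline{w_1})+\operatorname{ran}(\mathscr M_{z_2}-\overline{w_2})$, and since $h$ is orthogonal to each $q_n$, letting $n\to\infty$ yields $\|h\|^2=\lim_n\langle h,q_n\rangle=0$. Hence $h=0$, $g$ is a scalar multiple of $\kappa(\cdot,\overline w)$, and $\ker(\mathscr M_z^*-w)=\operatorname{span}\{\kappa(\cdot,\overline w)\}$.

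I expect the only delicate point to be the membership $h\in\overline{\operatorname{ran}(\mathscr M_{z_1}-\overline{w_1})+\operatorname{ran}(\mathscr M_{z_2}-\overline{w_2})}$. The natural temptation is to prove a full Gleason decomposition in $\mathcal D(\mu_1,\mu_2)$ with Dirichlet-integral estimates on the difference quotients, but as indicated this can be bypassed: combining Lemma~\ref{polynomials-are-dense} with the elementary division of a vanishing \emph{polynomial} reduces the required decomposition to a case where no analytic estimate is needed, and the orthogonality to the ranges then closes the argument.
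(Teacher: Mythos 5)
Your proposal is correct and is essentially the argument that the paper delegates to \cite[Corollary~3.9]{BCG2024}: injectivity of multiplication by $z_1-w_1$ on holomorphic functions gives the trivial joint kernel, the reproducing-kernel eigenvector computation gives one inclusion for the adjoint, and polynomial density (Lemma~\ref{polynomials-are-dense}) combined with elementary division of a polynomial vanishing at $\overline{w}$ gives the other. Your one cosmetic deviation --- subtracting the kernel component to reduce to showing $h=0$, instead of directly pairing an arbitrary polynomial $p=p(\overline w)+(z_1-\overline{w_1})p_1+(z_2-\overline{w_2})p_2$ against $g$ --- changes nothing substantive.
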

\begin{proof}
The proof goes the same as that of \cite[Corollary~3.9]{BCG2024}.
\end{proof}

The following lemma recovers a counterpart of \cite[Lemma~2.1]{R1987} for $\mathcal D(\mu).$ 
\begin{lemma}\label{gleason-d-mu}
For any $\mu \in M_+(\overline{\mathbb D}),$ $\mathcal D(\mu)$ has the Gleason property. 
\end{lemma}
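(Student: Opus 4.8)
The Gleason property for $\mathcal{D}(\mu)$ amounts to the following difference-quotient statement: for every $f\in\mathcal{D}(\mu)$ and every $\lambda\in\mathbb{D}$, the function $g_\lambda(z):=\frac{f(z)-f(\lambda)}{z-\lambda}$ again belongs to $\mathcal{D}(\mu)$. The plan is to control the two pieces of the norm \eqref{norm-d-mu} separately. That $g_\lambda\in H^2(\mathbb{D})$ with $\|g_\lambda\|_{H^2(\mathbb{D})}^2=D_\lambda(f)<\infty$ is the classical difference-quotient property of $H^2(\mathbb{D})$ at an interior point, so the whole issue is to show $\mathcal{D}_\mu(g_\lambda)<\infty$. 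By \eqref{d-zeta-int} this reduces to estimating the local Dirichlet integrals $D_\zeta(g_\lambda)$ well enough to integrate them against $\mu$.

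First I would dispose of the base point $\lambda=0$, which is the cleanest case and the one needed for the two-variable Gleason problem. Writing $g_0=\frac{f-f(0)}{z}$, so that $z\,g_0=f-f(0)$, the identity \eqref{wf-r} applied with $h=g_0$ gives $D_\zeta(f-f(0))=|g_0(\zeta)|^2+D_\zeta(g_0)$; since $D_\zeta$ is unchanged by subtracting a constant, this reads $D_\zeta(g_0)=D_\zeta(f)-|g_0(\zeta)|^2\le D_\zeta(f)$. Integrating against $\mu$ and using \eqref{d-zeta-int} yields $\mathcal{D}_\mu(g_0)\le\mathcal{D}_\mu(f)<\infty$, so $g_0\in\mathcal{D}(\mu)$.

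For a general $\lambda\in\mathbb{D}$ I would compute the difference quotient of $g_\lambda$ directly. Setting $q_\zeta(z):=\frac{f(z)-f(\zeta)}{z-\zeta}$ (so $g_\lambda=q_\lambda$ and $\|q_\zeta\|_{H^2(\mathbb{D})}^2=D_\zeta(f)$), a short manipulation of the second divided difference gives $\frac{g_\lambda(z)-g_\lambda(\zeta)}{z-\zeta}=\frac{1}{\zeta-\lambda}\bigl(q_\zeta-q_\lambda\bigr)$, whence by \eqref{local-diri-int}
$$D_\zeta(g_\lambda)=\frac{\|q_\zeta-q_\lambda\|_{H^2(\mathbb{D})}^2}{|\zeta-\lambda|^2}.$$
I would then split $\int_{\overline{\mathbb{D}}}D_\zeta(g_\lambda)\,d\mu(\zeta)$ at a fixed radius $\delta<1-|\lambda|$ around $\lambda$. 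On $\{|\zeta-\lambda|\ge\delta\}$ (which then contains all of $\mathbb{T}$, since $|\zeta-\lambda|\ge 1-|\lambda|$ there) the factor $|\zeta-\lambda|^{-2}$ is bounded and $\|q_\zeta-q_\lambda\|^2\le 2D_\zeta(f)+2D_\lambda(f)$, so this part is dominated by a constant times $\mathcal{D}_\mu(f)+D_\lambda(f)\,\mu(\overline{\mathbb{D}})<\infty$.

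The only delicate region, and the \emph{main obstacle}, is the near-diagonal piece $\{|\zeta-\lambda|<\delta\}$, where $|\zeta-\lambda|^{-2}$ blows up. The point is that $\frac{\|q_\zeta-q_\lambda\|^2}{|\zeta-\lambda|^2}=\bigl\|\tfrac{q_\zeta-q_\lambda}{\zeta-\lambda}\bigr\|_{H^2(\mathbb{D})}^2$ is exactly the squared $H^2$-norm of the second divided difference $f[\,\cdot\,,\zeta,\lambda]$, and the singularity is cancelled by the vanishing of $q_\zeta-q_\lambda$ as $\zeta\to\lambda$. To make this quantitative I would observe that $\zeta\mapsto q_\zeta$ is a holomorphic $H^2(\mathbb{D})$-valued map on $\mathbb{D}$, so a Cauchy estimate on a slightly larger disc bounds its difference quotient: $\sup_{|\zeta-\lambda|<\delta}\|f[\,\cdot\,,\zeta,\lambda]\|_{H^2(\mathbb{D})}\le C_{\lambda,\delta}<\infty$. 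Consequently the near-diagonal part is at most $C_{\lambda,\delta}^2\,\mu(\overline{\mathbb{D}})<\infty$. Adding the two pieces gives $\mathcal{D}_\mu(g_\lambda)<\infty$, hence $g_\lambda\in\mathcal{D}(\mu)$, which establishes the Gleason property.
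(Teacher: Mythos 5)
Your argument is correct, but it follows a genuinely different route from the paper's. The paper proves this lemma operator-theoretically: since $\mathscr M_z$ is cyclic on $\mathcal D(\mu)$, \cite[Proposition~1.1]{AW1990} gives $\dim\ker(\mathscr M_z^*-\overline{\lambda})\le 1$, so that kernel is spanned by the reproducing kernel $k(\cdot,\lambda)$; hence $h-h(\lambda)\perp k(\cdot,\lambda)$ lies in the closure of the range of $\mathscr M_z-\lambda$, and because $\mathscr M_z$ is $2$-concave, hence expansive by \cite[Lemma~1(a)]{R1988}, the operator $\mathscr M_z-\lambda$ is bounded below and has closed range, which yields $g\in\mathcal D(\mu)$ with $h-h(\lambda)=(z-\lambda)g$. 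You never use the operator $\mathscr M_z$ at all: you estimate $\int_{\overline{\mathbb D}}D_\zeta(g_\lambda)\,d\mu(\zeta)$ directly, via the exact identity $D_\zeta(g_0)=D_\zeta(f)-|g_0(\zeta)|^2$ at the origin, the divided-difference formula $D_\zeta(g_\lambda)=\|q_\zeta-q_\lambda\|_{H^2(\mathbb D)}^2/|\zeta-\lambda|^2$ in general, and a Cauchy estimate for the $H^2(\mathbb D)$-valued holomorphic map $\zeta\mapsto q_\zeta$ to kill the singularity near $\lambda$. The trade-off: the paper's proof is very short but leans on surrounding structure (cyclicity and complete hyperexpansivity of $\mathscr M_z$ on $\mathcal D(\mu)$, Richter's expansivity lemma) and is purely qualitative; yours is self-contained function theory, produces pointwise and quantitative control of the local Dirichlet integrals of the difference quotient, and would still work in situations where boundedness or expansivity of $\mathscr M_z$ has not yet been established.

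Three small points would make your write-up airtight. First, you apply \eqref{d-zeta-int} to $g_\lambda$ before knowing $g_\lambda\in\mathcal D(\mu)$; this is legitimate because the underlying theorem (\cite[Theorem~1.9, p.~74]{A1993}, cf. \cite[Proposition~2.2]{RS1991}) holds for every $g\in H^2(\mathbb D)$ with both sides possibly infinite, but the paper states \eqref{d-zeta-int} only for $g\in\mathcal D(\mu)$, so cite the general form. Second, for $\zeta\in\mathbb T$ your identities presuppose that the nontangential limit $f(\zeta)$ exists; by the convention following \eqref{local-diri-int}, $D_\zeta(f)=\infty$ when it does not, so your pointwise bounds hold trivially at such $\zeta$, and since $\int_{\overline{\mathbb D}}D_\zeta(f)\,d\mu<\infty$ these $\zeta$ form a $\mu$-null set in any case --- but this should be said. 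Third, the holomorphy of $\zeta\mapsto q_\zeta$ deserves one line of justification (e.g. $q_\zeta=(I-\zeta S^*)^{-1}S^*f$ for $|\zeta|<1$, where $S^*$ is the backward shift), since the entire near-diagonal estimate rests on it.
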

\begin{proof}
  Here $\mathscr M_z$ is cyclic on $\mathcal D(\mu)$ then for each $\lambda \in \mathbb D$ the $\dim \ker(\mathscr M^*_z-\overline{\lambda})$ is at most one ( see \cite[Proposition~1.1]{AW1990}). Let $k$ denote the reproducing kernel of $\mathcal D(\mu).$ Then $k(\cdot,\lambda) \in \ker(\mathscr M^*_z-\overline{\lambda})$ so $\ker(\mathscr M^*_z-\overline{\lambda})$ is spanned by $k(\cdot,\lambda).$ For any $h\in \mathcal D(\mu)$ by the reproducing property we know that $h-h(\lambda)$ is orthogonal to $k(\cdot,\lambda).$ That means  $h-h(\lambda)$ belongs the range closure of $(\mathscr M_z-\lambda).$ Since $\mathscr M_z$ is $2$-concave, by \cite[Lemma~1(a)]{R1988} $\mathscr M_z$ is expansive on $\mathcal D(\mu)$ so $(\mathscr M_z-\lambda)$ is bounded below and hence range of $(\mathscr M_z-\lambda)$ is closed. Thus there exists $g\in \mathcal D(\mu)$ such that $h(z)-h(\lambda)=(z-\lambda)g(z)$ for $z\in \mathbb D.$
\end{proof}
The next proposition shows that $\mathcal D(\mu_1,\mu_2)$ has the division property.
\begin{proposition}\label{division-prop}
Let $\mu_1,\mu_2\in M_+(\overline{\mathbb D}).$ Then $\mathcal D(\mu_1,\mu_2)$ has the division property.
\end{proposition}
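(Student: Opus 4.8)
The plan is to prove the two-variable analogue of the $H^2(\mathbb D^2)$ division property: if $f\in\mathcal D(\mu_1,\mu_2)$ satisfies $f(0,z_2)=0$ for all $z_2\in\mathbb D$, then the quotient $g$ determined by $f(z_1,z_2)=z_1g(z_1,z_2)$ again lies in $\mathcal D(\mu_1,\mu_2)$, and symmetrically for division by $z_2$. First I would invoke the division property of $H^2(\mathbb D^2)$ (\cite[Remark~4.2]{BCG2024}) to produce $g\in H^2(\mathbb D^2)$, so that the only thing left to verify is $\mathcal D_{\mu_1,\mu_2}(g)<\infty$. I would then split this Dirichlet integral as $\mathcal D_{\mu_1,\mu_2}(g)=I_{\mu_1,\mu_2}(g)+B_{\mu_1,\mu_2}(g)$ and bound the interior and boundary contributions separately against those of $f=z_1g$.

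For the interior term the estimate is essentially free. Lemma~\ref{I-zf-estimate} applied to $g$ gives $I_{\mu_1,\mu_2}(z_1g)=\sup_{0<r<1}\int_0^{2\pi}\int_{\mathbb D}|g(\zeta_1,re^{i\theta})|^2\,d\mu_1(\zeta_1)\frac{d\theta}{2\pi}+I_{\mu_1,\mu_2}(g)$. Since the supremum term is nonnegative and $I_{\mu_1,\mu_2}(z_1g)=I_{\mu_1,\mu_2}(f)<\infty$, I obtain at once $I_{\mu_1,\mu_2}(g)\le I_{\mu_1,\mu_2}(f)<\infty$.

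The boundary term is where the work lies. The identity \eqref{wf-r}, being a Hilbert-space computation using only that multiplication by $w$ is isometric on $H^2(\mathbb D)$ and that constants are orthogonal to $wH^2(\mathbb D)$, remains valid for $\zeta_1\in\mathbb T$ through radial limits; thus $D_{\zeta_1}(z_1h)=|h(\zeta_1)|^2+D_{\zeta_1}(h)$ on the circle as well. For the first summand of $B_{\mu_1,\mu_2}$ this gives $D_{\zeta_1}(g(\cdot,re^{i\theta}))\le D_{\zeta_1}(f(\cdot,re^{i\theta}))$, so the corresponding piece of $B_{\mu_1,\mu_2}(g)$ is dominated by that of $B_{\mu_1,\mu_2}(f)$. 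For the second summand the first-variable slice is a constant, $f(re^{i\theta},\cdot)=re^{i\theta}g(re^{i\theta},\cdot)$, whence $D_{\zeta_2}(f(re^{i\theta},\cdot))=r^2D_{\zeta_2}(g(re^{i\theta},\cdot))$. Writing $A(r)=\int_0^{2\pi}\int_{\mathbb T}D_{\zeta_2}(g(re^{i\theta},\cdot))\,d\mu_2(\zeta_2)\frac{d\theta}{2\pi}$, which is increasing in $r$ by \cite[Lemma~1.1]{BCG2024}, I would replace each $\sup_{0<r<1}$ by $\lim_{r\to1^-}$ and use $r^2\to1$ to conclude $\sup_{0<r<1}r^2A(r)=\lim_{r\to1^-}A(r)$, i.e.\ the second piece of $B_{\mu_1,\mu_2}(g)$ equals the second piece of $B_{\mu_1,\mu_2}(f)$. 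Combining the two summands yields $B_{\mu_1,\mu_2}(g)\le B_{\mu_1,\mu_2}(f)<\infty$, and together with the interior estimate, $g\in\mathcal D(\mu_1,\mu_2)$.

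The main obstacle I anticipate is the bookkeeping in the boundary term: one must justify the boundary version of \eqref{wf-r} at points of $\mathbb T$, where $D_\zeta$ is defined through radial limits and may equal $+\infty$, and, in the second summand, handle the factor $r^2$ correctly by exploiting monotonicity in $r$ to pass from the supremum to the limit before letting $r^2\to1$. Once this is settled, division by $z_2$ follows by interchanging the roles of the two variables, completing the proof.
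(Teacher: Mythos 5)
The central problem with your proposal is that it proves only division at the origin, whereas Proposition~\ref{division-prop} asserts the division property of $\mathcal D(\mu_1,\mu_2)$ at an \emph{arbitrary} point: the paper's proof starts from $f\in\mathcal D(\mu_1,\mu_2)$ with $f(\lambda,z_2)=0$ for all $z_2$, where $\lambda\in\mathbb D$ is arbitrary, and shows $f/(z_1-\lambda)\in\mathcal D(\mu_1,\mu_2)$ (this is also the sense in which $H^2(\mathbb D^2)$ has the division property in \cite[Lemma~4.1]{BCG2024}, which both you and the paper invoke). Your argument is genuinely tied to $\lambda=0$: Lemma~\ref{I-zf-estimate}, the identity \eqref{wf-r}, and your boundary version $D_{\zeta}(wh)=|h(\zeta)|^2+D_{\zeta}(h)$ all rest on the fact that constants are orthogonal to $wH^2(\mathbb D)$, i.e.\ on $\mathscr M_w$ being an isometry of $H^2(\mathbb D)$ with wandering subspace $\mathbb C$. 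For $w-\lambda$ with $\lambda\neq 0$ one instead gets $\frac{(w-\lambda)h-(\zeta-\lambda)h(\zeta)}{w-\zeta}=h+(\zeta-\lambda)\frac{h-h(\zeta)}{w-\zeta}$, whose two summands are not orthogonal, so the clean additive formula is lost; the naive reverse-triangle substitute $D_\zeta(h)\le |\zeta-\lambda|^{-2}\bigl(\sqrt{D_\zeta((w-\lambda)h)}+\|h\|_{H^2}\bigr)^2$ degenerates as $\zeta\to\lambda$ and cannot be integrated against $\mu_1|_{\mathbb D}$ near $\lambda$. So the general case is not a routine modification of what you wrote; it needs a different mechanism. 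The paper supplies it by working slice-wise: the one-variable Gleason property of $\mathcal D(\mu_1)$ (Lemma~\ref{gleason-d-mu}) guarantees each slice $g(\cdot,re^{i\theta})$ lies in $\mathcal D(\mu_1)$, and then expansivity of $\mathscr M_w$ on $\mathcal D(\mu_1)$ gives the uniform lower bound $\|(w-\lambda)g(\cdot,re^{i\theta})\|_{\mathcal D(\mu_1)}\ge (1-|\lambda|)^2\|g(\cdot,re^{i\theta})\|_{\mathcal D(\mu_1)}$, which is then integrated in $\theta$ and supremized in $r$ (with a separate $(s-|\lambda|)^2$ argument in the second variable).

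That said, what you did prove is correct, and for the case you treat it is a nice alternative to the paper's route: your interior estimate via the exact identity of Lemma~\ref{I-zf-estimate}, the boundary extension of \eqref{wf-r} through radial limits (with both sides $+\infty$ when the limit fails to exist), and the handling of the factor $r^2$ by monotonicity are all sound. Two remarks: your appeal to \cite[Lemma~1.1]{BCG2024} for the monotonicity of $A(r)$ is not literally licensed, since $D_{\zeta_2}$ at boundary points is not of the form $\int_{\mathbb D}|\cdot|^2\,d\nu$; you first need \eqref{d-zeta-int} (applied to $\mu_2|_{\mathbb T}$) to rewrite $\int_{\mathbb T}D_{\zeta_2}(g(re^{i\theta},\cdot))\,d\mu_2(\zeta_2)$ as $\int_{\mathbb D}|\partial_2 g(re^{i\theta},z_2)|^2U_{\mu_2|_{\mathbb T}}(z_2)\,dA(z_2)$, after which the cited monotonicity applies — a gloss comparable to ones the paper itself makes. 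More importantly, to repair the proof you must either rerun your scheme with the paper's slice-wise Gleason-plus-expansivity argument for $\lambda\neq 0$, or restate your claim as the special case of division by the coordinate functions (which, incidentally, is the only case actually used in Lemma~\ref{gleason-proposition}, but is weaker than the proposition as stated).
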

\begin{proof}
Let $f\in \mathcal D(\mu_1,\mu_2)$ and $\lambda \in \mathbb D$ such that $f(\lambda,z_2)=0$ for each $z_2 \in \mathbb D.$ We are required to show $\frac{f}{z_1-\lambda} $ belongs $\mathcal D(\mu_1,\mu_2).$ Since $H^2(\mathbb D^2)$ has the division property (see \cite[Lemma~4.1]{BCG2024}), there exists $g\in H^2(\mathbb D^2)$ such that $f(z_1,z_2)=(z_1-\lambda)g(z_1,z_2)$ for $z_1,z_2\in \mathbb D.$ Now it boils down to show $g \in \mathcal D(\mu_1,\mu_2).$ From \eqref{f-r-theta-in-d-mu} it is clear that $(z_1-\lambda)g(\cdot,re^{i\theta})\in \mathcal D(\mu_1)$ and $(re^{i\theta}-\lambda)g(re^{i\theta},\cdot)\in \mathcal D(\mu_2)$ for every $r\in(0,1)$ and almost every $\theta \in [0,2\pi].$ Clearly, for every $r\in(0,1)$ and almost every $\theta \in [0,2\pi],$ (by Lemma~\ref{gleason-d-mu}) $g(\cdot,re^{i\theta})\in \mathcal D(\mu_1)$ and  $g(re^{i\theta},\cdot)\in \mathcal D(\mu_2).$ The multiplication operator $\mathscr M_w$ by the coordinate function $w$ is expansive on $\mathcal D(\mu_j),$ $j=1,2.$ So
\beqn
\|g(\cdot,re^{i\theta})\|_{\mathcal D(\mu_1)}\leq \|wg(\cdot,re^{i\theta})\|_{\mathcal D(\mu_1)}.
\eeqn
The rest of the proof is similar to \cite[Proof of Theorem~2.2]{BCG2024}. For the sake of completeness, we are providing the full argument. Note that 
\beqn
\|(w-\lambda)g(\cdot,re^{i\theta})\|_{\mathcal D(\mu_1)} &\geq& (1-|\lambda|)^2 \|g(\cdot,re^{i\theta})\|_{\mathcal D(\mu_1)}\\ &\geq & (1-|\lambda|)^2 \mathcal D_{\mu_1}(g(\cdot,re^{i\theta})).
\eeqn
Integrating both sides with respect to $\theta$
\beqn
&&(1-|\lambda|)^2\int_0^{2\pi}\mathcal D_{\mu_1}(g(\cdot,re^{i\theta})) \frac{d\theta}{2\pi}\\
&&\leq \int_0^{2\pi}\|(w-\lambda)g(\cdot,re^{i\theta})\|^2_{H^2(\mathbb D)}\frac{d\theta}{2\pi}+ \int_0^{2\pi}\mathcal D_{\mu_1}((w-\lambda)g(\cdot,re^{i\theta}))\frac{d\theta}{2\pi}.
\eeqn
By using \cite[Lemma~3.2]{BCG2024} and \eqref{Dirichlet-int-new} we get that
\begin{align*}
 (1-|\lambda|)^2\int_0^{2\pi}\mathcal D_{\mu_1}(g(\cdot,re^{i\theta})) \frac{d\theta}{2\pi}\leq \|(z_1-\lambda)g\|^2_{H^2(\mathbb D^2)} + \mathcal D_{\mu_1,\mu_2}((z_1-\lambda)g).
\end{align*}
Taking supremum over $0<r<1$ on the above inequality gives
\beq\label{g-in-d-mu-1}
\sup_{0<r<1}\int_0^{2\pi}\mathcal D_{\mu_1}(g(\cdot,re^{i\theta})) \frac{d\theta}{2\pi} <\infty.
\eeq
We already have $$\sup_{0<r<1}\int_0^{2\pi} \mathcal D_{\mu_2}((re^{i\theta}-\lambda)g(re^{i\theta},\cdot)) \frac{d\theta}{2\pi} <\infty.$$
So for any $s\in(|\lambda|,1),$
\beqn
&&\sup_{0<r<1}\int_0^{2\pi} \mathcal D_{\mu_2}((re^{i\theta}-\lambda)g(re^{i\theta},\cdot)) \frac{d\theta}{2\pi} \\
&\geq & \int_0^{2\pi} \mathcal D_{\mu_2}((se^{i\theta}-\lambda)g(se^{i\theta},\cdot)) \frac{d\theta}{2\pi} \\
&\geq & (s-|\lambda|)^2  \int_0^{2\pi}  \mathcal D_{\mu_2}(g(se^{i\theta},\cdot)) \frac{d\theta}{2\pi}.
\eeqn
Now taking limit $s\to 1$ gives 
\beq\label{g-in-d-mu-2}
\lim_{s\to 1} \int_0^{2\pi}  \mathcal D_{\mu_2}(g(se^{i\theta},\cdot)) \frac{d\theta}{2\pi} <\infty.
\eeq
Since \cite[Lemma~1.1]{BCG2024} suggest that we can replace the limit by supremum so combining \eqref{g-in-d-mu-1} and \eqref{g-in-d-mu-2} yields $g\in \mathcal D(\mu_1,\mu_2).$

Similarly one can start with the assumption that $f(z_1,\lambda)=0$ for all $z_1 \in \mathbb D$ and show that $\frac{f}{z_2-\lambda} \in \mathcal D(\mu_1,\mu_2).$
\end{proof}
As an application of the Proposition~\ref{division-prop}, we have the following:
\begin{lemma}\label{gleason-proposition}
Gleason problem for $\mathcal D(\mu_1,\mu_2)$ has solution over $\{(\lambda_1,\lambda_2)\in \mathbb D^2: \lambda_1\lambda_2=0\}.$
\end{lemma}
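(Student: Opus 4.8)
The plan is to solve the Gleason problem by a telescoping decomposition that reduces everything to two applications of the division property established in Proposition~\ref{division-prop}. Since the roles of $z_1$ and $z_2$ are symmetric, it suffices to treat a point of the form $\lambda=(0,\lambda_2)$ with $\lambda_2\in\mathbb D$; the case $(\lambda_1,0)$ is identical after interchanging the two coordinates. Thus, given $f\in\mathcal D(\mu_1,\mu_2)$, I must produce $g_1,g_2\in\mathcal D(\mu_1,\mu_2)$ with
\[
f(z_1,z_2)-f(0,\lambda_2)=z_1\,g_1(z_1,z_2)+(z_2-\lambda_2)\,g_2(z_1,z_2),
\]
and the natural splitting to exploit is
\[
f(z_1,z_2)-f(0,\lambda_2)=\big[f(z_1,z_2)-f(0,z_2)\big]+\big[f(0,z_2)-f(0,\lambda_2)\big].
\]

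The crux of the argument, and the step I expect to be the main obstacle, is to show that the partial evaluation $F(z_1,z_2):=f(0,z_2)$ again belongs to $\mathcal D(\mu_1,\mu_2)$; once this is known, both brackets are differences of elements of the space that vanish on an appropriate slice, and Proposition~\ref{division-prop} applies directly. To verify $F\in\mathcal D(\mu_1,\mu_2)$, I would expand $f=\sum_{m\ge 0}z_1^m f_m(z_2)$ with $f_m(z_2)=\sum_{n\ge0}a_{m,n}z_2^n$, so that $f(0,z_2)=f_0(z_2)$ and $\partial_1 F\equiv 0$. Since $F$ is independent of $z_1$, only the second summand of $\mathcal D_{\mu_1,\mu_2}$ in \eqref{dirichlet-int} survives and equals $\mathcal D_{\mu_2}(f_0)$. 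Integrating $|\partial_2 f(re^{i\theta},z_2)|^2$ in $\theta$ and using the orthogonality of $\{e^{im\theta}\}$ gives, for every $r\in(0,1)$, the pointwise bound $\int_0^{2\pi}|\partial_2 f(re^{i\theta},z_2)|^2\frac{d\theta}{2\pi}=\sum_{m\ge0}r^{2m}|f_m'(z_2)|^2\ge |f_0'(z_2)|^2$. Multiplying by the nonnegative weight $U_{\mu_2}$ and integrating over $\mathbb D$ (Tonelli) shows $\mathcal D_{\mu_2}(f_0)$ is dominated by the second term of $\mathcal D_{\mu_1,\mu_2}(f)$, which is finite; hence $f_0\in\mathcal D(\mu_2)$ and $F\in\mathcal D(\mu_1,\mu_2)$.

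With $F\in\mathcal D(\mu_1,\mu_2)$ in hand, I would finish as follows. The first bracket $f-F$ lies in $\mathcal D(\mu_1,\mu_2)$ and satisfies $(f-F)(0,z_2)=0$ for all $z_2$, so Proposition~\ref{division-prop} (division by $z_1-0$) yields $g_1\in\mathcal D(\mu_1,\mu_2)$ with $f(z_1,z_2)-f(0,z_2)=z_1 g_1(z_1,z_2)$. For the second bracket, note that constants belong to $\mathcal D(\mu_1,\mu_2)$ (indeed $\kappa(\cdot,0)=1$ by Lemma~\ref{D-mu-rkhs}), so $G:=F-f(0,\lambda_2)\in\mathcal D(\mu_1,\mu_2)$; moreover $G(z_1,\lambda_2)=0$ for all $z_1$, and a second application of Proposition~\ref{division-prop} (now division by $z_2-\lambda_2$) gives $g_2\in\mathcal D(\mu_1,\mu_2)$ with $f(0,z_2)-f(0,\lambda_2)=(z_2-\lambda_2)g_2(z_1,z_2)$. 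Adding the two identities produces the required representation, and since $\lambda_1=0$ the factor $z_1$ equals $z_1-\lambda_1$; this solves the Gleason problem at every point of $\{(\lambda_1,\lambda_2)\in\mathbb D^2:\lambda_1\lambda_2=0\}$.
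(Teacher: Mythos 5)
Your proof is correct, and it follows the same telescoping strategy as the paper: split $f-f(\lambda)$ through the slice function and handle the two brackets separately, using Proposition~\ref{division-prop} on the bracket that vanishes on the coordinate slice through $0$. The one genuine difference is in the second bracket. The paper regards $f(z_1,0)$ (respectively $f(0,z_2)$) as an element of the one-variable space $\mathcal D(\mu_1)$ (resp.\ $\mathcal D(\mu_2)$), invokes the one-variable Gleason property (Lemma~\ref{gleason-d-mu}) to write $f(z_1,0)-f(\lambda,0)=(z_1-\lambda)v(z_1)$, and then lifts $v$ to a function on $\mathbb D^2$ constant in the second variable; you instead keep everything in two variables and apply Proposition~\ref{division-prop} a \emph{second} time, at the general point $\lambda_2$, to the function $G(z_1,z_2)=f(0,z_2)-f(0,\lambda_2)$, which vanishes on the slice $z_2=\lambda_2$. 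This is legitimate because the proposition (including its symmetric form) is stated for arbitrary $\lambda\in\mathbb D$, not just $\lambda=0$; note, though, that its proof rests on Lemma~\ref{gleason-d-mu}, so the two routes are close cousins rather than independent. Your version is slightly more uniform (one proposition applied twice), while the paper's version only ever needs the two-variable division property at the origin slice and otherwise works in one variable. A further point in your favour: the membership $F(z_1,z_2):=f(0,z_2)\in\mathcal D(\mu_1,\mu_2)$, which the paper dismisses as ``clear from the Definition,'' is actually the place where an argument is needed, and your Parseval computation $\int_0^{2\pi}|\partial_2 f(re^{i\theta},z_2)|^2\,\tfrac{d\theta}{2\pi}=\sum_{m\ge0}r^{2m}|f_m'(z_2)|^2\ge|f_0'(z_2)|^2$, combined with Tonelli against the nonnegative weight $U_{\mu_2}$, supplies exactly the missing justification that $\mathcal D_{\mu_2}(f_0)\le\mathcal D_{\mu_1,\mu_2}(f)$.
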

\begin{proof}
Let $f\in \mathcal D(\mu_1,\mu_2)$ and  $\lambda \in \mathbb D.$ It is clear from the Definition~\ref{definition-diri} that 
\beqn
\mathcal D_{\mu_1,\mu_2}(f(\cdot,0))=\mathcal D_{\mu_1}(f(\cdot,0))\leq \mathcal D_{\mu_1,\mu_2}(f), \\
\mathcal D_{\mu_1,\mu_2}(f(0,\cdot))=\mathcal D_{\mu_2}(f(0,\cdot))\leq \mathcal D_{\mu_1,\mu_2}(f).
\eeqn
 Consider the function $h(z_1,z_2)=f(z_1,z_2)-f(z_1,0),$ $(z_1,z_2)\in \mathbb D^2.$ Then $h\in \mathcal D(\mu_1,\mu_2).$ By Proposition~\ref{division-prop}, there exists $f_1\in \mathcal D(\mu_1,\mu_2)$ such that  
 \beq\label{first-part}
  h(z_1,z_2)=f(z_1,z_2)-f(z_1,0)=(z_2-0)f_1(z_1,z_2), \quad (z_1,z_2)\in \mathbb D^2.
 \eeq
 Since $\mathcal D_{\mu_1}(f(\cdot,0))<\infty$ so $f(\cdot,0) \in \mathcal D(\mu_1).$  Applying Lemma~\ref{gleason-d-mu} to  $ \mathcal D(\mu_1)$ we get that for each $\lambda \in \mathbb D,$  there exists $v \in \mathcal D(\mu_1)$ such that 
\beq \label{second-part} 
 f(z_1,0)-f(\lambda,0)=(z_1-\lambda)v(z_1), \quad z_1 \in \mathbb D.
 \eeq
Now adding \eqref{first-part} and \eqref{second-part} gives us
 \beqn
 f(z_1,z_2)-f(\lambda,0)=(z_1-\lambda)v(z_1) + z_2 f_1(z_1,z_2),\quad z_1,z_2\in \mathbb D.
 \eeqn
 Defining $f_2(z_1,z_2)=v(z_1),$  $z_1,z_2\in \mathbb D$ shows that $f_2 \in \mathcal D(\mu_1,\mu_2).$ Thus the Gleason problem has solution at $(\lambda,0)$ for every $\lambda \in \mathbb D.$
 
 Similarly, starting with $H(z_1,z_2)=f(z_1,z_2)-f(0,z_2)$ for $z_1,z_2\in \mathbb D,$ one can show that the Gleason problem can be solved on $\{(0,\lambda): \lambda \in \mathbb D\}.$
\end{proof}
\begin{theorem}
Let $\mu_1, \mu_2 \in M_+(\overline{\mathbb D}).$ Then the followings hold:
\begin{enumerate}[label=$(\alph*)$]
\item  the commuting pair $\mathscr M_z=(\mathscr M_{z_1},\mathscr M_{z_2})$ is cyclic with cyclic vector $1,$
\item Gleason problem can be solved for $\mathcal D(\mu_1,\mu_2)$ on an open neighborhood of $\{(\lambda_1,\lambda_2)\in \mathbb D^2: \lambda_1\lambda_2=0\}.$
\end{enumerate}
\end{theorem}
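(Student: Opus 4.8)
The plan is to deduce both statements from results already established in this section, the only genuinely new input being an openness argument in part $(b)$.

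For $(a)$, I would note that $\mathscr M_{z_1}^{m}\mathscr M_{z_2}^{n}1 = z_1^{m}z_2^{n}$ for all $m,n\in\mathbb Z_+$, so the closed linear span of $\{\mathscr M_{z_1}^{m}\mathscr M_{z_2}^{n}1:m,n\in\mathbb Z_+\}$ is exactly the closure of the polynomials in $\mathcal D(\mu_1,\mu_2)$. Since $1=\kappa(\cdot,0)\in\mathcal D(\mu_1,\mu_2)$ by Lemma~\ref{D-mu-rkhs} and $\mathscr M_{z_1},\mathscr M_{z_2}$ are bounded by Lemma~\ref{boundedness of M_z}, this span is a legitimate closed subspace; by Lemma~\ref{polynomials-are-dense} it is all of $\mathcal D(\mu_1,\mu_2)$. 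Hence $1$ is a cyclic vector, which gives $(a)$.

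For $(b)$, the idea is to rephrase Gleason solvability at a point as a surjectivity statement and then to use that surjectivity of a bounded operator is stable under small perturbations. For $\lambda=(\lambda_1,\lambda_2)\in\mathbb D^2$ I would define the operator $\Phi_\lambda$ from $\mathbb C\oplus\mathcal D(\mu_1,\mu_2)\oplus\mathcal D(\mu_1,\mu_2)$ into $\mathcal D(\mu_1,\mu_2)$ by
\[
\Phi_\lambda(c,g_1,g_2)=c\cdot 1+(\mathscr M_{z_1}-\lambda_1)g_1+(\mathscr M_{z_2}-\lambda_2)g_2,
\]
which is bounded by Lemma~\ref{boundedness of M_z}. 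Evaluating $\Phi_\lambda(c,g_1,g_2)$ at the point $\lambda$ returns $c$ (the two remaining summands vanish there), so $\Phi_\lambda$ is onto if and only if every $f\in\mathcal D(\mu_1,\mu_2)$ admits a representation $f=f(\lambda)\cdot 1+(z_1-\lambda_1)g_1+(z_2-\lambda_2)g_2$; that is, $\Phi_\lambda$ is surjective precisely when the Gleason problem is solvable at $\lambda$.

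It then remains to combine two facts. First, $\lambda\mapsto\Phi_\lambda$ is norm-continuous, indeed $\|\Phi_\lambda-\Phi_{\lambda'}\|\le|\lambda_1-\lambda_1'|+|\lambda_2-\lambda_2'|$. Second, the collection of surjective bounded operators between two Banach spaces is open in the operator-norm topology, a standard consequence of the open mapping theorem: a surjective operator maps the unit ball onto a set containing a ball $\delta B$, and a successive-approximation (Neumann-series) argument then shows that every operator within distance $\delta/2$ is still onto. By Lemma~\ref{gleason-proposition}, $\Phi_{\lambda_0}$ is surjective for every $\lambda_0$ in $\{\lambda_1\lambda_2=0\}$; by the openness just recalled there is an open ball $U_{\lambda_0}\subseteq\mathbb D^2$ about each such $\lambda_0$ on which $\Phi_\lambda$ stays surjective, hence on which Gleason is solvable, and $\bigcup_{\lambda_0}U_{\lambda_0}$ is the desired open neighborhood. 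I expect the openness-of-surjectivity step to be the main obstacle; introducing the constant summand $c\cdot 1$ is precisely what keeps the target space the \emph{fixed} Hilbert space $\mathcal D(\mu_1,\mu_2)$ rather than the $\lambda$-dependent kernel of evaluation at $\lambda$, which is what makes the perturbation estimate clean.
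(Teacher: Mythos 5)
Your part $(a)$ coincides with the paper's proof: both identify the closed linear span of $\{\mathscr M_{z_1}^m\mathscr M_{z_2}^n 1: m,n\in\mathbb Z_+\}$ with the closure of the polynomials and conclude by combining Lemma~\ref{boundedness of M_z} with Lemma~\ref{polynomials-are-dense}. Your part $(b)$ is correct but follows a genuinely different route. The paper works through multivariable Fredholm theory: solvability on $A=\{\lambda_1\lambda_2=0\}$ (Lemma~\ref{gleason-proposition}) gives closed range of the row operator $T_\lambda=[\mathscr M_{z_1}-\lambda_1\ \ \mathscr M_{z_2}-\lambda_2]$, Corollary~\ref{joint-kernel} shows the cokernel is one-dimensional and the joint kernel is trivial, so $\mathscr M_z-\lambda$ is Fredholm by \cite[Lemma~4.4]{BCG2024}; hence $A$ avoids the closed essential spectrum $\sigma_e(\mathscr M_z)$, which yields an open set $V\supseteq A$ disjoint from $\sigma_e(\mathscr M_z)$, and \cite[Lemma~5.1]{BCG2024} then solves the Gleason problem on $V$. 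You instead linearize the problem directly: Gleason solvability at $\lambda$ is equivalent to surjectivity of $\Phi_\lambda(c,g_1,g_2)=c\cdot 1+(\mathscr M_{z_1}-\lambda_1)g_1+(\mathscr M_{z_2}-\lambda_2)g_2$ (this equivalence is sound, since evaluation at $\lambda$ forces $c=f(\lambda)$, and $\Phi_\lambda$ is bounded by Lemma~\ref{boundedness of M_z}), the map $\lambda\mapsto\Phi_\lambda$ is $1$-Lipschitz in operator norm, and surjectivity is an open condition in operator norm. That last fact is standard and your open-mapping-plus-successive-approximation proof of it is fine; in the present Hilbert space setting it is even quicker to note that $\Phi_\lambda$ is onto if and only if $\Phi_\lambda^*$ is bounded below, which is manifestly stable under small perturbations. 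What your argument buys is self-containedness and elementarity: no Koszul complex, no essential spectrum, and no appeal to \cite[Lemmas~4.4 and~5.1]{BCG2024}, whose roles (a Fredholmness criterion and solvability of the Gleason problem off the essential spectrum) are replaced by your two elementary steps. What the paper's route buys is the intermediate conclusion, of independent interest, that $\{\lambda_1\lambda_2=0\}\cap\sigma_e(\mathscr M_z)=\emptyset$, and a canonical choice of neighborhood (the complement of $\sigma_e(\mathscr M_z)$) rather than a union of perturbation balls of unspecified radii.
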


\begin{proof} $(a)$ Combining Lemmas~\ref{boundedness of M_z},~\ref{polynomials-are-dense} yields the result. 

$(b)$ Lemma~\ref{gleason-proposition} suggests that Gleason problem has solution over $A=\{(\lambda_1,\lambda_2)\in \mathbb D^2: \lambda_1\lambda_2=0\}.$ So for each $\lambda=(\lambda_1,\lambda_2)\in A$ the row operator $T_\lambda:=[\mathscr M_{z_1}-\lambda_1 \; \mathscr M_{z_2}-\lambda_2]$ has closed range. With the help of Corollary~\ref{joint-kernel}
\beqn
\dim \big(\mathcal D(\mu_1,\mu_2)/T_\lambda (\mathcal D(\mu_1,\mu_2)\oplus \mathcal D(\mu_1,\mu_2))\big)&=&\dim \ker T^*_\lambda \\
&=&\dim \ker(\mathscr M^*_z-\overline{\lambda})=1.
\eeqn
Now using \cite[Lemma~4.4]{BCG2024} and the fact that the joint kernel $\ker (\mathscr M_z-\lambda)=\{0\},$ we conclude that the pair $\mathscr M_z-\lambda$ is Fredholm. Thus $A$ is in the complement of the essential spectrum $\sigma_e(\mathscr M_z)$ of $\mathscr M_z.$ Since $\sigma_e(\mathscr M_z)$ is closed, there exists an open subset $V$ of $\mathbb D^2\setminus \sigma_e(\mathscr M_z)$ containing $A.$ Applying \cite[Lemma~5.1]{BCG2024} completes the proof.
\end{proof}

\section{A Representation Theorem}\label{representaion-theorem}
Let $\mu$ be a finite positive Borel measure on $\overline{\mathbb D}.$ For two non-negative integers $i$ and $j,$ the {\it $(i,j)$-th moment of $\mu$} (see \cite{ A1975, D1951, H1935, M1988}) is defined as $$\hat{\mu}\{i,j\}=\int_{\overline{\mathbb D}}(\overline\zeta)^{i} \zeta^{j} d\mu(\zeta).$$
\begin{proposition}
Let $i$ and $j$ be two nonnegative integers and $i\leq j.$ Then  
\beqn
\inp{z^i}{z^j}_{\mathcal D(\mu)}=\delta(i,j)+\sum_{k=0}^{i-1} \hat{\mu}\{j-k-1,i-k-1\},
\eeqn
where $\delta(\cdot,\cdot)$ denotes the two variable Kronecker delta function.
\end{proposition}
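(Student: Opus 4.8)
The plan is to split the $\mathcal D(\mu)$ inner product into its two defining pieces and evaluate each on the monomials. By polarizing the norm \eqref{norm-d-mu}, for polynomials $p,q$ one has
\[
\langle p, q\rangle_{\mathcal D(\mu)} = \langle p, q\rangle_{H^2(\mathbb D)} + \mathcal B_\mu(p,q),
\]
where $\mathcal B_\mu$ denotes the sesquilinear form obtained by polarizing the Dirichlet integral $\mathcal D_\mu$. Since the monomials $\{z^n\}_{n\geq 0}$ are orthonormal in $H^2(\mathbb D)$, the first term contributes exactly $\delta(i,j)$. It therefore remains to show that $\mathcal B_\mu(z^i, z^j) = \sum_{k=0}^{i-1}\hat\mu\{j-k-1, i-k-1\}$.

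For the second term I would use the integral representation \eqref{d-zeta-int}. Polarizing it—integration in $\zeta$ commutes with polarization, and each $D_\zeta$ is itself the squared $H^2$-norm of a divided difference—gives
\[
\mathcal B_\mu(z^i, z^j) = \int_{\overline{\mathbb D}} \left\langle \frac{z^i - \zeta^i}{z - \zeta}, \frac{z^j - \zeta^j}{z-\zeta}\right\rangle_{H^2(\mathbb D)} d\mu(\zeta).
\]
The key elementary computation is the finite geometric expansion
\[
\frac{z^i - \zeta^i}{z - \zeta} = \sum_{k=0}^{i-1} \zeta^{\, i-1-k} z^k,
\]
and likewise for $j$. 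Substituting these and again invoking orthonormality of $\{z^k\}$ in $H^2(\mathbb D)$ collapses the resulting double sum: only the diagonal terms survive, and since $i \leq j$ the surviving index ranges over $k = 0, \dots, i-1$. This leaves
\[
\left\langle \frac{z^i - \zeta^i}{z-\zeta}, \frac{z^j - \zeta^j}{z-\zeta}\right\rangle_{H^2(\mathbb D)} = \sum_{k=0}^{i-1} \zeta^{\, i-1-k}\, \overline{\zeta}^{\, j-1-k}.
\]

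Integrating this term by term against $d\mu$ and matching with the moment definition $\hat\mu\{m,n\} = \int_{\overline{\mathbb D}} (\overline\zeta)^m \zeta^n \, d\mu(\zeta)$ identifies each summand as $\hat\mu\{j-k-1, i-k-1\}$, which closes the computation; the empty sum when $i=0$ correctly reflects $\mathcal D_\mu(1)=0$. There is no genuine obstacle here, only two points requiring care: justifying that polarization passes through the integral in \eqref{d-zeta-int}, which is immediate since the integrand is a bona fide $H^2$ inner product depending measurably on $\zeta$ and the monomials lie in $\mathcal D(\mu)$; and keeping the conjugation on the $z^j$-factor so that the two moment indices appear in precisely the stated order.
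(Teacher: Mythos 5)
Your proof is correct and follows essentially the same route as the paper: polarize the norm identity \eqref{norm-d-mu} together with the local Dirichlet integral formula \eqref{d-zeta-int}, use orthonormality of the monomials in $H^2(\mathbb D)$ for the $\delta(i,j)$ term, expand the divided differences geometrically, and identify the surviving diagonal sums with the moments $\hat{\mu}\{j-k-1,i-k-1\}$. Your added remarks on passing polarization through the integral and on the index range when $i\leq j$ only make explicit what the paper leaves implicit.
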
 
\begin{proof}
Substituting \eqref{d-zeta-int} in \eqref{norm-d-mu} gives
$$\|g\|^2_{\mathcal D(\mu)}=\|g\|^2_{H^2(\mathbb D)}+\int_{\overline{\mathbb D}} \mathcal D_\zeta(g)d\mu(\zeta), \, g\in \mathcal D(\mu).$$
Using polarization identity in the above equation gives 
\begin{align*}
 \inp{z^i}{z^j}_{\mathcal D(\mu)}&=  \inp{z^i}{z^j}_{H^2(\mathbb D)}+ \int_{\overline{\mathbb D}} \left \langle \frac{z^i-\zeta^i}{z-\zeta}, \frac{z^j-\zeta^j}{z-\zeta} \right \rangle_{H^2(\mathbb D)} d\mu(\zeta)  \\
&= \delta(i,j)+ \int_{\overline{\mathbb D}}  \sum_{k=0}^{i-1} \zeta^{i-1-k}(\overline\zeta)^{j-1-k} d\mu(\zeta)\notag \\
&= \delta(i,j)+\sum_{k=0}^{i-1} \int_{\overline{\mathbb D}} \zeta^{i-1-k}(\overline\zeta)^{j-1-k} d\mu(\zeta) \notag\\
&= \delta(i,j)+ \sum_{k=0}^{i-1} \hat{\mu}\{j-k-1,i-k-1\}. \notag
\end{align*}
Hence the result.
\end{proof}
Next, we derive a formula for the inner product of monomials in $\mathcal D(\mu_1,\mu_2).$
 \begin{proposition}\label{inner-product-formula}
Let $\mu_1,\mu_2 \in M_+(\overline {\mathbb D})$ and $m,n,p,q\in \mathbb N$ then 
\beqn
\inp{z^{m}_1z^n_2}{z^{p}_1z^q_2}_{\mathcal D(\mu_1, \mu_2)} = \begin{cases}
0 & \mbox{if}~ m\neq p, ~n \neq q, \\
\inp{z^n_2}{z^q_2}_{\mathcal D(\mu_2)} & \mbox{if}~ m=p, ~n \neq q, \\
\inp{z^{m}_1}{z^{p}_1}_{\mathcal D(\mu_1)} & \mbox{if}~ m \neq p, ~n = q, \\
\|z_1^m\|^2_{\mathcal D(\mu_1)} +\|z_2^n\|^2_{\mathcal D(\mu_2)}-1 & \mbox{if}~ m= p, ~n = q.
\end{cases}
\eeqn
\end{proposition}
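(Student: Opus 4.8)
The plan is to split the inner product according to the norm decomposition $\|f\|^2=\|f\|^2_{H^2(\mathbb D^2)}+\mathcal D_{\mu_1,\mu_2}(f)$ and evaluate the two contributions separately on monomials. Since $\mathcal D(\mu_1,\mu_2)$ is a Hilbert space (Lemma~\ref{D-mu-rkhs}), both $\|\cdot\|^2_{H^2(\mathbb D^2)}$ and $\mathcal D_{\mu_1,\mu_2}(\cdot)$ are restrictions of quadratic forms, so I may recover the associated sesquilinear forms by polarization and compute $\inp{z_1^m z_2^n}{z_1^p z_2^q}_{\mathcal D(\mu_1,\mu_2)}$ termwise. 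The $H^2(\mathbb D^2)$ contribution is immediate: the monomials $\{z_1^m z_2^n\}$ form an orthonormal basis of $H^2(\mathbb D^2),$ so $\inp{z_1^m z_2^n}{z_1^p z_2^q}_{H^2(\mathbb D^2)}=\delta(m,p)\delta(n,q).$

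For the Dirichlet contribution I would work from \eqref{Dirichlet-int-new} and, using the monotonicity in $r$ recorded after \eqref{Dirichlet-int-new} (via \cite[Lemma~1.1]{BCG2024}), replace each $\sup_{0<r<1}$ by $\lim_{r\to1^-};$ the polarized sesquilinear form then equals the limit of the polarized slice-integrals. The key computation is on the slices: for $f=z_1^m z_2^n$ one has $f(\cdot,re^{i\theta})=r^n e^{in\theta}\,z_1^m$ and $f(re^{i\theta},\cdot)=r^m e^{im\theta}\,z_2^n.$ Denoting by $\mathcal B_{\mu_j}$ the sesquilinear form with $\mathcal B_{\mu_j}(h,h)=\mathcal D_{\mu_j}(h),$ the first slice-integrand becomes $r^{n+q}e^{i(n-q)\theta}\mathcal B_{\mu_1}(z_1^m,z_1^p).$ Integrating in $\theta$ and invoking $\int_0^{2\pi}e^{i(n-q)\theta}\tfrac{d\theta}{2\pi}=\delta(n,q)$ kills the cross terms, leaving $r^{n+q}\delta(n,q)\,\mathcal B_{\mu_1}(z_1^m,z_1^p),$ whose $r\to1^-$ limit is $\delta(n,q)\,\mathcal B_{\mu_1}(z_1^m,z_1^p);$ the second slice contributes $\delta(m,p)\,\mathcal B_{\mu_2}(z_2^n,z_2^q)$ by symmetry.

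Assembling the pieces gives
\begin{align*}
\inp{z_1^m z_2^n}{z_1^p z_2^q}_{\mathcal D(\mu_1,\mu_2)}&=\delta(m,p)\delta(n,q)+\delta(n,q)\,\mathcal B_{\mu_1}(z_1^m,z_1^p)\\
&\quad+\delta(m,p)\,\mathcal B_{\mu_2}(z_2^n,z_2^q),
\end{align*}
and I would finish by rewriting $\mathcal B_{\mu_1}(z_1^m,z_1^p)=\inp{z_1^m}{z_1^p}_{\mathcal D(\mu_1)}-\delta(m,p)$ together with the analogue for $\mu_2,$ both of which follow from \eqref{norm-d-mu}. The four cases then drop out by inspection of which Kronecker deltas survive, the diagonal case $m=p,\,n=q$ producing $1+(\|z_1^m\|^2_{\mathcal D(\mu_1)}-1)+(\|z_2^n\|^2_{\mathcal D(\mu_2)}-1).$ The only point that genuinely needs care — the main obstacle — is legitimizing the polarization through the $\sup_{0<r<1};$ this is exactly why I first pass to $\lim_{r\to1^-},$ after which additivity of the limit makes the sesquilinear form well defined and the termwise computation rigorous, while the $\theta$-integration supplies the orthogonality that generates the case structure.
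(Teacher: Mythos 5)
Your proposal is correct and follows essentially the same route as the paper's proof: polarize the Dirichlet seminorm (after replacing $\sup_{0<r<1}$ by $\lim_{r\to 1^-}$ via \cite[Lemma~1.1]{BCG2024}), use the slice structure of monomials so that the $\theta$-integration produces the Kronecker deltas, and then convert the one-variable polarized Dirichlet integrals into $\mathcal D(\mu_j)$-inner products via \eqref{diri-int-one-variable} and \eqref{norm-d-mu}. The only cosmetic difference is that you polarize the slice formulation \eqref{Dirichlet-int-new} rather than the definition \eqref{dirichlet-int} directly, which amounts to the identical computation.
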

\begin{proof}
Using the polarization identity on \eqref{dirichlet-int} gives
\begin{align*}
&\inp{z^{m}_1z^n_2}{z^{p}_1z^q_2}_{\mathcal D(\mu_1, \mu_2)}=\delta(m,p)\delta(n,q)\\
&+  \lim_{r\to 1} \int_0^{2\pi} \int_{\mathbb D} r^{n+q}e^{i(n-q)\theta}mpz_1^{m-1}(\overline z_1)^{p-1} U_{\mu_1}(z_1)dA(z_1) \frac{d\theta}{2\pi}\\
&+  \lim_{r\to 1} \int_0^{2\pi} \int_{\mathbb D} r^{m+p}e^{i(m-p)\theta}nqz_2^{n-1}(\overline z_2)^{q-1} U_{\mu_2}(z_2)dA(z_2) \frac{d\theta}{2\pi}\\
&= \delta(n,q)\delta(m,p)+ \delta(n,q) \int_{\mathbb D} mz_1^{m-1}p(\overline z_1)^{p-1} U_{\mu_1}(z_1)dA(z_1)\\
&+  \delta(m,p) \int_{\mathbb D} nz_2^{n-1}q(\overline z_2)^{q-1} U_{\mu_2}(z_2)dA(z_2).
\end{align*}
Rest follows from \eqref{diri-int-one-variable} and polarisation identity.
\end{proof}
An immediate corollary of the of the above proposition is the following:
\begin{corollary}\label{wandering-s}
For $\mu_1,\mu_2\in M_+(\overline{\mathbb D}),$ the subspace spanned by the constant vector $1$ in  $\mathcal D(\mu_1,\mu_2)$ is a wandering subspace for $\mathscr M_z$ on $\mathcal D(\mu_1,\mu_2).$ 
\end{corollary}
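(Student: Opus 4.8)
The plan is to unwind the definition of a wandering subspace for the one-dimensional space $\mathcal W = \mathbb C\cdot 1$ and the pair $\mathscr M_z = (\mathscr M_{z_1}, \mathscr M_{z_2})$, and then read off each required orthogonality from Proposition~\ref{inner-product-formula}. Since $\mathscr M_{z_1}^{\alpha_1}\mathcal W = \mathbb C\, z_1^{\alpha_1}$ and $\mathscr M_{z_1}^{\beta_1}\mathscr M_{z_2}^{\beta_2}\mathcal W = \mathbb C\, z_1^{\beta_1}z_2^{\beta_2}$, the two defining conditions of wandering reduce to the scalar statements
\begin{align*}
\langle z_1^{\alpha_1}, z_1^{\beta_1}z_2^{\beta_2}\rangle_{\mathcal D(\mu_1,\mu_2)} &= 0 \quad (\beta_2 \neq 0),\\
\langle z_2^{\alpha_2}, z_1^{\beta_1}z_2^{\beta_2}\rangle_{\mathcal D(\mu_1,\mu_2)} &= 0 \quad (\beta_1 \neq 0),
\end{align*}
to be established for all $\alpha_1,\alpha_2,\beta_1,\beta_2 \in \mathbb Z_+$. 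Because the inner product formula of Proposition~\ref{inner-product-formula} is symmetric under interchanging $(z_1,\mu_1)$ with $(z_2,\mu_2)$, it suffices to verify the first family; the second will follow verbatim.

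For the first identity I would write $z_1^{\alpha_1} = z_1^{\alpha_1}z_2^{0}$ and apply Proposition~\ref{inner-product-formula} with $(m,n) = (\alpha_1, 0)$ and $(p,q) = (\beta_1, \beta_2)$. Since $\beta_2 \neq 0$ we are in the regime $n \neq q$, so only the first two cases of the proposition can occur: if $\alpha_1 \neq \beta_1$ the inner product is $0$, while if $\alpha_1 = \beta_1$ it equals $\langle z_2^{0}, z_2^{\beta_2}\rangle_{\mathcal D(\mu_2)} = \langle 1, z_2^{\beta_2}\rangle_{\mathcal D(\mu_2)}$.

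It then remains to observe that $\langle 1, z^{j}\rangle_{\mathcal D(\mu)} = 0$ for every $j \geq 1$ and every $\mu \in M_+(\overline{\mathbb D})$. This is immediate from the one-variable monomial formula of the preceding proposition: taking $i = 0$ makes the sum over $k$ empty, leaving only $\delta(0,j) = 0$. (Equivalently, the constant function has vanishing derivative, so it contributes nothing to the Dirichlet polarization term, and $1 \perp z^{j}$ already holds in $H^2(\mathbb D)$.) Substituting this back completes the first family of orthogonalities, and the second family follows by the symmetry noted above. I do not anticipate a genuine obstacle here; the only point requiring care is the bookkeeping of matching the zero $z_2$-exponent of $z_1^{\alpha_1}$ against the cases of Proposition~\ref{inner-product-formula}. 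One should note that the formula extends harmlessly to a zero exponent, since the corresponding Dirichlet integral in its derivation carries a scalar factor equal to that exponent and hence drops out.
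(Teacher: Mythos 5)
Your proposal is correct and follows exactly the route the paper intends: the paper states this corollary as an immediate consequence of Proposition~\ref{inner-product-formula}, and your argument is precisely that deduction, with the additional (correct) care about zero exponents and the observation via the one-variable monomial formula that $\langle 1, z^{j}\rangle_{\mathcal D(\mu)}=0$ for $j\geq 1$. No gaps.
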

Recall that a commuting pair $T=(T_1,T_2)$ is called a {\it toral $2$-isometry} (see \cite[Eq~(1.1)]{BCG2024}) if it satisfies the equations $I-T^*_iT_i-T^*_jT_j+T^*_iT^*_jT_iT_j=0$ for $i,j=1,2,$ i.e. $\beta_\alpha(T)=0$ for $\alpha\in\{(2,0),(1,1),(0,2)\}.$ For future references we state the following lemma concerning toral $2$-isometry (see \cite[Corollary~3.8]{BCG2024}).
\begin{lemma}\label{toral-cyclic-lemma}
Let the supports of $\mu_1$ and $\mu_2$ be contained in the unit circle. Then the commuting pair $\mathscr M_z$ on $\mathcal D(\mu_1,\mu_2)$ is a cyclic toral $2$-isometry with cyclic vector $1.$
\end{lemma}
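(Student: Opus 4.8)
The plan is to treat the two conclusions of the lemma separately. Cyclicity of $\mathscr M_z$ with cyclic vector $1$ is already in hand, since $z_1^{\alpha_1}z_2^{\alpha_2}\cdot 1=z_1^{\alpha_1}z_2^{\alpha_2}$ and the polynomials are dense by Lemma~\ref{polynomials-are-dense} (the operators being bounded by Lemma~\ref{boundedness of M_z}). Hence the real task is to verify that $\mathscr M_z$ is a toral $2$-isometry, i.e.\ $\beta_\alpha(\mathscr M_z)=0$ for $\alpha\in\{(2,0),(1,1),(0,2)\}$. As each $\beta_\alpha(\mathscr M_z)$ is bounded and the monomials span a dense subspace, it suffices to show that $\langle\beta_\alpha(\mathscr M_z)z_1^mz_2^n, z_1^pz_2^q\rangle=0$ for all monomials, and every inner product occurring can be evaluated by the explicit formula of Proposition~\ref{inner-product-formula}.

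For the defect operator, expanding \eqref{defect-operator} and pairing against $z_1^mz_2^n$ and $z_1^pz_2^q$ gives the four terms $\langle z_1^{m+a}z_2^{n+b}, z_1^{p+a}z_2^{q+b}\rangle$ with $(a,b)\in\{0,1\}^2$ and sign $(-1)^{a}(-1)^{b}$. Since the equality patterns $m=p$ and $n=q$ are preserved under the shifts, Proposition~\ref{inner-product-formula} assigns all four terms to the same case; in each case the summand is a sum of a quantity depending on $a$ alone and one depending on $b$ alone (plus a constant). Because $\sum_{a\in\{0,1\}}(-1)^a=0$, every such piece cancels after summing over $(a,b)$, so $\beta_{(1,1)}(\mathscr M_z)=0$. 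This step uses nothing about the supports; it merely reflects the additive splitting of the Dirichlet integral \eqref{dirichlet-int} into a $z_1$-part and a $z_2$-part, and holds for arbitrary $\mu_1,\mu_2\in M_+(\overline{\mathbb D})$.

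For $\beta_{(2,0)}(\mathscr M_z)=I-2\mathscr M_{z_1}^*\mathscr M_{z_1}+\mathscr M_{z_1}^{*2}\mathscr M_{z_1}^2$ (the case $(0,2)$ being symmetric), the same pairing together with Proposition~\ref{inner-product-formula} collapses the $\mathcal D(\mu_2)$-contributions and the constants exactly as above, reducing the form to $\delta(n,q)$ times the one-variable expression $\langle z_1^m, z_1^p\rangle_{\mathcal D(\mu_1)}-2\langle z_1^{m+1}, z_1^{p+1}\rangle_{\mathcal D(\mu_1)}+\langle z_1^{m+2}, z_1^{p+2}\rangle_{\mathcal D(\mu_1)}$. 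Thus $\beta_{(2,0)}(\mathscr M_z)=0$ on $\mathcal D(\mu_1,\mu_2)$ is equivalent to $\mathscr M_z$ being a $2$-isometry on $\mathcal D(\mu_1)$, and this is the only point where $\supp\mu_1\subseteq\mathbb T$ is used. To close it I would invoke the one-variable local Dirichlet identity $D_\zeta(zg)=|g(\zeta)|^2+D_\zeta(g)$ (the form of \eqref{wf-r}) which, via \eqref{d-zeta-int}, yields $\|zg\|^2_{\mathcal D(\mu_1)}-\|g\|^2_{\mathcal D(\mu_1)}=\int_{\overline{\mathbb D}}|g(\zeta)|^2\,d\mu_1(\zeta)$; applying \eqref{beta-e-j} then gives $\langle\beta_{(2,0)}(\mathscr M_z)g, g\rangle_{\mathcal D(\mu_1)}=\int_{\overline{\mathbb D}}(|\zeta|^2-1)|g(\zeta)|^2\,d\mu_1(\zeta)$, which vanishes precisely because $|\zeta|=1$ on $\supp\mu_1$ (cf.\ Richter~\cite{R1991}).

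Combining the three identities shows $\mathscr M_z$ is a toral $2$-isometry, and together with the cyclicity noted at the outset this proves the lemma. The routine part is the cancellation bookkeeping, which is entirely governed by the product-type formula of Proposition~\ref{inner-product-formula}; the one genuinely substantive point is the last one, namely recognizing that the boundary-support hypothesis is exactly what forces the sign factor $|\zeta|^2-1$ to vanish $\mu_j$-almost everywhere and thereby upgrades the completely hyperexpansive operator $\mathscr M_z$ on $\mathcal D(\mu_j)$ to a $2$-isometry.
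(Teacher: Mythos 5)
Your proof is correct, but it is not the paper's argument: the paper offers no proof at all for this lemma, simply quoting it from \cite[Corollary~3.8]{BCG2024}, since under the hypothesis $\supp\mu_j\subseteq\mathbb T$ the weight $U_{\mu_j}$ is the Poisson integral of $\mu_j$ and $\mathcal D(\mu_1,\mu_2)$ coincides with the harmonic-weight Dirichlet space of \cite{BCG2024}, where the toral $2$-isometry property was already established. You instead give a self-contained derivation inside the present paper: cyclicity from Lemmas~\ref{boundedness of M_z} and \ref{polynomials-are-dense}, and the identities $\beta_\alpha(\mathscr M_z)=0$, $\alpha\in\{(2,0),(1,1),(0,2)\}$, checked on monomials via Proposition~\ref{inner-product-formula} (which precedes the lemma and does not depend on it, so there is no circularity), with the support hypothesis entering only through the one-variable factor $\int_{\overline{\mathbb D}}(|\zeta|^2-1)|g(\zeta)|^2\,d\mu_1(\zeta)$. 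Your route buys something the citation does not: it shows $\beta_{(1,1)}(\mathscr M_z)=0$ for \emph{arbitrary} $\mu_1,\mu_2\in M_+(\overline{\mathbb D})$, a fact the paper proves separately in Lemma~\ref{toral-concave} by a harder integral argument whose proof itself invokes Lemma~\ref{toral-cyclic-lemma}; so your computation could streamline that part of the paper as well. Two small points to tighten: Proposition~\ref{inner-product-formula} is stated for $m,n,p,q\in\mathbb N$, and you need it for exponents equal to $0$ — this is harmless, as its proof (and the paper's own use of it in Theorem~\ref{model-theorem}) covers that case; and since $\mu_1$ lives on $\mathbb T$, the local Dirichlet identity $D_\zeta(zg)=|g(\zeta)|^2+D_\zeta(g)$ you need is the \emph{boundary} version $\zeta\in\mathbb T$, which is not literally \eqref{wf-r} (proved there for interior $\zeta$) but is standard from \cite{RS1991}, consistent with your citation of \cite{R1991} for the one-variable $2$-isometry statement.
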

Here is a noteworthy observation regarding the commuting pair $\mathscr M_{z}$ on $\mathcal D(\mu_1,\mu_2).$
\begin{lemma}\label{toral-concave}
Let $\mu_1, \mu_2 \in M_+(\overline{\mathbb D}).$ Then  $\mathscr M_z=(\mathscr M_{z_1}, \mathscr M_{z_2})$ is a  toral completely hyperexpansive $2$-tuple with zero defect operator on $\mathcal D(\mu_1,\mu_2).$
\end{lemma}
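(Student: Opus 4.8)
The plan is to verify the two defining conditions separately and then combine them through the observation, recorded in the introduction, that a commuting pair of completely hyperexpansive operators with vanishing defect operator is automatically toral completely hyperexpansive. Concretely, I would prove (i) $\beta_{(1,1)}(\mathscr M_z)=0$, and (ii) that each coordinate operator $\mathscr M_{z_1},\mathscr M_{z_2}$ is completely hyperexpansive on $\mathcal D(\mu_1,\mu_2)$, i.e. $\beta_{(k,0)}(\mathscr M_z)\le 0$ and $\beta_{(0,k)}(\mathscr M_z)\le 0$ for all $k\ge 1$. Granting (i) and (ii), the recursion \eqref{beta-e-j} forces $\beta_\alpha(\mathscr M_z)=0$ for every $\alpha$ with $\alpha_1\alpha_2\neq 0$, while (ii) handles the pure multi-indices $(k,0)$ and $(0,k)$; this exhausts all $\alpha\in\mathbb Z_+^2\setminus\{0\}$ and yields toral complete hyperexpansivity with zero defect.

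The common engine for both (i) and (ii) is the norm splitting $\|f\|^2=\|f\|^2_{H^2(\mathbb D^2)}+A_1(f)+A_2(f)$, where by \eqref{Dirichlet-int-new}
\[A_1(f)=\sup_{0<r<1}\int_0^{2\pi}\mathcal D_{\mu_1}(f(\cdot,re^{i\theta}))\frac{d\theta}{2\pi},\qquad A_2(f)=\sup_{0<r<1}\int_0^{2\pi}\mathcal D_{\mu_2}(f(re^{i\theta},\cdot))\frac{d\theta}{2\pi}.\]
Each $\beta_\alpha$ quadratic form then decomposes along these three summands, and it suffices to analyze each summand. On $H^2(\mathbb D^2)$ the operators $\mathscr M_{z_1},\mathscr M_{z_2}$ are doubly commuting isometries, so the Hardy summand contributes $0$ to $\beta_{(1,1)}$ and to $\beta_{(k,0)},\beta_{(0,k)}$. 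The heart of the matter is therefore the behaviour of $A_1,A_2$ under multiplication, which I would control using the one-variable identity $\mathcal D_\mu(wg)=\int_{\overline{\mathbb D}}|g(\zeta)|^2\,d\mu(\zeta)+\mathcal D_\mu(g)$; this follows from the local formula \eqref{wf-r} (valid also for $\zeta\in\mathbb T$ via radial boundary values) integrated against $\mu$ using \eqref{d-zeta-int}. Applying this slicewise, and using that multiplication by the constant $re^{i\theta}$ scales $\mathcal D_{\mu_1}$ by $r^2$, I obtain for $f\in\mathcal D(\mu_1,\mu_2)$ the relations $A_1(z_2f)=A_1(f)$, $A_1(z_1f)=P_1(f)+A_1(f)$ and $A_1(z_1z_2f)=P_1(f)+A_1(f)$, where $P_1(f):=A_1(z_1f)-A_1(f)$ is the finite limit of $\int_0^{2\pi}\|f(\cdot,re^{i\theta})\|^2_{L^2(\mu_1)}\frac{d\theta}{2\pi}$; the symmetric relations hold for $A_2$. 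Substituting into $A_1(f)-A_1(z_1f)-A_1(z_2f)+A_1(z_1z_2f)$ gives $0$, and likewise for $A_2$, which proves (i).

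For (ii) I would expand, for $k\ge 1$, $\langle\beta_{(k,0)}(\mathscr M_z)f,f\rangle=\sum_{j=0}^k(-1)^j\binom{k}{j}\|z_1^jf\|^2$ along the same three summands. The Hardy summand is $(1-1)^k\|f\|^2_{H^2(\mathbb D^2)}=0$; the $A_2$ summand equals $\bigl(\sum_j(-1)^j\binom{k}{j}\bigr)A_2(f)=0$, since $A_2(z_1^jf)=A_2(f)$ for every $j$; and the $A_1$ summand equals $\lim_{r\to1}\int_0^{2\pi}\sum_{j=0}^k(-1)^j\binom{k}{j}\mathcal D_{\mu_1}\bigl(w^jf(\cdot,re^{i\theta})\bigr)\frac{d\theta}{2\pi}$. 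For a.e. $\theta$ the slice $f(\cdot,re^{i\theta})$ lies in $\mathcal D(\mu_1)$ by \eqref{f-r-theta-in-d-mu}, and the complete hyperexpansivity of $\mathscr M_w$ on $\mathcal D(\mu_1)$ recalled in the introduction, after cancelling the vanishing $H^2(\mathbb D)$ contribution, gives $\sum_{j=0}^k(-1)^j\binom{k}{j}\mathcal D_{\mu_1}(w^jf(\cdot,re^{i\theta}))\le 0$ pointwise; integrating and passing to the limit keeps the inequality. Hence $\beta_{(k,0)}(\mathscr M_z)\le 0$, and the argument for $\beta_{(0,k)}(\mathscr M_z)$ is symmetric.

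The main obstacle is analytic bookkeeping rather than a conceptual leap: one must justify the slicewise manipulations and, crucially, the interchange of the finite alternating sums with both $\lim_{r\to1^-}$ and the $\theta$-integration. I would handle this via the monotonicity furnished by \cite[Lemma~1.1]{BCG2024} (applied to $\partial_1 f$ against the finite measure $U_{\mu_1}\,dA$, and symmetrically), which converts every relevant supremum into an increasing limit, so that limits of finite sums split termwise and $P_1(f)$ genuinely exists as a limit; boundedness of $\mathscr M_{z_1},\mathscr M_{z_2}$ (Lemma~\ref{boundedness of M_z}) guarantees that all the multiplied functions remain in $\mathcal D(\mu_1,\mu_2)$, so every functional is finite. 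A secondary point requiring care is the extension of the local identity \eqref{wf-r} to boundary points $\zeta\in\mathbb T$ through radial boundary values, so that the full measures $\mu_1,\mu_2$ on $\overline{\mathbb D}$—not merely their restrictions to $\mathbb D$—are accommodated.
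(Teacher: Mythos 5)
Your proposal is correct, and at the level of strategy it matches the paper: both establish $\beta_{(k,0)}(\mathscr M_z)\le 0$, $\beta_{(0,k)}(\mathscr M_z)\le 0$ and $\beta_{(1,1)}(\mathscr M_z)=0$, and then dispose of the mixed indices through the recursion \eqref{beta-e-j}. The technical core, however, is genuinely different. The paper decomposes the norm as $\bigl(\|\cdot\|^2_{H^2(\mathbb D^2)}+B_{\mu_1,\mu_2}\bigr)+I_{\mu_1,\mu_2}$: the first block is precisely the norm of the Dirichlet space built from $\mu_1|_{\mathbb T},\mu_2|_{\mathbb T}$, so its alternating sums vanish by Lemma~\ref{toral-cyclic-lemma} (together with the fact that a $2$-isometry is a $k$-isometry), while the interior block is evaluated exactly with Lemma~\ref{I-zf-estimate} followed by a Pascal-triangle resummation, yielding the closed form \eqref{beta-n-f-final}, namely $\langle\beta_{(n,0)}(\mathscr M_z)f,f\rangle=-\lim_{r\to 1}\int_0^{2\pi}\int_{\mathbb D}(1-|\zeta_1|^2)^{n-1}|f(\zeta_1,re^{i\theta})|^2\,d\mu_1(\zeta_1)\frac{d\theta}{2\pi}$. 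You instead split off only the Hardy part (killed by isometry) and keep the superharmonic terms $A_1,A_2$ whole---interior and boundary together---reducing them slicewise to one-variable inputs: Aleman's complete hyperexpansivity of $\mathscr M_w$ on $\mathcal D(\mu_1)$ for the pure indices, and the product formula $\mathcal D_{\mu}(wg)=\int_{\overline{\mathbb D}}|g|^2\,d\mu+\mathcal D_\mu(g)$ (the boundary-inclusive version of \eqref{wf-r}, obtained by integrating the local identity against $\mu$ via \eqref{d-zeta-int}) together with scaling and monotonicity bookkeeping for the defect. Your route is a more direct reduction to the one-variable model theory: it bypasses Lemma~\ref{toral-cyclic-lemma} and the binomial computation entirely, at the cost of having to justify the local Dirichlet identity at points of $\mathbb T$ through radial limits---the point you rightly flag, and which is standard from \cite{RS1991} and \cite{A1993}, so this is a burden of care rather than a gap. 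What the paper's harder computation buys is the explicit formula \eqref{beta-n-f-final} itself, which is reused later to show that $\mathscr M_z$ is a toral $2$-isometry only if $\mu_1,\mu_2$ are supported on $\partial\mathbb D$; your softer inequality would not serve that later purpose. Your handling of the interchange of finite alternating sums with $\sup_r$ and $\lim_{r\to 1^-}$, resting on the monotonicity of \cite[Lemma~1.1]{BCG2024} and on Lemma~\ref{boundedness of M_z} to keep every term finite, is sound.
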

\begin{proof}
Let $n\geq 2$ and $f\in \mathcal D(\mu_1,\mu_2)$. 
 \begin{align*}
&\inp{\beta_{(n,0)}(\mathscr{M}_{z})(f)}{f}\\
&= \sum_{k=0}^n (-1)^k \binom{n}{k} \|z^k_1f\|^2\\
&= \sum_{k=0}^n (-1)^k \binom{n}{k}\Big( \|z^k_1f\|^2_{H^2(\mathbb D^2)}+ I_{\mu_1,\mu_2}(z^k_1f)+B_{\mu_1,\mu_2}(z^k_1f) \Big) \\
&= \sum_{k=0}^n (-1)^k \binom{n}{k}\Big( \|z^k_1f\|^2_{H^2(\mathbb D^2)}+B_{\mu_1,\mu_2}(z^k_1f) \Big) \\
 &+\sum_{k=0}^n (-1)^k \binom{n}{k} I_{\mu_1,\mu_2}(z^k_1f).
 \end{align*}
By Lemma~\ref{toral-cyclic-lemma} and the fact that every $2$-isometry is automatically a $k$-isometry for each $k \geq 2$ (see \cite{A1990, AS1995}), the first part of the above sum is zero. So we are left with 
\beq\label{beta-n-f}
\inp{\beta_{(n,0)}(\mathscr{M}_{z})(f)}{f}= \sum_{k=0}^n (-1)^k \binom{n}{k} I_{\mu_1,\mu_2}(z^k_1f).
\eeq
Let $k\geq 1.$ Now replacing $f$ by  $z^k_1f$ in \eqref{I-mu-zf} gives us
\begin{align*}
I_{\mu_1,\mu_2}(z^{k+1}_1f)- I_{\mu_1,\mu_2}(z^k_1f)=\lim_{r\to 1}\int_{0}^{2\pi}\int_{\mathbb D} |\zeta_1|^{2k}|f(\zeta_1,re^{i\theta})|^2d\mu_1(\zeta_1)\frac{d\theta}{2\pi}.
\end{align*}
Thus \eqref{beta-n-f} becomes 
\begin{align}\label{beta-n-f-final}
&\inp{\beta_{(n,0)}(\mathscr{M}_{z})(f)}{f} \notag \\
&= \sum_{k=0}^n (-1)^k \binom{n}{k} I_{\mu_1,\mu_2}(z^k_1f) \notag \\
&= \sum_{k=0}^{n-1} (-1)^k \binom{n-1}{k}\left(I_{\mu_1,\mu_2}(z^k_1f)-I_{\mu_1,\mu_2}(z^{k+1}_1f)\right) \notag\\
&=-\lim_{r\to 1}\int_{0}^{2\pi}\int_{\mathbb D} \sum_{k=0}^{n-1} (-1)^k \binom{n-1}{k} |\zeta_1|^{2k}|f(\zeta_1,re^{i\theta})|^2d\mu_1(\zeta_1)\frac{d\theta}{2\pi} \notag\\
&=-\lim_{r\to 1}\int_{0}^{2\pi}\int_{\mathbb D}(1-|\zeta_1|^{2})^{n-1}|f(\zeta_1,re^{i\theta})|^2d\mu_1(\zeta_1)\frac{d\theta}{2\pi}
\leq 0.
\end{align}
Similarly, $\inp{\beta_{(0,n)}(\mathscr{M}_{z})(f)}{f} \leq 0.$ 
We now show that the defect operator $\beta_{(1,1)}(\mathscr M_z)$ of $\mathscr M_{z}$ is zero. If we replace $f$ by  $z_2f$ in \eqref{I-mu-zf},
\begin{align}\label{z1z2-norm}
I_{\mu_1,\mu_2}(z_1z_2f)- I_{\mu_1,\mu_2}(z_2f)&=\lim_{r\to 1}\int_{0}^{2\pi}\int_{\mathbb D}r^2|f(\zeta_1,re^{i\theta})|^2d\mu_1(\zeta_1)\frac{d\theta}{2\pi} \notag\\
&= \lim_{r\to 1}\int_{0}^{2\pi}\int_{\mathbb D}|f(\zeta_1,re^{i\theta})|^2d\mu_1(\zeta_1)\frac{d\theta}{2\pi}.
\end{align}
So applying Lemma~\ref{toral-cyclic-lemma} we have 
\begin{align*}
&\|f\|^2-\|\mathscr{M}_{z_1}f\|^2-\|\mathscr{M}_{z_2}f\|^2+\|\mathscr{M}_{z_1}\mathscr{M}_{z_2}f\|^2 \notag \\
&=\Big( I_{\mu_1,\mu_2}(f) -I_{\mu_1,\mu_2}(z_1f)\Big) +\Big( I_{\mu_1,\mu_2}(z_1z_2f) -I_{\mu_1,\mu_2}(z_2f) \Big) \notag \\
&\overset{\eqref{I-mu-zf} \& \eqref{z1z2-norm}}=0.
\end{align*}
From the discussion after \eqref{beta-alpha} it is clear that $\beta_\alpha(T)\leq 0$ for all $\alpha=(\alpha_1,\alpha_2) \in \mathbb Z_+^2.$ Hence the result. 
\end{proof}

The next lemma is extracted from \cite[Lemma~6.1]{BCG2024} and very useful in the proof of the main theorem.
\begin{lemma}\label{inner-p-simplify}
Let $T=(T_1,T_2)$ is a commuting pair on $\mathcal H$ such that the defect operator (see \eqref{defect-operator}) is zero. Assume that $\ker T^*$ is a wandering subspace of $T.$ Then for each $f_0\in \ker T^*,$ 
\begin{align*}
\inp{T^{m}_1T^n_2f_0}{T^{p}_1T^q_2f_0}_{\mathcal H}= \begin{cases}
0 & \mbox{if}~ m\neq p, ~n \neq q, \\
\inp{T^n_2f_0}{T^q_2f_0}_{\mathcal H} & \mbox{if}~ m=p, ~n \neq q, \\
\inp{T^{m}_1f_0}{T^{p}_1f_0}_{\mathcal H} & \mbox{if}~ m \neq p, ~n = q, \\
\|T^{m}_1f_0\|_{\mathcal H}^2 + \|T^n_2f_0\|_{\mathcal H}^2 - \|f_0\|_{\mathcal H}^2 & \mbox{if}~ m= p, ~n = q.
\end{cases}
\end{align*}  
\end{lemma}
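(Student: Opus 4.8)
The plan is to reduce every two-variable inner product $\langle T_1^mT_2^nf_0,T_1^pT_2^qf_0\rangle$ to one-variable quantities by exploiting two structural consequences of the hypothesis $\beta_{(1,1)}(T)=0$, and then to annihilate all genuinely mixed cross terms using the wandering property of $\ker T^*$. First I would polarize the operator identity $\beta_{(1,1)}(T)=0$ (see \eqref{defect-operator}) to get, for all $f,g\in\mathcal H$, the scalar relation $\langle f,g\rangle-\langle T_1f,T_1g\rangle-\langle T_2f,T_2g\rangle+\langle T_1T_2f,T_1T_2g\rangle=0$. Writing $\phi_1(f,g):=\langle T_1f,T_1g\rangle-\langle f,g\rangle$ and $\phi_2(f,g):=\langle T_2f,T_2g\rangle-\langle f,g\rangle$ and using $T_1T_2=T_2T_1$, this rearranges into the two invariance relations $\phi_1(T_2f,T_2g)=\phi_1(f,g)$ and $\phi_2(T_1f,T_1g)=\phi_2(f,g)$: the first-variable deficiency form is invariant under $T_2$ and the second-variable deficiency form is invariant under $T_1$. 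Iterating gives $\phi_2(T_1^ku,T_1^kv)=\phi_2(u,v)$ for every $k$, and symmetrically for $\phi_1$.

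Set $\mathcal W=\ker T^*$ and $a_{m,n,p,q}:=\langle T_1^mT_2^nf_0,T_1^pT_2^qf_0\rangle$, noting $T_1^aT_2^bf_0\in T_1^aT_2^b\mathcal W$ since $f_0\in\mathcal W$. The wandering hypothesis supplies exactly two orthogonality rules: $\langle T_2^af_0,T_1^bT_2^cf_0\rangle=0$ whenever $b\neq0$, and $\langle T_1^af_0,T_1^bT_2^cf_0\rangle=0$ whenever $c\neq0$. I would then dispatch the four cases by one common telescoping scheme. For the case $m\neq p$, $n=q$ (and symmetrically $m=p$, $n\neq q$): writing $a_{m,n,p,n}=\phi_2(T_1^mT_2^{n-1}f_0,T_1^pT_2^{n-1}f_0)+a_{m,n-1,p,n-1}$ and peeling off $\min(m,p)$ copies of $T_1$ by the $T_1$-invariance of $\phi_2$, the residual $\phi_2$ expands into two inner products of the forbidden shape $\langle T_2^af_0,T_1^{|p-m|}T_2^cf_0\rangle$ with $|p-m|\neq0$, both of which vanish by wandering; induction on $n$ then yields $a_{m,n,p,n}=a_{m,0,p,0}=\langle T_1^mf_0,T_1^pf_0\rangle$. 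The mirror argument, using the $T_2$-invariance of $\phi_1$, gives the $m=p$, $n\neq q$ case.

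For the fully off-diagonal case $m\neq p$, $n\neq q$, the same peeling shows $a_{m,n,p,q}=a_{m,n-1,p,q-1}$ (the $\phi_2$ term again splits into two inner products carrying a nonzero $T_1$-exponent), so I descend until $\min(n,q)=0$ and finish with the other wandering rule, since the surviving term $\langle T_1^mf_0,T_1^pT_2^{|q-n|}f_0\rangle$ has $|q-n|\neq0$. Finally, for $m=p$, $n=q$, the identical telescoping gives $a_{m,n,m,n}-a_{m,n-1,m,n-1}=\phi_2(T_1^mT_2^{n-1}f_0,T_1^mT_2^{n-1}f_0)=\|T_2^nf_0\|^2-\|T_2^{n-1}f_0\|^2$ after peeling all $m$ copies of $T_1$ (here the two arguments coincide, so no cross term appears); summing the telescoping series and using $a_{m,0,m,0}=\|T_1^mf_0\|^2$ produces $\|T_1^mf_0\|^2+\|T_2^nf_0\|^2-\|f_0\|^2$.

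Throughout I would reduce to $m\le p$ (resp.\ $n\le q$) by conjugate symmetry $a_{m,n,p,q}=\overline{a_{p,q,m,n}}$, which is harmless since each claimed value enjoys the same symmetry. The step demanding the most care — and the main obstacle — is the bookkeeping in the peeling argument: one must peel exactly $\min(m,p)$ factors of $T_1$ so that a \emph{pure} power $T_1^{|p-m|}$ survives against which the wandering rule applies, and one must verify that both inner products emerging from each $\phi_2$ (the $T_2$-raised and the unraised one) retain a nonzero $T_1$-exponent. Once this alignment with the wandering relations is arranged, every cross term is killed and all four formulas fall out of purely algebraic telescoping, with no estimate or limiting argument required.
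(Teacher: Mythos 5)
Your proof is correct and follows essentially the same route as the paper: your invariance relations $\phi_2(T_1f,T_1g)=\phi_2(f,g)$ and $\phi_1(T_2f,T_2g)=\phi_1(f,g)$, iterated and telescoped, are exactly the polarized form of the identity $T_1^{*k}T_2^{*l}T_1^kT_2^l=T_1^{*k}T_1^k+T_2^{*l}T_2^l-I$ that the paper imports from the proof of \cite[Lemma~6.1(i)]{BCG2024}, and both arguments then finish by killing the cross terms with the two wandering-subspace orthogonality rules, as in \cite[Lemma~6.1(ii)]{BCG2024}. The only difference is presentational: you rederive the key identity self-containedly via sesquilinear forms, whereas the paper cites it.
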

\begin{proof}
From the proof of \cite[Lemma~6.1(i)]{BCG2024} we get that whenever $I-T_1^*T_1-T_2^*T_2+T_1^*T_2^*T_1T_2=0,$ for each $k,l\geq 0,$ 
\beq\label{k,l-induction}
T_1^{*k}T_2^{*l}T_1^kT_2^l=T_1^{*k}T_1^k+T_2^{*l}T_2^l-I.
\eeq
Since $\ker T^*$  is a wandering subspace, using \eqref{k,l-induction} and following the proof of \cite[Lemma~6.1(ii)]{BCG2024} one recovers the required result.
\end{proof}
\begin{proof}[Proof of Theorem~\ref{model-theorem}]
$(B)\implies (A)$ This follows from Corollaries~\ref{joint-kernel}, \ref{wandering-s} and Lemma~\ref{toral-concave}.\\
$(A)\implies (B)$  As $T$ is analytic on $\mathcal H$ so is $T_1$ and $T_2.$ Fix $j\in \{1,2\}.$ Consider the $T_j$ invariant subspace $\mathcal H_j=\overline{span}\{ T^k_jf_0:k\geq 0\}$ of $\mathcal H.$ Then $T_j|_{\mathcal H_j}$ is cyclic analytic completely hyperexpansive operator i.e. an operator of Dirichlet-type (refer to \cite[Definition~1.2, p.70]{A1993}). Hence, by \cite[Theorem~2.5, p.79]{A1993} there exists a unique measure $\mu_j \in M_+(\overline{\mathbb D})$ and a unitary operator $U_j: \mathcal H_j \to \mathcal D(\mu_j)$ such that 
\beq\label{unitary-u-j}
U_jf_0=1, \quad U_jT_j= \mathscr M_w^{(j)} U_j,
\eeq
where $\mathscr M_w^{(j)}$ denotes the multiplication by coordinate function $w$ on $\mathcal D(\mu_j).$ Now consider the map $U$ as 
\beqn
U(T_1^mT_2^nf_0)=z_1^mz_2^n, \quad m,n\geq 0.
\eeqn
 Here we have $\mathcal H=\overline{span}\{T_1^mT_2^nf_0:m,n\geq 0\}$ and $\mathcal D(\mu_1,\mu_2)=\overline{span}\{z_1^m z_2^n: m,n\geq 0\}.$ 
 For any $m,p\geq 0,$ by \eqref{unitary-u-j}
\beqn
\inp{T_j^mf_0}{T_j^pf_0}_{\mathcal H}&=&\inp{U_jT_j^mf_0}{U_jT_j^pf_0}_{\mathcal D(\mu_j)} \\
&=& \inp{(M_w^{(j)})^m U_jf_0}{(M_w^{(j)})^p U_jf_0}_{\mathcal D(\mu_j)} \\
&=& \inp{w^m}{w^p}_{\mathcal D(\mu_j)}. \\
\eeqn 
Now combining Lemma~\ref{inner-p-simplify} and Proposition~\ref{inner-product-formula} yields  $$\inp{T^{m}_1T^n_2f_0}{T^{p}_1T^q_2f_0}_{\mathcal H}=\inp{z_1^mz_2^n}{z_1^pz_2^q}_{\mathcal D(\mu_1,\mu_2)},\; m,n,p,q\geq 0.$$ So $U$ extends as a unitary from $\mathcal H$ onto $\mathcal D(\mu_1,\mu_2).$ Hence the result.
\end{proof}

\begin{corollary}
The commuting pair $\mathscr M_z=(\mathscr M_{z_1}, \mathscr M_{z_2})$ is a toral $2$-isometry if and only if $\mu_1$ and $\mu_2$ are supported on $\partial \mathbb D.$
\end{corollary}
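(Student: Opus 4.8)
The plan is to reduce the equivalence to the vanishing of the two operators $\beta_{(2,0)}(\mathscr M_z)$ and $\beta_{(0,2)}(\mathscr M_z)$, and then read off the role of the measures from the explicit quadratic-form formula already established in the proof of Lemma~\ref{toral-concave}. The starting observation is that Lemma~\ref{toral-concave} guarantees the defect operator $\beta_{(1,1)}(\mathscr M_z)$ is identically zero on $\mathcal D(\mu_1,\mu_2)$ for every choice of $\mu_1,\mu_2$. Consequently $\mathscr M_z$ is a toral $2$-isometry exactly when $\beta_{(2,0)}(\mathscr M_z)=0$ and $\beta_{(0,2)}(\mathscr M_z)=0$, so the entire question is concentrated on these two operators.

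For the core of the argument I would specialize the identity \eqref{beta-n-f-final} to $n=2$, obtaining
\begin{align*}
\inp{\beta_{(2,0)}(\mathscr M_z)f}{f} = -\lim_{r\to 1}\int_0^{2\pi}\int_{\mathbb D}(1-|\zeta_1|^2)\,|f(\zeta_1,re^{i\theta})|^2\,d\mu_1(\zeta_1)\frac{d\theta}{2\pi},\quad f\in\mathcal D(\mu_1,\mu_2),
\end{align*}
together with its symmetric counterpart for $\beta_{(0,2)}(\mathscr M_z)$ involving $\mu_2$. The crucial structural feature is that the integration takes place against $\mu_1$ restricted to the \emph{open} disc $\mathbb D$, and that the weight $1-|\zeta_1|^2$ is strictly positive there while vanishing on $\partial\mathbb D$.

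The sufficiency direction is then immediate. If the supports of $\mu_1$ and $\mu_2$ lie in $\partial\mathbb D=\mathbb T$, then $\mu_j|_{\mathbb D}=0$, so both integrals above vanish for every $f$, forcing $\beta_{(2,0)}(\mathscr M_z)=\beta_{(0,2)}(\mathscr M_z)=0$; this is in fact exactly the content of Lemma~\ref{toral-cyclic-lemma}, which may be quoted directly. For the necessity direction I would test the vanishing of the quadratic form on the constant function $f=1\in\mathcal D(\mu_1,\mu_2)$, which yields
\begin{align*}
0=\inp{\beta_{(2,0)}(\mathscr M_z)1}{1}=-\int_{\mathbb D}(1-|\zeta_1|^2)\,d\mu_1(\zeta_1),
\end{align*}
and, since the integrand is strictly positive throughout $\mathbb D$ and $\mu_1\ge 0$, this forces $\mu_1|_{\mathbb D}=0$, i.e.\ $\mu_1$ is carried by $\partial\mathbb D$. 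The same test applied to $\beta_{(0,2)}(\mathscr M_z)$ gives $\mu_2|_{\mathbb D}=0$.

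The argument is essentially a direct consequence of the formula from Lemma~\ref{toral-concave}, so there is no serious obstacle; the only point requiring mild care is the passage from the operator statement $\beta_{(2,0)}(\mathscr M_z)=0$ to the scalar conclusion about $\mu_1$. This is unproblematic because the operator being zero means its quadratic form vanishes on all of $\mathcal D(\mu_1,\mu_2)$, and the strict positivity of the weight $1-|\zeta_1|^2$ on $\mathbb D$ makes the single test vector $f=1$ sufficient to recover the full measure-theoretic conclusion $\mu_1|_{\mathbb D}=0$.
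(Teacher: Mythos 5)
Your proposal is correct and follows essentially the same route as the paper: both specialize \eqref{beta-n-f-final} to $n=2$, test the vanishing quadratic form on $f=1$ to force $\mu_j|_{\mathbb D}=0$, and invoke the known result for measures supported on $\mathbb T$ (you cite Lemma~\ref{toral-cyclic-lemma}, the paper cites the equivalent statement from \cite{BCG2024}) for the converse. Your explicit remark that $\beta_{(1,1)}(\mathscr M_z)=0$ always holds, so that only $\beta_{(2,0)}$ and $\beta_{(0,2)}$ are at issue, is a small clarification the paper leaves implicit.
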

\begin{proof}
As $\beta_{(2,0)}(\mathscr M_z)=0$ putting $n=2$ in \eqref{beta-n-f-final} gives
\begin{align*}
0=\inp{\beta_{(2,0)}(\mathscr{M}_{z})(f)}{f}= -\lim_{r\to 1}\int_{0}^{2\pi}\int_{\mathbb D}(1-|\zeta_1|^2)|f(\zeta_1,re^{i\theta})|^2d\mu_1(\zeta_1)\frac{d\theta}{2\pi}.
\end{align*}
By substituting $f=1$ into the equation above, we find that the support of $\mu_1$ lies outside $\mathbb{D}.$ Similarly, $\beta_{(0,2)}(\mathscr M_z)=0$ implies the support of $\mu_2$ is outside $\mathbb D.$ Conversely, if you assume that $\mu_1$ and $\mu_2$ are supported on the unit circle $\mathbb T$ then by  \cite[Theorem~2.4]{BCG2024} $\mathscr M_z=(\mathscr M_{z_1}, \mathscr M_{z_2})$ becomes a toral $2$-isometry. 
\end{proof}
The following theorem is an extended version of \cite[Theorem~6.4]{BCG2024}(cf. \cite[Theorem~5.2]{R1991}).
 \begin{theorem}
 For $i=1,2$ consider $\mu_1^{(i)},\mu_2^{(i)}\in M_+(\overline{\mathbb D}).$ Then the multiplication $2$-tuple $\mathscr M^{(1)}_z$ on $\mathcal D(\mu_1^{(1)},\mu_2^{(1)})$ is unitarily equivalent to $\mathscr M^{(2)}_z$ on $\mathcal D(\mu_1^{(2)},\mu_2^{(2)})$ if and only if $\mu_j^{(1)}=\mu_j^{(2)},$ $j=1,2.$
 \end{theorem}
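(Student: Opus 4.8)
The plan is to prove the two implications separately; the reverse implication is immediate and the forward one carries all the content. For the reverse direction, if $\mu_j^{(1)}=\mu_j^{(2)}$ for $j=1,2$, then the two Dirichlet integrals in \eqref{dirichlet-int} coincide, so $\mathcal D(\mu_1^{(1)},\mu_2^{(1)})$ and $\mathcal D(\mu_1^{(2)},\mu_2^{(2)})$ are literally the same reproducing kernel Hilbert space and the multiplication tuples are identical; the identity map furnishes the unitary equivalence.

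For the forward direction, suppose $U:\mathcal D(\mu_1^{(1)},\mu_2^{(1)})\to\mathcal D(\mu_1^{(2)},\mu_2^{(2)})$ is unitary with $U\mathscr M^{(1)}_{z_j}=\mathscr M^{(2)}_{z_j}U$ for $j=1,2$. First I would pin down the image of the constant function. Taking adjoints gives $U\mathscr M^{(1)*}_{z_j}=\mathscr M^{(2)*}_{z_j}U$, so $U$ carries $\ker\mathscr M^{(1)*}_z$ onto $\ker\mathscr M^{(2)*}_z$. By Corollary~\ref{joint-kernel} (at $w=0$) each joint kernel is one-dimensional, spanned by $\kappa(\cdot,0)=1$, and $\|1\|^2=\|1\|^2_{H^2(\mathbb D^2)}+\mathcal D_{\mu_1,\mu_2}(1)=1$ in either space. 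Hence $U1=c\cdot 1$ with $|c|=1$, and after replacing $U$ by $\bar c\,U$ I may assume $U1=1$. Since $\mathscr M_z$ is cyclic with cyclic vector $1$, the intertwining relations then propagate to all monomials:
\[
U(z_1^m z_2^n)=U\big((\mathscr M^{(1)}_{z_1})^m(\mathscr M^{(1)}_{z_2})^n 1\big)=(\mathscr M^{(2)}_{z_1})^m(\mathscr M^{(2)}_{z_2})^n 1=z_1^m z_2^n,\quad m,n\ge 0.
\]
Thus $U$ fixes every monomial, and unitarity forces the equality of all monomial inner products in the two spaces.

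Next I would reduce to one variable. Restricting the $\mathcal D(\mu_1,\mu_2)$-norm to functions of $z_1$ alone recovers exactly the $\mathcal D(\mu_1)$-norm, directly from Definition~\ref{definition-diri} (the $z_2$-integral vanishes), so $\inp{z_1^m}{z_1^p}_{\mathcal D(\mu_1,\mu_2)}=\inp{z^m}{z^p}_{\mathcal D(\mu_1)}$; this is also the $n=q=0$ case of Proposition~\ref{inner-product-formula}. Consequently $\inp{z^m}{z^p}_{\mathcal D(\mu_1^{(1)})}=\inp{z^m}{z^p}_{\mathcal D(\mu_1^{(2)})}$ for all $m,p\ge 0$, and symmetrically for $\mu_2$. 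It then remains to show these one-variable monomial inner products determine the measure. I would invoke the moment formula proved above, $\inp{z^m}{z^p}_{\mathcal D(\mu)}=\delta(m,p)+\sum_{k=0}^{m-1}\hat\mu\{p-k-1,m-k-1\}$ for $m\le p$, and invert the resulting triangular system: consecutive differences of the diagonal values $\|z^m\|^2$ recover the moments $\hat\mu\{l,l\}$, while the off-diagonal values with $m<p$, processed in increasing order of $m$, recover $\hat\mu\{a,b\}$ for $a>b$ (their conjugates giving $a<b$). Hence $\mu_1^{(1)}$ and $\mu_1^{(2)}$ have identical moments $\int_{\overline{\mathbb D}}\bar\zeta^a\zeta^b\,d\mu(\zeta)$. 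Since the functions $\{\bar\zeta^a\zeta^b\}$ span a dense subalgebra of $C(\overline{\mathbb D})$ by Stone--Weierstrass, finite positive Borel measures on the compact set $\overline{\mathbb D}$ are determined by these moments, so $\mu_1^{(1)}=\mu_1^{(2)}$, and likewise $\mu_2^{(1)}=\mu_2^{(2)}$.

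The hard part is the first step, identifying $U1$ as a unimodular multiple of $1$: everything downstream is forced once $U$ is known to respect the cyclic vector, and the one-dimensionality of the joint kernel from Corollary~\ref{joint-kernel} is exactly what guarantees that the intertwining pins $U$ to the monomial basis. The moment inversion and the passage from equal moments to equal measures are routine.
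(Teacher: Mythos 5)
Your proof is correct, and its first half coincides with the paper's: both use Corollary~\ref{joint-kernel} to see that the intertwining unitary $U$ maps the joint kernel (spanned by the constant $1$) onto the joint kernel, normalize so that $U1=1$, and conclude that $U$ fixes every monomial, whence all monomial inner products in the two spaces agree. The arguments part ways at the extraction of the measures. The paper first proves the identity \eqref{z_1-poly}, namely $\|z_1p\|^2=\|p\|^2+\int_{0}^{2\pi}\int_{\overline{\mathbb D}}|p(\zeta_1,e^{i\theta})|^2\,d\nu_1(\zeta_1)\,\frac{d\theta}{2\pi}$ for \emph{every} two-variable polynomial $p$ (the interior part coming from Lemma~\ref{I-zf-estimate}, the boundary part from \cite[Lemma~3.5]{BCG2024}), then specializes to one-variable polynomials and polarizes to obtain equality of all moments. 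You instead stay entirely with monomials: you observe that functions of $z_1$ alone have the same norm in $\mathcal D(\mu_1,\mu_2)$ as in $\mathcal D(\mu_1)$, so the one-variable inner products $\langle z^m,z^p\rangle_{\mathcal D(\mu_1^{(i)})}$ must agree, and you then recover the moments $\hat{\mu}\{a,b\}$ one at a time by inverting the triangular system furnished by the moment formula $\langle z^i,z^j\rangle_{\mathcal D(\mu)}=\delta(i,j)+\sum_{k=0}^{i-1}\hat{\mu}\{j-k-1,i-k-1\}$ (the first proposition of Section~\ref{representaion-theorem}); your difference computations there are accurate. Both routes terminate in the same fact --- finite positive Borel measures on the compact set $\overline{\mathbb D}$ are determined by their moments --- which the paper outsources to Atzmon \cite{A1975} and you justify directly by Stone--Weierstrass plus Riesz representation; either justification is sound. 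What each buys: your route is more self-contained, using only the inner-product formulas already established in Section~\ref{representaion-theorem} and avoiding both Lemma~\ref{I-zf-estimate} and the boundary-term lemma imported from \cite{BCG2024}, at the cost of some inversion bookkeeping; the paper's route produces the reusable identity \eqref{z_1-poly}, which handles all polynomials at once and makes the polarization step immediate. You also record the trivial converse direction explicitly, which the paper leaves implicit.
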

 \begin{proof}
Let $\nu_1,\nu_2\in M_+(\overline{\mathbb D})$ and $p$ be a two variable polynomial. We have $\mathcal D_{\nu_1,\nu_2}(z_1p)=I_{\nu_1,\nu_2}(z_1p)+B_{\nu_1,\nu_2}(z_1p)$. 
By \eqref{I-mu-zf},
\beq\label{eq-I}
I_{\nu_1,\nu_2}(z_1p)=\int_{0}^{2\pi}\int_{\mathbb D}|p(\zeta_1,e^{i\theta})|^2d\nu_1(\zeta_1)\frac{d\theta}{2\pi} + I_{\nu_1,\nu_2}(p).
\eeq
From \cite[Lemma~3.5]{BCG2024} we get that 
\beq \label{eq-B}
B_{\nu_1,\nu_2}(z_1p)=\int_{0}^{2\pi}\int_{\mathbb T}|p(\zeta_1,e^{i\theta})|^2d\nu_1(\zeta_1)\frac{d\theta}{2\pi} + B_{\nu_1,\nu_2}(p).
\eeq
Now combing \eqref{eq-I} and \eqref{eq-B} together with \eqref{norm-on-d-mu12} gives
\beq\label{z_1-poly}
\|z_1p\|^2=\|p\|^2+\int_{0}^{2\pi}\int_{\overline{\mathbb D}}|p(\zeta_1,e^{i\theta})|^2d\nu_1(\zeta_1)\frac{d\theta}{2\pi}.
\eeq
One can get a similar expression for $z_2p.$ 
Let $U$ be a unitary map from $\mathcal D(\mu_1^{(1)},\mu_2^{(1)})$ onto $\mathcal D(\mu_1^{(2)},\mu_2^{(2)})$ which satisfies
\beq\label{unitary-eqvl}
U\mathscr M^{(1)}_{z_j}=\mathscr M^{(2)}_{z_j}U,\;j=1,2.
\eeq
Since the joint kernels $\ker \mathscr M^{(1)*}_z$ and $\ker \mathscr M^{(2)*}_z$ are spanned by the constant function $1$ (see Crorllary~\ref{joint-kernel}) so \eqref{unitary-eqvl} suggests that $U^*1\in \ker \mathscr M^{(1)*}_z.$ Hence $U^*1$ must be a unimodular constant. By multiplying suitable unimodular constant one can assume that $U1=1.$ It now follows from \eqref{unitary-eqvl} that $U$ is identity on the polynomials. Thus \eqref{z_1-poly} suggests that for any two-variable polynomial $p,$
\beqn
\int_{0}^{2\pi}\int_{\overline{\mathbb D}}|p(\zeta_1,e^{i\theta})|^2d\mu^{(1)}_1(\zeta_1)\frac{d\theta}{2\pi}
&=& \int_{0}^{2\pi}\int_{\overline{\mathbb D}}|p(\zeta_1,e^{i\theta})|^2d\mu^{(2)}_1(\zeta_1)\frac{d\theta}{2\pi}, \\
\int_{0}^{2\pi}\int_{\overline{\mathbb D}}|p(e^{i\theta},\zeta_2)|^2d\mu^{(1)}_2(\zeta_2)\frac{d\theta}{2\pi}
&=& \int_{0}^{2\pi}\int_{\overline{\mathbb D}}|p(e^{i\theta},\zeta_2)|^2d\mu^{(2)}_2(\zeta_2)\frac{d\theta}{2\pi}.
\eeqn
Thus for any one variable polynomial $p,$
\beqn
\int_{\overline{\mathbb D}} |p(\zeta)|^2 d\mu^{(1)}_j(\zeta) 
=\int_{\overline{\mathbb D}} |p(\zeta)|^2 d\mu^{(2)}_j(\zeta), \quad j=1, 2. 
\eeqn
Using polarization identity and the uniqueness of the two-variable moment problem on $\overline{\mathbb D}$ (see \cite[Remark~1, p.~321]{A1975}) we conclude the theorem.
 \end{proof}

\medskip \textit{Acknowledgement}.  The author extends his gratitude to Prof. Sameer Chavan and Prof. Soumitra Ghara for their valuable suggestions on the subject of this article.


\begin{thebibliography}{100}
\bibitem{A1990}
J. Agler, A disconjugacy theorem for Toeplitz operators, {\it Amer. J. Math.} \textbf{112} (1990), 1-14.

\bibitem{AS1995} J. Agler, M. Stankus, $m$-isometric transformations of Hilbert space. I, {\it Integral Equations Operator Theory} {\bf 21} (1995), no.4, 383-429.

\bibitem{AW1990} E. Albrecht, P. M. Wirtz, Multicyclic systems of commuting operators, Linear operators in function spaces (Timişoara, 1988), 39–61,{\it Oper. Theory Adv. Appl.} 43, Birkhäuser, Basel, 1990.

\bibitem{A1993} A. Aleman, {\it The Multiplication Operator on Hilbert Spaces of Analytic Functions}, Habilitationsschrift, Hagen (1993).


\bibitem{A1996} A. Athavale, On completely hyperexpansive operators, {\it Proc. Amer. Math. Soc.} {\bf 124} (1996), no.12, 3745-3752.

\bibitem{AS1999} A. Athavale,V. M. Sholapurkar, Completely hyperexpansive operator tuples, {\it Positivity} {\bf 3} (1999), no.3, 245-257.

\bibitem{A1975} A. Atzmon, A moment problem for positive measures on the unit disc, {\it Pacific J. Math.} {\bf 59} (1975), pp. 317-325.

\bibitem{BCG2024} S. Bera, S. Chavan, S. Ghara, Dirichlet-type spaces of the unit bidisc and toral $2$-isometries, {\it Canadian Journal of Mathematics.} Published online 2024:1-23. doi:10.4153/S0008414X24000300

\bibitem{BGV2024} M. Bhattacharjee, R. Gupta, V. Venugopal, Dirichlet type spaces in the unit bidisc and Wandering Subspace Property for operator tuples, \url{https://doi.org/10.48550/arXiv.2406.16541}, 2024.


\bibitem{BEKS2017} M. Bhattacharjee, J. Eschmeier, D. K. Keshari, J. Sarkar, Dilations, wandering subspaces, and inner
functions, {\it Linear Algebra Appl.} {\bf 523} (2017), 263-280.

\bibitem{D1951} A. Devinatz, Two parameter moment problems.
{\it Duke Math. J.} {\bf 24} (1957), 481-498.



\bibitem{EKMR2014} O. El-Fallah, K. Kellay, J. Mashreghi, T. Ransford, {\it  A primer on the Dirichlet space}, Cambridge Tracts in Mathematics, 203, Cambridge University Press, Cambridge, 2014, xiv+211 pp.

\bibitem{EKKMR2016} O. El-Fallah, K. Kellay, H. Klaja, J. Mashreghi, T. Ransford,  Dirichlet spaces with superharmonic weights and de Branges–Rovnyak spaces, {\it Complex Anal. Oper. Theory} {\bf 10} (2016), no.1, 97-107.


\bibitem{EL2018} J. Eschmeier, S. Langendörfer, Multivariable Bergman shifts and Wold decompositions, {\it Integral Equations Operator Theory} {\bf 90} (2018), Paper No. 56, 17 pp.

\bibitem{GNS2018} B. Guanlong, G. Nihat Gökhan, P. Stamatis, On Dirichlet spaces with a class of superharmonic weights, {\it Canad. J. Math.}  {\bf 70} (2018), no. 4, 721-741.

\bibitem{H1961} P. R. Halmos, Shifts on Hilbert spaces, J. Reine Angew. Math. 208 (1961), 102-112.

\bibitem{H1935} E. K. Haviland, On the Momentum Problem for Distribution Functions in More than One Dimension, {\it Amer. J. Math.} {\bf 57} (1935), no. 3, 562-568.

\bibitem{M1988} M. Putinar, A two-dimensional moment problem, {\it J. Funct. Anal.} {\bf 80} (1988), no. 1, 1-8.

\bibitem{R1995} T. Ransford, {\it Potential theory in the complex plane}, London Math. Soc. Stud. Texts, 28
Cambridge University Press, Cambridge, 1995. x+232 pp.

\bibitem{R1987} S. Richter, Invariant subspaces in Banach spaces of analytic functions, {\it Trans. Amer. Math. Soc.} {\bf 304} (1987), 585-616.

\bibitem{R1988} S. Richter, Invariant subspaces of the Dirichlet shift, {\it J. Reine Angew. Math.} {\bf 386} (1988), 205–220.

\bibitem{RS1991} S. Richter, C. Sundberg, A formula for the local Dirichlet integral, {\it Michigan Math. J.} {\bf 38} (1991), no. 3, 355-379.

\bibitem{R1991} S. Richter, A representation theorem for cyclic analytic two-isometries, {\it Trans. Am. Math. Soc.} {\bf 328} (1991), 325-349.

\bibitem{Ro1988}  H. L. Royden, {\it Real analysis}, The Macmillan Company, New York; Collier Macmillan Ltd., London, 1963. xvi+284 pp.



\bibitem{Ru1969} W. Rudin, {\it Function theory in polydiscs}, W. A. Benjamin, Inc., New York-Amsterdam, 1969. vii+188 pp.





\bibitem{S2025} P. Stamatis, Weighted Dirichlet spaces that are de Branges-Rovnyak spaces with equivalent norms, {\it J. Funct. Anal.}  {\bf 288} (2025), no. 3, Paper No. 110717.
\end{thebibliography}
\end{document}